\newtheorem{thm}{Theorem}[section]
\newtheorem{lem}[thm]{Lemma}
\newtheorem{prop}[thm]{Proposition}
\theoremstyle{definition}
\newtheorem{defin}[thm]{Definition}
\newtheorem{rem}[thm]{Remark}
\numberwithin{equation}{section}
\begin{document}


\baselineskip=17pt



\title[Weighted Morrey spaces related to Schr\"odinger operators]{Weighted Morrey spaces related to Schr\"odinger operators with potentials satisfying a reverse H\"older inequality and fractional integrals}

\author[H. Wang]{Hua Wang}
\address{College of Mathematics and Econometrics, Hunan University, Changsha, 410082, P. R. China\\
\&~Department of Mathematics and Statistics, Memorial University, St. John's, NL A1C 5S7, Canada}
\email{wanghua@pku.edu.cn}
\date{}

\begin{abstract}
Let $\mathcal L=-\Delta+V$ be a Schr\"odinger operator on $\mathbb R^d$, $d\geq3$, where $\Delta$ is the Laplacian operator on $\mathbb R^d$ and the nonnegative potential $V$ belongs to the reverse H\"older class $RH_s$ for $s\geq d/2$. For given $0<\alpha<d$, the fractional integrals associated to the Schr\"odinger operator $\mathcal L$ is defined by $\mathcal I_{\alpha}={\mathcal L}^{-{\alpha}/2}$.Suppose that $b$ is a locally integrable function on $\mathbb R^d$, the commutator generated by $b$ and $\mathcal I_{\alpha}$ is defined by $[b,\mathcal I_{\alpha}]f(x)=b(x)\cdot \mathcal I_{\alpha}f(x)-\mathcal I_{\alpha}(bf)(x)$. In this paper, we first introduce some kinds of weighted Morrey spaces related to certain nonnegative potentials belonging to the reverse H\"older class $RH_s$ for $s\geq d/2$. Then we will establish the boundedness properties of the fractional integrals $\mathcal I_{\alpha}$ on these new spaces. Furthermore, weighted strong-type estimate for the corresponding commutator $[b,\mathcal I_{\alpha}]$ in the framework of Morrey spaces is also obtained. The classes of weights, the classes of symbol functions as well as weighted Morrey spaces discussed in this paper are larger than $A_{p,q}$, $\mathrm{BMO}(\mathbb R^d)$ and $L^{p,\kappa}(\mu,\nu)$ corresponding to the classical case (that is $V\equiv0$).
\end{abstract}
\subjclass[2010]{Primary 42B20; 35J10; Secondary 46E30; 47B47}
\keywords{Schr\"odinger operators; fractional integrals; commutators; weighted Morrey spaces; $A^{\rho,\infty}_{p,q}$ weights; $\mathrm{BMO}_{\rho,\infty}(\mathbb R^d)$}

\maketitle

\section{Introduction}

\subsection{The critical radius function $\rho(x)$}
Let $d\geq3$ be a positive integer and $\mathbb R^d$ be the $d$-dimensional Euclidean space, and let $V:\mathbb R^d\rightarrow\mathbb R$, $d\geq3$, be a nonnegative locally integrable function that belongs to the \emph{reverse H\"older class} $RH_s$ for some exponent $1<s<\infty$; i.e., there exists a positive constant $C>0$ such that the following reverse H\"older inequality
\begin{equation*}
\left(\frac{1}{|B|}\int_B V(y)^s\,dy\right)^{1/s}\leq C\left(\frac{1}{|B|}\int_B V(y)\,dy\right)
\end{equation*}
holds for every ball $B$ in $\mathbb R^d$. For given $V\in RH_s$ with $s\geq d/2$, we introduce the \emph{critical radius function} $\rho(x)=\rho(x;V)$ which is given by
\begin{equation}\label{rho}
\rho(x):=\sup\bigg\{r>0:\frac{1}{r^{d-2}}\int_{B(x,r)}V(y)\,dy\leq1\bigg\},\quad x\in\mathbb R^d,
\end{equation}
where $B(x,r)$ denotes the open ball centered at $x$ and with radius $r$. It is well known that this auxiliary function satisfies $0<\rho(x)<\infty$ for any $x\in\mathbb R^d$ under the above condition on $V$ (see \cite{shen}). We need the following known result concerning the critical radius function \eqref{rho}.
\begin{lem}[\cite{shen}]\label{N0}
If $V\in RH_s$ with $s\geq d/2$, then there exist two constants $C_0\geq 1$ and $N_0>0$ such that for all $x$ and $y$ in $\mathbb R^d$,
\begin{equation}\label{com}
\frac{\,1\,}{C_0}\left(1+\frac{|x-y|}{\rho(x)}\right)^{-N_0}\leq\frac{\rho(y)}{\rho(x)}\leq C_0\left(1+\frac{|x-y|}{\rho(x)}\right)^{\frac{N_0}{N_0+1}}.
\end{equation}
As a straightforward consequence of \eqref{com}, we can see that for each integer $k\geq1$, the following estimate
\begin{equation}\label{com2}
1+\frac{2^kr}{\rho(y)}\geq \frac{1}{C_0}\left(1+\frac{r}{\rho(x)}\right)^{-\frac{N_0}{N_0+1}}\left(1+\frac{2^kr}{\rho(x)}\right)
\end{equation}
is valid for any $y\in B(x,r)$ with $x\in\mathbb R^d$ and $r>0$, $C_0$ is defined in \eqref{com}.
\end{lem}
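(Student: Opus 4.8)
The first inequality \eqref{com} is precisely the two-sided comparison established by Shen, so I would simply cite \cite{shen} for it and concentrate all the work on deriving \eqref{com2} from \eqref{com}.

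The plan for \eqref{com2} is as follows. Fix $x\in\mathbb R^d$, $r>0$, an integer $k\geq1$, and a point $y\in B(x,r)$, so that $|x-y|<r$. First I would invoke the right-hand inequality in \eqref{com}, together with the monotonicity of $t\mapsto(1+t)^{N_0/(N_0+1)}$ and the bound $|x-y|<r$, to obtain
\[
\rho(y)\leq C_0\,\rho(x)\left(1+\frac{|x-y|}{\rho(x)}\right)^{\frac{N_0}{N_0+1}}\leq C_0\,\rho(x)\left(1+\frac{r}{\rho(x)}\right)^{\frac{N_0}{N_0+1}},
\]
and hence
\[
\frac{1}{\rho(y)}\geq\frac{1}{C_0\,\rho(x)}\left(1+\frac{r}{\rho(x)}\right)^{-\frac{N_0}{N_0+1}}.
\]
Next I would split the right-hand side of \eqref{com2} additively as
\[
\frac{1}{C_0}\left(1+\frac{r}{\rho(x)}\right)^{-\frac{N_0}{N_0+1}}\left(1+\frac{2^kr}{\rho(x)}\right)=\frac{1}{C_0}\left(1+\frac{r}{\rho(x)}\right)^{-\frac{N_0}{N_0+1}}+\frac{1}{C_0}\left(1+\frac{r}{\rho(x)}\right)^{-\frac{N_0}{N_0+1}}\cdot\frac{2^kr}{\rho(x)}
\]
and bound the two pieces separately. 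For the first piece, since $C_0\geq1$, $N_0/(N_0+1)>0$ and $1+r/\rho(x)\geq1$, one has $\frac{1}{C_0}(1+r/\rho(x))^{-N_0/(N_0+1)}\leq1$. For the second piece, multiplying the displayed lower bound for $1/\rho(y)$ by $2^kr$ gives $\frac{1}{C_0}(1+r/\rho(x))^{-N_0/(N_0+1)}\cdot\frac{2^kr}{\rho(x)}\leq\frac{2^kr}{\rho(y)}$. Adding the two estimates yields that the right-hand side of \eqref{com2} is at most $1+\frac{2^kr}{\rho(y)}$, which is exactly \eqref{com2}.

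Since every step here is an elementary manipulation once \eqref{com} is available, I do not expect any real obstacle; the only points needing a little care are the correct bookkeeping of the exponent $N_0/(N_0+1)$ and of the constant $C_0$ when the product is split, and the observation that the ``$1+$'' on the left of \eqref{com2} is precisely what absorbs the leftover first piece. The genuinely nontrivial input — the scaling comparison \eqref{com} for the critical radius function — is borrowed wholesale from \cite{shen}.
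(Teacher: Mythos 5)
Your proposal is correct and takes essentially the same route as the paper: \eqref{com} is quoted from Shen without proof, and \eqref{com2} is treated as an immediate consequence, which your additive splitting of the right-hand side (bounding the constant term by $1$ and the term containing $2^kr/\rho(x)$ by $2^kr/\rho(y)$ via the upper bound in \eqref{com}) supplies correctly. No gaps.
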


\subsection{Fractional integrals associated to Schr\"odinger operators}
Let $V\in RH_s$ for $s\geq d/2$. For such a potential $V$, we consider the \emph{Schr\"odinger differential operator} on $\mathbb R^d$, $d\geq3$,
\begin{equation*}
\mathcal L:=-\Delta+V,
\end{equation*}
and its associated semigroup
\begin{equation*}
\mathcal T_tf(x):=e^{-t\mathcal L}f(x)=\int_{\mathbb R^d}p_t(x,y)f(y)\,dy,\quad t>0,
\end{equation*}
where $p_t(x,y)$ denotes the kernel of the operator $e^{-t\mathcal L},t>0$. From the \emph{Feynman-Kac formula}, it is well-known that
\begin{equation}\label{heat}
\big|p_t(x,y)\big|\leq h_t(x-y),\quad t>0,
\end{equation}
where $h_t$ is the classical heat kernel; i.e.,
\begin{equation*}
h_t(x-y):=(4\pi t)^{-d/2}\exp\Big(-\frac{|x-y|^2}{4t}\Big).
\end{equation*}
Moreover, this estimate \eqref{heat} can be improved when $V$ belongs to the reverse H\"older class $RH_s$ for some $s\geq d/2$ (see \cite{bong1} and \cite{dziu}, for instance). The auxiliary function $\rho(x)$ arises naturally in this context.
\begin{prop}\label{k}
Let $V\in RH_s$ with $s\geq d/2$. For every positive integer $N\geq1$, there exists a positive constant $C_N>0$ such that for all $x$ and $y$ in $\mathbb R^d$,
\begin{equation*}
\big|p_t(x,y)\big|\leq C_N\cdot t^{-d/2}\exp\bigg(-\frac{|x-y|^2}{5t}\bigg)\bigg(1+\frac{\sqrt{t\,}}{\rho(x)}+\frac{\sqrt{t\,}}{\rho(y)}\bigg)^{-N},\quad t>0.
\end{equation*}
\end{prop}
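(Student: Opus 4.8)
The plan is to read the estimate off the Feynman--Kac representation of $e^{-t\mathcal L}$ in four moves: an exponential--moment bound for Brownian motion tested against $V$, its transfer to the on--diagonal kernel, a Cauchy--Schwarz step to pass off--diagonal, and an interpolation with the Gaussian bound \eqref{heat} to restore the exponential factor. By the Feynman--Kac formula, for every $t>0$,
\[
p_t(x,y)=h_t(x-y)\,\mathbf{E}^{(t)}_{x,y}\!\left[\exp\!\left(-\int_0^t V(\omega_s)\,ds\right)\right],
\]
$h_t$ being the classical heat kernel and $\mathbf{E}^{(t)}_{x,y}$ expectation over the Brownian bridge $\omega=(\omega_s)_{0\le s\le t}$ from $x$ to $y$; since $V\ge0$ the bracket lies in $(0,1]$, which already yields $0\le p_t(x,y)\le h_t(x-y)$, i.e.\ \eqref{heat}. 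The crux is the first move: for every $M\ge1$ there is $C_M>0$ with
\[
\mathbf{E}_x\!\left[\exp\!\left(-\int_0^{T}V(b_s)\,ds\right)\right]\le C_M\Big(1+\frac{\sqrt T}{\rho(x)}\Big)^{-M},\qquad T>0,
\]
$(b_s)_{s\ge0}$ being Brownian motion started at $x$.

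For this one exploits the definition \eqref{rho}. Since $d\ge3$, $\int_0^{S}h_s(x-z)\,ds\sim|x-z|^{2-d}$ for $|x-z|\lesssim\sqrt S$, so
\[
\mathbf{E}_x\!\left[\int_0^{\rho(x)^2}V(b_s)\,ds\right]=\int_0^{\rho(x)^2}\!\!\int_{\mathbb R^d}h_s(x-z)V(z)\,dz\,ds\;\sim\;\frac{1}{\rho(x)^{d-2}}\int_{B(x,\rho(x))}V(z)\,dz\;\sim\;1,
\]
the last comparison combining the definition of $\rho(x)$ with the $A_\infty$ (doubling) property of $V\,dz$ implied by $V\in RH_s$; the matching upper bound follows by summing dyadically the scale estimate $\rho(x)^{2-d}\int_{B(x,r)}V\lesssim(r/\rho(x))^{2-d/s}$ for $r\le\rho(x)$, where Gehring's self--improvement of $RH_s$ lets us assume $s>d/2$ so that $2-d/s>0$. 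Hence there is a universal $\theta_0\sim\rho(x)^2$ with $\mathbf{E}_x[\int_0^{\theta_0}V(b_s)\,ds]\le\tfrac12$, and Khasminskii's lemma gives $\mathbf{E}_x[\exp(-\int_0^{\theta_0}V(b_s)\,ds)]\le\eta<1$. One then chops $[0,T]$ into consecutive windows, the $(j+1)$st of length $\sim\rho(b_{\sigma_j})^2$ where $\sigma_j$ starts that window, and iterates the strong Markov property, so that the expectation is $\le\mathbf{E}_x[\eta^{N(T)}]$ with $N(T)$ the number of complete windows before time $T$. The variation of $\rho$ along the path is handled by restricting to $\{\sup_{s\le T}|b_s-x|\le A\sqrt T\}$, whose complement has probability $\le Ce^{-cA^2}$ uniformly in $V$: on that event \eqref{com} (whose exponent $N_0/(N_0+1)$ is $<1$) bounds every window length by $\lesssim\rho(x)^2(A\sqrt T/\rho(x))^{2N_0/(N_0+1)}$, hence $N(T)\gtrsim A^{-2N_0/(N_0+1)}(\sqrt T/\rho(x))^{2/(N_0+1)}$; taking $A=(\sqrt T/\rho(x))^{1/(2N_0)}$ makes both $\eta^{N(T)}$ and $Ce^{-cA^2}$ decay in $\sqrt T/\rho(x)$ faster than any power, which is the displayed moment bound.

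The remaining moves are routine. The bridge from $x$ back to $x$, restricted to $[0,t/2]$, has Radon--Nikodym density with respect to Brownian motion from $x$ bounded by $\sup_z h_{t/2}(z-x)/h_t(0)=2^{d/2}$ (no exponential factor, since $h_t(0)$ has none), so by Feynman--Kac and the moment bound $p_t(x,x)=h_t(0)\,\mathbf{E}^{(t)}_{x,x}[\exp(-\int_0^t V(\omega_s)\,ds)]\le 2^{d/2}h_t(0)\,\mathbf{E}_x[\exp(-\int_0^{t/2}V(b_s)\,ds)]\le C_M\,t^{-d/2}(1+\sqrt t/\rho(x))^{-M}$ (for $\sqrt t\le\rho(x)$ this is simply $h_t(0)\le(4\pi t)^{-d/2}$). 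Since $p_t$ is a symmetric kernel and $\int_{\mathbb R^d}p_{t/2}(x,z)^2\,dz=p_t(x,x)$, Cauchy--Schwarz in $p_t(x,y)=\int_{\mathbb R^d}p_{t/2}(x,z)p_{t/2}(z,y)\,dz$ gives $p_t(x,y)\le\sqrt{p_t(x,x)p_t(y,y)}\le C_M\,t^{-d/2}(1+\sqrt t/\rho(x))^{-M}(1+\sqrt t/\rho(y))^{-M}$. Finally, the weighted geometric mean with weights $\tfrac45$ and $\tfrac15$ of the Gaussian bound $p_t(x,y)\le(4\pi t)^{-d/2}e^{-|x-y|^2/(4t)}$ and of the previous estimate applied with $5M$ in place of $M$ produces precisely $C_M\,t^{-d/2}e^{-|x-y|^2/(5t)}(1+\sqrt t/\rho(x))^{-M}(1+\sqrt t/\rho(y))^{-M}$; taking $M=N$ and using $(1+a)(1+b)\ge1+a+b$ then gives the stated bound. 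The only genuinely hard point in all of this is the window iteration of the second move, where the time scale $\rho(\cdot)^2$ is space--dependent and only H\"older--controlled through \eqref{com}; everything else is standard.
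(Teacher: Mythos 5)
The paper does not actually prove Proposition \ref{k}: it is quoted from the literature (the references \cite{bong1} and \cite{dziu}; the estimate goes back to Kurata and to Dziuba\'nski--Zienkiewicz), so there is no in-paper argument to compare against. Your Feynman--Kac route --- exponential-moment decay for $\mathbf{E}_x[\exp(-\int_0^T V(b_s)\,ds)]$ via windows of length $\sim\rho(\cdot)^2$, transfer to the diagonal through the bridge density, Cauchy--Schwarz via the semigroup identity $p_t(x,x)=\int p_{t/2}(x,z)^2\,dz$, and geometric interpolation with the Gaussian bound to trade $e^{-|x-y|^2/(4t)}$ for $e^{-|x-y|^2/(5t)}$ --- is essentially the standard probabilistic proof, and the last three moves are carried out correctly (the $4/5$--$1/5$ interpolation and the $2^{d/2}$ bridge-density bound both check out).

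There is, however, one step that fails as literally written: the one-window estimate. You assert that $\mathbf{E}_x[\int_0^{\theta_0}V(b_s)\,ds]\le\tfrac12$ together with Khasminskii's lemma yields $\mathbf{E}_x[\exp(-\int_0^{\theta_0}V(b_s)\,ds)]\le\eta<1$. An \emph{upper} bound on $\mathbf{E}_x[X]$ with $X\ge0$ gives, by Jensen, only the useless \emph{lower} bound $\mathbf{E}_x[e^{-X}]\ge e^{-1/2}$; if $V$ vanished near $x$ on that time scale the expectation would equal $1$. What the window argument actually requires is the \emph{lower} bound $\mathbf{E}_x[\int_0^{\theta_0}V(b_s)\,ds]\ge c_0>0$ (which your two-sided ``$\sim 1$'' computation from the definition of $\rho$ and doubling of $V\,dz$ does supply), combined with the expansion $\mathbf{E}[e^{-X}]\le 1-\mathbf{E}[X]+\tfrac12\mathbf{E}[X^2]$ and the Markov-property estimate $\mathbf{E}_x[X^2]\le 2\,\mathbf{E}_x[X]\cdot\sup_z\mathbf{E}_z[\int_0^{\theta_0}V(b_s)\,ds]$; it is only in controlling this second moment (and hence the supremum over starting points $z$ in the window, where $\rho(z)\sim\rho(x)$ by \eqref{com}) that the Khasminskii-type upper bound enters. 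So the ingredients are all present in your write-up, but the logical roles of the upper and lower bounds are inverted, and as stated the step would not produce $\eta<1$. A secondary point worth a sentence in a full write-up: in the iteration you restrict to the event $\{\sup_{s\le T}|b_s-x|\le A\sqrt T\}$, which is not measurable at the stopping times $\sigma_j$; the clean fix is to run the iterated conditioning for $\mathbf{E}_x[e^{-\int_0^{\sigma_N}V}]\le\eta^N$ unconditionally and then observe that on the good event $\sigma_N\le T$, so $e^{-\int_0^TV}\mathbf{1}_{\mathrm{event}}\le e^{-\int_0^{\sigma_N}V}$.
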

For given $0<\alpha<d$, the \emph{$\mathcal L$-Riesz potential} or \emph{$\mathcal L$-Fractional integral operator} is defined by
\begin{equation}\label{fi}
\begin{split}
\mathcal I_{\alpha}f(x)&:={\mathcal L}^{-{\alpha}/2}f(x)\\
&=\int_0^{\infty}e^{-t\mathcal L}f(x)\,t^{\alpha/2-1}dt.
\end{split}
\end{equation}
In this work we shall be interested in the behavior of the $\mathcal L$-fractional integral operator $\mathcal I_{\alpha}={\mathcal L}^{-{\alpha}/2}$.

\subsection{$A^{\rho,\infty}_p$ and $A^{\rho,\infty}_{p,q}$ weights}
A weight will always mean a nonnegative function which is locally integrable on $\mathbb R^d$. Given a Lebesgue measurable set $E$ and a weight $w$, $|E|$ will denote the Lebesgue measure of $E$ and
\begin{equation*}
w(E)=\int_E w(x)\,dx.
\end{equation*}
Given $B=B(x_0,r)$ and $t>0$, we will write $tB$ for the $t$-dilate ball, which is the ball with the same center $x_0$ and radius $tr$. As in \cite{bong1} (see also \cite{bong2,bong3}), we say that a weight $w$ belongs to the class $A^{\rho,\theta}_p$ for $1<p<\infty$ and $0<\theta<\infty$, if there is a positive constant $C>0$ such that for all balls $B=B(x_0,r)\subset\mathbb R^d$ with $x_0\in\mathbb R^d$ and $r>0$,
\begin{equation*}
\bigg(\frac{1}{|B|}\int_B w(x)\,dx\bigg)^{1/p}\bigg(\frac{1}{|B|}\int_B w(x)^{-{p'}/p}\,dx\bigg)^{1/{p'}}
\leq C\cdot\left(1+\frac{r}{\rho(x_0)}\right)^{\theta},
\end{equation*}
where $p'$ is the dual exponent of $p$ such that $1/p+1/{p'}=1$. For $p=1$ and $0<\theta<\infty$, we also say that a weight $w$ belongs to the class $A^{\rho,\theta}_1$, if there is a positive constant $C>0$ such that for all balls $B=B(x_0,r)\subset\mathbb R^d$ with $x_0\in\mathbb R^d$ and $r>0$,
\begin{equation*}
\frac1{|B|}\int_B w(x)\,dx\leq C\cdot\left(1+\frac{r}{\rho(x_0)}\right)^{\theta}\underset{x\in B}{\mbox{ess\,inf}}\;w(x).
\end{equation*}
For given $1\leq p<\infty$, we define
\begin{equation*}
A^{\rho,\infty}_p:=\bigcup_{\theta>0}A^{\rho,\theta}_p.
\end{equation*}
For any given $\theta>0$, let us introduce the maximal operator which is given in terms of the critical radius function \eqref{rho}.
\begin{equation*}
M_{\rho,\theta}f(x):=\sup_{r>0}\left(1+\frac{r}{\rho(x)}\right)^{-\theta}\frac{1}{|B(x,r)|}\int_{B(x,r)}|f(y)|\,dy,\quad x\in\mathbb R^d.
\end{equation*}
Observe that a weight $w$ belongs to the class $A^{\rho,\infty}_1$ if and only if there exists a positive number $\theta>0$ such that $M_{\rho,\theta}(w)(x)\leq Cw(x)$, for a.e.~$x\in\mathbb R^d$, where the constant $C>0$ is independent of $w$. Since
\begin{equation}\label{cc}
1\leq\left(1+\frac{r}{\rho(x_0)}\right)^{\theta_1}\leq\left(1+\frac{r}{\rho(x_0)}\right)^{\theta_2}
\end{equation}
for $0<\theta_1<\theta_2<\infty$, then for given $p$ with $1\leq p<\infty$, one has
\begin{equation*}
A_p\subset A^{\rho,\theta_1}_p\subset A^{\rho,\theta_2}_p,
\end{equation*}
where $A_p$ denotes the classical Muckenhoupt's class (see \cite[Chapter 7]{grafakos}), and hence $A_p\subset A^{\rho,\infty}_p$. In addition, for some fixed $\theta>0$ (see \cite{tang}),
\begin{equation*}
A^{\rho,\theta}_1\subset A^{\rho,\theta}_{p_1}\subset A^{\rho,\theta}_{p_2},
\end{equation*}
whenever $1\leq p_1<p_2<\infty$. Obviously, for any fixed $\theta>0$,
\begin{equation}\label{2rx}
1\leq\left(1+\frac{2r}{\rho(x_0)}\right)^{\theta}\leq 2^{\theta}\left(1+\frac{r}{\rho(x_0)}\right)^{\theta}.
\end{equation}
To establish weighted norm inequalities for fractional integrals, we need to introduce another weight class $A^{\rho,\infty}_{p,q}$. As in \cite{tang}, we say that a weight $w$ satisfies the condition $A^{\rho,\theta}_{p,q}$ for $1<p<q<\infty$ and $0<\theta<\infty$, if there exists a positive constant $C>0$ such that for any ball $B=B(x_0,r)\subset\mathbb R^d$ with $x_0\in\mathbb R^d$ and $r>0$,
\begin{equation*}
\bigg(\frac{1}{|B|}\int_B w(x)^q\,dx\bigg)^{1/q}\bigg(\frac{1}{|B|}\int_B w(x)^{-{p'}}\,dx\bigg)^{1/{p'}}
\leq C\cdot\left(1+\frac{r}{\rho(x_0)}\right)^{\theta},
\end{equation*}
where $p'=p/{(p-1)}$. We also say that a weight $w$ satisfies the condition $A^{\rho,\theta}_{1,q}$ for $1<q<\infty$ and $0<\theta<\infty$, if there exists a positive constant $C>0$ such that for any ball $B=B(x_0,r)\subset\mathbb R^d$ with $x_0\in\mathbb R^d$ and $r>0$,
\begin{equation*}
\bigg(\frac1{|B|}\int_B w(x)^q\,dx\bigg)^{1/q}\leq C\cdot\left(1+\frac{r}{\rho(x_0)}\right)^{\theta}\underset{x\in B}{\mbox{ess\,inf}}\;w(x).
\end{equation*}
Similarly, for given $p,q$ with $1\leq p<q<\infty$, by \eqref{cc}, one has
\begin{equation*}
A_{p,q}\subset A^{\rho,\theta_1}_{p,q}\subset A^{\rho,\theta_2}_{p,q},
\end{equation*}
whenever $0<\theta_1<\theta_2<\infty$. Here $A_{p,q}$ denotes the classical Muckenhoupt-Wheeden's class (see \cite{muckenhoupt}). We also define
\begin{equation*}
A^{\rho,\infty}_{p,q}:=\bigcup_{\theta>0}A^{\rho,\theta}_{p,q},\qquad 1\leq p<q<\infty.
\end{equation*}
So we have $A_{p,q}\subset A^{\rho,\infty}_{p,q}$. The following results (Lemmas \ref{rh}--\ref{Apq}) are extensions of well-known properties of $A_p$ and $A_{p,q}$ weights. We first present an important property of the classes of weights in $A^{\rho,\theta}_p$ with $1\leq p<\infty$, which was given by Bongioanni, Harboure and Salinas in \cite[Lemma 5]{bong1}.
\begin{lem}[\cite{bong1}]\label{rh}
If $w\in A^{\rho,\theta}_p$ with $0<\theta<\infty$ and $1\leq p<\infty$, then there exist positive constants $\epsilon,\eta>0$ and $C>0$ such that
\begin{equation}\label{rholder}
\left(\frac{1}{|B|}\int_B w(x)^{1+\epsilon}dx\right)^{\frac{1}{1+\epsilon}}
\leq C\left(\frac{1}{|B|}\int_B w(x)\,dx\right)\left(1+\frac{r}{\rho(x_0)}\right)^{\eta}
\end{equation}
for every ball $B=B(x_0,r)$ in $\mathbb R^d$.
\end{lem}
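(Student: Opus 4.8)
The plan is to deduce \eqref{rholder} from the classical reverse H\"older inequality for Muckenhoupt $A_p$ weights: first on ``small'' balls, where a weight in $A^{\rho,\theta}_p$ behaves like an ordinary $A_p$ weight, and then on arbitrary balls by patching with a covering of $\mathbb R^d$ by critical balls. For a ball $B=B(x_0,r)$ write $\Lambda=\Lambda(B):=1+r/\rho(x_0)$.

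\emph{Small balls.} Suppose first that $r\le\rho(x_0)$. If $\tilde B=B(\tilde x,\tilde r)\subseteq B$, then $|\tilde x-x_0|<r\le\rho(x_0)$ and $\tilde r\le 2r$, so the left-hand estimate in \eqref{com} (with $x=x_0$ and $y=\tilde x$) gives $\rho(\tilde x)\ge C_0^{-1}\rho(x_0)\big(1+r/\rho(x_0)\big)^{-N_0}\ge c_0\rho(x_0)$ with $c_0:=C_0^{-1}2^{-N_0}$, whence $1+\tilde r/\rho(\tilde x)\le 1+2/c_0$. Substituting this bound into the definition of $A^{\rho,\theta}_p$ shows that $w$ satisfies the ordinary $A_p$ condition over every subball $\tilde B\subseteq B$ with a constant $A$ that is independent of $B$ (for $p=1$ one argues in the same way using the essential-infimum formulation). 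Since the Calder\'on--Zygmund proof of the reverse H\"older inequality for $A_p$ weights --- equivalently, the self-improving property of the $A_\infty$ condition --- uses only the $A_p$ condition for cubes contained in a fixed dilate of the cube in question, it applies here and produces positive constants $\epsilon_0$ and $C_1$, depending only on $A$ and $d$ (hence independent of $B$), such that $\big(\tfrac1{|B|}\int_B w^{1+\epsilon_0}\big)^{1/(1+\epsilon_0)}\le C_1\tfrac1{|B|}\int_B w$.

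\emph{General balls.} Let now $B=B(x_0,r)$ be arbitrary. By \eqref{com} one may fix a sequence $\{x_i\}$ such that the critical balls $Q_i:=B(x_i,\rho(x_i))$ cover $\mathbb R^d$ and, for each fixed $\sigma\ge1$, the dilates $\{\sigma Q_i\}$ have bounded overlap. Two more uses of \eqref{com} yield: (i) if $Q_i\cap B\neq\emptyset$, then $Q_i\subseteq C'B$ for some $C'$ depending only on $C_0,N_0$; and (ii) every $x_i\in C'B$ satisfies $\rho(x_i)\ge c\,\rho(x_0)\Lambda^{-N_0}$, so that, since $r<\Lambda\rho(x_0)$, $|Q_i|\ge c\,\Lambda^{-(N_0+1)d}|B|$. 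Applying the small-ball step to each $Q_i$ (whose radius equals $\rho(x_i)$, so its $\Lambda$-value is $2$) and summing over the indices $i$ with $Q_i\cap B\neq\emptyset$,
\[
\int_B w^{1+\epsilon_0}\le\sum_i\int_{Q_i}w^{1+\epsilon_0}\le C_1^{1+\epsilon_0}\sum_i\frac{w(Q_i)^{1+\epsilon_0}}{|Q_i|^{\epsilon_0}}\le C\,\Lambda^{(N_0+1)d\,\epsilon_0}|B|^{-\epsilon_0}\Big(\sum_i w(Q_i)\Big)^{1+\epsilon_0},
\]
where we used $\sum_i a_i^{1+\epsilon_0}\le(\sum_i a_i)^{1+\epsilon_0}$ for $a_i\ge0$. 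Combining the bounded overlap with the doubling inequality $w(C'B)\le C\,\Lambda^{\theta'}w(B)$ --- which follows from the $A^{\rho,\theta}_p$ condition on $C'B$ by the standard ``$A_\infty$'' argument, since the defining factor $(1+C'r/\rho(x_0))^{\theta}$ is $\lesssim\Lambda^{\theta}$ --- gives $\sum_i w(Q_i)\le C\,\Lambda^{\theta'}w(B)$. Substituting this, dividing by $|B|$, and taking $(1+\epsilon_0)$-th roots yields \eqref{rholder} with $\epsilon=\epsilon_0$ and a suitable positive $\eta$.

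\emph{The main obstacle.} The only genuinely delicate point is the small-ball step: applying the classical reverse H\"older inequality to a weight that is known a priori only to satisfy the $A_p$ condition over subballs of a fixed ball. This is handled either by observing that the Calder\'on--Zygmund argument is purely local, or by first extending $w|_B$ to a genuine $A_p$ weight on all of $\mathbb R^d$ with comparable constant. Everything else is routine bookkeeping of the powers of $1+r/\rho(x_0)$ produced by \eqref{com}, by the critical-ball covering, and by the doubling property of $w$.
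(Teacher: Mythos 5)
The paper offers no proof of this lemma: it is quoted from Bongioanni--Harboure--Salinas \cite{bong1} (Lemma 5 there), so there is no in-paper argument to compare against; your reconstruction in fact follows essentially the route of that reference (a local reverse H\"older inequality on sub-critical balls, where by \eqref{com} the $A^{\rho,\theta}_p$ condition collapses to a uniform ordinary $A_p$ condition, patched together over a covering of $\mathbb R^d$ by critical balls), and the bookkeeping of the powers of $1+r/\rho(x_0)$ checks out: the exponent you obtain is $\eta=\theta'+(N_0+1)d\,\epsilon_0/(1+\epsilon_0)$ with $\theta'$ the doubling exponent, which is of the right form. Two remarks. First, assertion (i) of your patching step --- that $Q_i\cap B\neq\emptyset$ forces $Q_i\subseteq C'B$ --- is false when $r\ll\rho(x_0)$, since then the critical balls meeting $B$ have radius comparable to $\rho(x_0)\gg r$ and cannot sit inside a fixed dilate of $B$; the covering argument must be explicitly confined to the regime $r>\rho(x_0)$ (where $\rho(x_i)\lesssim\rho(x_0)+r\lesssim r$ does give the inclusion), the complementary regime being exactly the one already settled by the small-ball step. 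As written, ``let $B$ be arbitrary'' leaves this hole, though the repair is one line. Second, the point you flag yourself --- that the Calder\'on--Zygmund proof of the reverse H\"older inequality uses the $A_p$ (hence $A_\infty$) information only on subcubes of a fixed dilate of the given cube, so a uniform $A_p$ bound over subballs of $2B$ yields a local reverse H\"older inequality on $B$ with constants independent of $B$ --- is correct and standard, but it is the load-bearing step and a complete write-up should either carry out that localization or cite it precisely rather than gesture at it.
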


As a direct consequence of Lemma \ref{rh}, we have the following result.
\begin{lem}\label{comparelem}
If $w\in A^{\rho,\theta}_p$ with $0<\theta<\infty$ and $1\leq p<\infty$, then there exist two positive numbers $\delta>0$ and $\eta>0$ such that
\begin{equation}\label{compare}
\frac{w(E)}{w(B)}\leq C\left(\frac{|E|}{|B|}\right)^\delta\left(1+\frac{r}{\rho(x_0)}\right)^{\eta}
\end{equation}
for any measurable subset $E$ of a ball $B=B(x_0,r)$, where $C>0$ is a constant which does not depend on $E$ and $B$.
\end{lem}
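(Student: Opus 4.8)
The plan is to derive \eqref{compare} from the reverse H\"older inequality \eqref{rholder} via H\"older's inequality, mimicking the classical argument but carefully tracking the extra factors $\left(1+r/\rho(x_0)\right)$ that arise. First I would fix a ball $B=B(x_0,r)$ and a measurable subset $E\subset B$, and apply H\"older's inequality with exponents $1+\epsilon$ and $(1+\epsilon)' = (1+\epsilon)/\epsilon$ to the integral $\int_E w(x)\,dx$, writing $w = w\cdot\mathbf 1_E$:
\begin{equation*}
\int_E w(x)\,dx \leq \left(\int_B w(x)^{1+\epsilon}\,dx\right)^{\frac{1}{1+\epsilon}}|E|^{\frac{\epsilon}{1+\epsilon}}.
\end{equation*}
Then I would invoke \eqref{rholder} to bound the first factor on the right by $C\,|B|^{\frac{1}{1+\epsilon}}\left(\frac{1}{|B|}\int_B w\right)\left(1+\frac{r}{\rho(x_0)}\right)^{\eta}|B|^{-\frac{1}{1+\epsilon}}$ — more precisely, \eqref{rholder} gives
\begin{equation*}
\left(\int_B w(x)^{1+\epsilon}\,dx\right)^{\frac{1}{1+\epsilon}} \leq C\,|B|^{\frac{1}{1+\epsilon}-1}\left(\int_B w(x)\,dx\right)\left(1+\frac{r}{\rho(x_0)}\right)^{\eta}.
\end{equation*}

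Combining these two estimates yields
\begin{equation*}
\int_E w(x)\,dx \leq C\left(\frac{|E|}{|B|}\right)^{\frac{\epsilon}{1+\epsilon}}\left(1+\frac{r}{\rho(x_0)}\right)^{\eta}\int_B w(x)\,dx,
\end{equation*}
which is exactly \eqref{compare} with $\delta := \epsilon/(1+\epsilon) \in (0,1)$ and the same $\eta$ furnished by Lemma \ref{rh}. Dividing both sides by $w(B)=\int_B w(x)\,dx$ (which is positive and finite since $w$ is a nontrivial locally integrable weight) gives the stated form. The constant $C$ is the product of the H\"older constant (which is $1$) and the constant from \eqref{rholder}, hence depends only on $w$ (through $p$, $\theta$, $\epsilon$, $\eta$) and the dimension, but not on $E$ or $B$.

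There is essentially no serious obstacle here: the only point requiring a small amount of care is bookkeeping the powers of $|B|$ when passing between the $L^{1+\epsilon}$-average normalization used in \eqref{rholder} and the unnormalized integrals, and confirming that the exponent $\eta$ can be taken to be the same one produced by Lemma \ref{rh} (it can, since we never iterate the estimate). One should also note explicitly that $0 < \delta < 1$, which is all that is needed for the later applications. Thus the proof is a direct two-line consequence of Lemma \ref{rh} together with H\"older's inequality.
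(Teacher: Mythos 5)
Your proof is correct and follows essentially the same route as the paper: Hölder's inequality with exponent $1+\epsilon$ applied to $\int_E w=\int_B\chi_E w$, followed by the reverse Hölder-type inequality \eqref{rholder}, giving $\delta=\epsilon/(1+\epsilon)$ and the same $\eta$. Your bookkeeping of the powers of $|B|$ is accurate (and in fact cleaner than the paper's, whose final displayed line drops the factor $w(B)$).
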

\begin{proof}
For any given ball $B=B(x_0,r)$ with $x_0\in\mathbb R^d$ and $r>0$, suppose that $E\subset B$, then by H\"older's inequality with exponent $1+\epsilon$ and \eqref{rholder}, we can deduce that
\begin{equation*}
\begin{split}
w(E)&=\int_{B}\chi_E(x)\cdot w(x)\,dx\\
&\leq
\left(\int_B w(x)^{1+\epsilon}dx\right)^{\frac{1}{1+\epsilon}}
\left(\int_B\chi_E(x)^{\frac{1+\epsilon}{\epsilon}}\,dx\right)^{\frac{\epsilon}{1+\epsilon}}\\
&\leq C|B|^{\frac{1}{1+\epsilon}}\left(\frac{1}{|B|}\int_B w(x)\,dx\right)\left(1+\frac{r}{\rho(x_0)}\right)^{\eta}|E|^{\frac{\epsilon}{1+\epsilon}}\\
&=C\left(\frac{|E|}{|B|}\right)^{\frac{\epsilon}{1+\epsilon}}\left(1+\frac{r}{\rho(x_0)}\right)^{\eta}.
\end{split}
\end{equation*}
This gives \eqref{compare} with $\delta=\epsilon/{(1+\epsilon)}$. Here and in the sequel, the characteristic function of $E$ is denoted by $\chi_E$.
\end{proof}

In view of Lemma \ref{rh}, we now define the reverse H\"older-type class $RH^{\rho,\theta}_q$ that is given in terms of the critical radius function \eqref{rho}. We say that $w\in RH^{\rho,\theta}_q$ for some $1<q<\infty$ and $0<\theta<\infty$, if there exists a positive constant $C>0$ such that the following reverse H\"older-type inequality
\begin{equation*}
\left(\frac{1}{|B|}\int_B w(x)^{q}dx\right)^{1/q}
\leq C\left(\frac{1}{|B|}\int_B w(x)\,dx\right)\left(1+\frac{r}{\rho(x_0)}\right)^{\theta}
\end{equation*}
holds for every ball $B=B(x_0,r)$ in $\mathbb R^d$. The class $RH^{\rho,\infty}_{q}$ is defined as
\begin{equation*}
RH^{\rho,\infty}_{q}:=\bigcup_{\theta>0}RH^{\rho,\theta}_q,\qquad 1<q<\infty.
\end{equation*}
Clearly, one has $RH_q\subset RH^{\rho,\infty}_{q}$.

Let $1<q<\infty$ and $A^q_1=\big\{w:w^q\in A_1\big\}$. It is known that for the classical case (see \cite{johnson}),
\begin{equation*}
A^q_1=A_1\cap RH_q.
\end{equation*}
\begin{lem}\label{relation}
If $1<q<\infty$ and $0<\theta<\infty$, then $w^q\in A^{\rho,\theta}_{1}$ implies that $w\in A^{\rho,\theta/q}_1\bigcap RH^{\rho,\theta/q}_q$.
\end{lem}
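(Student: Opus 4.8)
The plan is to unravel the two defining inequalities separately, using the hypothesis $w^q \in A^{\rho,\theta}_1$ in the form
\[
\frac{1}{|B|}\int_B w(x)^q\,dx \leq C\left(1+\frac{r}{\rho(x_0)}\right)^{\theta}\,\underset{x\in B}{\mathrm{ess\,inf}}\;w(x)^q
\]
for every ball $B=B(x_0,r)$. First I would derive the $RH^{\rho,\theta/q}_q$ membership. Starting from the displayed bound, raise both sides to the power $1/q$ to get
\[
\left(\frac{1}{|B|}\int_B w^q\right)^{1/q} \leq C^{1/q}\left(1+\frac{r}{\rho(x_0)}\right)^{\theta/q}\,\underset{x\in B}{\mathrm{ess\,inf}}\;w(x).
\]
Since $\underset{x\in B}{\mathrm{ess\,inf}}\,w \leq \frac{1}{|B|}\int_B w$ trivially, the right-hand side is dominated by $C^{1/q}\big(1+\frac{r}{\rho(x_0)}\big)^{\theta/q}\cdot\frac{1}{|B|}\int_B w$, which is exactly the reverse Hölder-type inequality defining $RH^{\rho,\theta/q}_q$ with parameter $\theta/q$.

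Next I would establish that $w\in A^{\rho,\theta/q}_1$. By Jensen's inequality (convexity of $t\mapsto t^q$ for $q>1$),
\[
\left(\frac{1}{|B|}\int_B w(x)\,dx\right)^{q} \leq \frac{1}{|B|}\int_B w(x)^q\,dx \leq C\left(1+\frac{r}{\rho(x_0)}\right)^{\theta}\,\underset{x\in B}{\mathrm{ess\,inf}}\;w(x)^q.
\]
Since $\underset{x\in B}{\mathrm{ess\,inf}}\,w(x)^q = \big(\underset{x\in B}{\mathrm{ess\,inf}}\,w(x)\big)^q$, taking $q$-th roots gives
\[
\frac{1}{|B|}\int_B w(x)\,dx \leq C^{1/q}\left(1+\frac{r}{\rho(x_0)}\right)^{\theta/q}\,\underset{x\in B}{\mathrm{ess\,inf}}\;w(x),
\]
which is precisely the $A^{\rho,\theta/q}_1$ condition. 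Intersecting the two conclusions finishes the proof.

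I do not expect a genuine obstacle here; the statement is the $\rho$-localized analogue of the classical identity $A_1^q = A_1 \cap RH_q$, and both halves follow from Jensen's inequality together with the elementary fact that the essential infimum is bounded by the average. The only point requiring a moment's care is the bookkeeping of the exponent: the factor $\big(1+r/\rho(x_0)\big)^{\theta}$ becomes $\big(1+r/\rho(x_0)\big)^{\theta/q}$ after extracting the $q$-th root, which is why the target classes carry the parameter $\theta/q$ rather than $\theta$. One could alternatively invoke Lemma~\ref{rh} (reverse-Hölder self-improvement) to obtain a version with some unspecified $\eta$, but the direct Jensen argument yields the sharper and cleaner statement asserted, so I would present that.
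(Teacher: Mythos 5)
Your proposal is correct, and the first half (that $w\in A^{\rho,\theta/q}_1$) is exactly the paper's argument: Jensen/H\"older gives $\frac{1}{|B|}\int_B w\leq\big(\frac{1}{|B|}\int_B w^q\big)^{1/q}$, and then one takes $q$-th roots of the hypothesis. For the reverse H\"older half you take a shortcut: you read the $A^{\rho,\theta}_1$ condition directly as a bound on $\big(\frac{1}{|B|}\int_B w^q\big)^{1/q}$ by $\mathrm{ess\,inf}_B\,w$ and then use the trivial inequality $\mathrm{ess\,inf}_B\,w\leq\frac{1}{|B|}\int_B w$, whereas the paper reproves the pointwise a.e.\ lower bound $w^q(y)\gtrsim w^q(B)/|B|$ via shrinking balls and the Lebesgue differentiation theorem before averaging over $B$. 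Since averaging the pointwise bound is the same as invoking $\mathrm{ess\,inf}\leq\text{average}$, your route is a genuine simplification of the same underlying idea rather than a different proof, and it is fully valid (the identity $\mathrm{ess\,inf}_B\,w^q=(\mathrm{ess\,inf}_B\,w)^q$ you use holds because $t\mapsto t^q$ is increasing on $[0,\infty)$).
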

\begin{proof}
The conclusion $w\in A^{\rho,\theta/q}_1$ follows easily by H\"older's inequality and the definition of $A^{\rho,\theta}_{1}$. Indeed, for any given ball $B=B(x_0,r)$ with $x_0\in\mathbb R^d$ and $r>0$,
\begin{equation*}
\begin{split}
\frac1{|B|}\int_B w(x)\,dx&\leq\bigg(\frac1{|B|}\int_B w^q(x)\,dx\bigg)^{1/q}\\
&\leq C\cdot\left(1+\frac{r}{\rho(x_0)}\right)^{\theta/q}\Big(\underset{x\in B}{\mbox{ess\,inf}}\;w^q(x)\Big)^{1/q}\\
&=C\cdot\left(1+\frac{r}{\rho(x_0)}\right)^{\theta/q}\underset{x\in B}{\mbox{ess\,inf}}\;w(x).
\end{split}
\end{equation*}
On the other hand, fix a ball $B=B(x_0,r)\subset\mathbb R^d$ and take $y\in B$. Let $E$ be a ball centered at $y_0$ and with radius $h$ which contains $y$. By picking $h$ small enough so that $E\subset B$, then by the condition $w^q\in A^{\rho,\theta}_{1}$, we can deduce that
\begin{equation*}
\begin{split}
\frac{1}{|B|}\int_{B}\chi_E(x)\,dx
&\leq \frac{C}{w^q(B)}\cdot\underset{x\in B}{\mbox{ess\,inf}}\;w^q(x)
\bigg(\int_{B}\chi_E(x)\,dx\bigg)\left(1+\frac{r}{\rho(x_0)}\right)^{\theta}\\
&\leq \frac{C}{w^q(B)}\bigg(\int_{B}\chi_E(x)\cdot w^q(x)\,dx\bigg)\left(1+\frac{r}{\rho(x_0)}\right)^{\theta},
\end{split}
\end{equation*}
which is equivalent to
\begin{equation*}
\frac{w^q(B)}{|B|}\leq C\cdot\frac{w^q(E)}{|E|}\left(1+\frac{r}{\rho(x_0)}\right)^{\theta}.
\end{equation*}
Since this holds for all $E\subset B$ and $y\in E$, then by taking $h\rightarrow 0^+$ and using Lebesgue differentiation theorem,
\begin{equation*}
\frac{w^q(B)}{|B|}\leq C\cdot w^q(y)\left(1+\frac{r}{\rho(x_0)}\right)^{\theta}.
\end{equation*}
Thus, by raising both sides to the power $1/q$ and integrating over $B$, we get
\begin{equation*}
\left(\frac{1}{|B|}\int_B w(y)^{q}dy\right)^{1/q}
\leq C\left(\frac{1}{|B|}\int_B w(y)\,dy\right)\left(1+\frac{r}{\rho(x_0)}\right)^{\theta/q}.
\end{equation*}
This amounts to $w\in RH^{\rho,\theta/q}_q$.
\end{proof}

A subtle interplay between these two classes of weights, $A^{\rho,\infty}_p$ and $A^{\rho,\infty}_{p,q}$, is expressed by the following lemma:
\begin{lem}\label{Apq}
Suppose that $1\leq p<q<\infty$. Then the following statements are true:
\begin{itemize}
  \item if $p>1$ and $0<\theta<\infty$, then $w\in A^{\rho,\theta}_{p,q}$ implies that $w^q\in A^{\rho,\theta\cdot\frac{1}{1/q+1/{p'}}}_t$ with $t=1+q/{p'}$, and $w^{-p'}\in A^{\rho,\theta\cdot\frac{1}{1/q+1/{p'}}}_{t'}$ with $t'=1+{p'}/q$;
  \item if $p=1$ and $0<\theta<\infty$, then $w\in A^{\rho,\theta}_{1,q}$ implies that $w^q\in A^{\rho,\theta\cdot q}_1$.
\end{itemize}
\end{lem}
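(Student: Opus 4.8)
The plan is to read each conclusion directly off the definition of $A^{\rho,\theta}_{p,q}$, taking care to verify the target weight condition in exactly the \emph{balanced} form in which $A^{\rho,\theta}_{p}$ is defined in the excerpt (with exponents $1/p$ and $1/{p'}$), rather than in an equivalent but unbalanced reformulation; only the balanced form preserves the sharp power of $1+r/\rho(x_0)$ claimed in the lemma.

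Assume first $p>1$ and set $t=1+q/{p'}$. A short computation gives $t'=t/(t-1)=(p'+q)/q$, hence $1/t=p'/(p'+q)$, $1/{t'}=q/(p'+q)$ and $1/(t-1)=p'/q$, so that $(w^q)^{-1/(t-1)}=w^{-p'}$. Therefore, by the definition of $A^{\rho,\theta'}_{t}$, proving $w^q\in A^{\rho,\theta'}_{t}$ reduces to showing, for every ball $B=B(x_0,r)$,
\[
\left(\frac{1}{|B|}\int_B w(x)^{q}\,dx\right)^{\!p'/(p'+q)}\left(\frac{1}{|B|}\int_B w(x)^{-p'}\,dx\right)^{\!q/(p'+q)}\le C\left(1+\frac{r}{\rho(x_0)}\right)^{\theta'}.
\]
Since $\tfrac1q\cdot\tfrac{qp'}{p'+q}=\tfrac{p'}{p'+q}$ and $\tfrac1{p'}\cdot\tfrac{qp'}{p'+q}=\tfrac{q}{p'+q}$, the left-hand side equals
\[
\left[\left(\frac{1}{|B|}\int_B w(x)^{q}\,dx\right)^{\!1/q}\left(\frac{1}{|B|}\int_B w(x)^{-p'}\,dx\right)^{\!1/{p'}}\right]^{\frac{qp'}{p'+q}},
\]
and by the hypothesis $w\in A^{\rho,\theta}_{p,q}$ the bracketed factor is at most $C\big(1+r/\rho(x_0)\big)^{\theta}$. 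Raising to the power $\tfrac{qp'}{p'+q}=\big(\tfrac1q+\tfrac1{p'}\big)^{-1}$ yields the desired bound with $\theta'=\theta\cdot\big(\tfrac1q+\tfrac1{p'}\big)^{-1}$, which is the first assertion. For $w^{-p'}$, put $t'=1+{p'}/q$; one checks in the same way that $(t')'=(q+p')/{p'}$ and $(w^{-p'})^{-1/(t'-1)}=w^{q}$, so that the $A^{\rho,\theta'}_{t'}$ condition for $w^{-p'}$ is the \emph{same} symmetric product as above, with the two integral factors interchanged. Hence the identical argument gives $w^{-p'}\in A^{\rho,\theta'}_{t'}$ with $t'=1+{p'}/q$ and the same $\theta'$.

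When $p=1$ the argument is even shorter. The hypothesis $w\in A^{\rho,\theta}_{1,q}$ reads, for every ball $B=B(x_0,r)$,
\[
\left(\frac{1}{|B|}\int_B w(x)^{q}\,dx\right)^{\!1/q}\le C\left(1+\frac{r}{\rho(x_0)}\right)^{\theta}\,\underset{x\in B}{\mbox{ess\,inf}}\;w(x);
\]
raising both sides to the $q$-th power and using that the essential infimum of $w^q$ over $B$ is the $q$-th power of the essential infimum of $w$ over $B$ gives precisely the $A^{\rho,\theta q}_{1}$ condition for $w^q$.

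No step here is a genuine obstacle; the proof is purely a bookkeeping of exponents. The one point that deserves attention is the one noted at the start: the target conditions must be verified in their balanced form, since running the computation through an unbalanced version of $A^{\rho}_t$ --- for instance, by simply raising the $A_{p,q}$ inequality to the $q$-th power --- would only produce the weaker exponent $\theta q$ in place of $\theta\,(1/q+1/p')^{-1}$.
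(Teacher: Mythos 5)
Your proposal is correct and follows essentially the same route as the paper: both verify the $A^{\rho,\cdot}_t$ (resp. $A^{\rho,\cdot}_{t'}$, $A^{\rho,\cdot}_1$) condition directly by checking that the exponent arithmetic turns the defining product into the $A^{\rho,\theta}_{p,q}$ product raised to the power $(1/q+1/{p'})^{-1}$ (resp. to the power $q$ in the $p=1$ case). Your remark that the "balanced" normalization of $A^{\rho,\theta}_p$ is what yields the exponent $\theta\cdot(1/q+1/{p'})^{-1}$ is accurate and consistent with the paper's computation.
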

\begin{proof}
In fact, when $t=1+q/{p'}$, then a simple computation shows that
\begin{equation*}
\frac{\,1\,}{t}=\frac{\,1\,}{q}\cdot\frac{1}{1/q+1/{p'}},\qquad \frac{\,1\,}{t'}=\frac{t-1}{t}=\frac{\,1\,}{p'}\cdot\frac{1}{1/q+1/{p'}},
\end{equation*}
and
\begin{equation*}
q\cdot\Big(-\frac{t'}{\,t\,}\Big)=-q\cdot\frac{1}{t-1}=-p'.
\end{equation*}
If $w\in A^{\rho,\theta}_{p,q}$ with $1<p<q<\infty$ and $0<\theta<\infty$, then we have
\begin{equation*}
\begin{split}
&\bigg(\frac{1}{|B|}\int_B w^q(x)\,dx\bigg)^{1/t}\bigg(\frac{1}{|B|}\int_B w^q(x)^{-{t'}/t}\,dx\bigg)^{1/{t'}}\\
&=\bigg[\bigg(\frac{1}{|B|}\int_B w(x)^q\,dx\bigg)^{1/q}\bigg(\frac{1}{|B|}\int_B w(x)^{-{p'}}\,dx\bigg)^{1/{p'}}\bigg]^{\frac{1}{1/q+1/{p'}}}\\
&\leq C\cdot\left(1+\frac{r}{\rho(x_0)}\right)^{\theta\cdot\frac{1}{1/q+1/{p'}}},
\end{split}
\end{equation*}
which means that $w^q\in A^{\rho,\theta\cdot\frac{1}{1/q+1/{p'}}}_{t}$ with $t=1+q/{p'}$. Here and in the sequel, for any positive number $\gamma>0$, we denote $w^{\gamma}(x)=w(x)^{\gamma}$ by convention. Analogously, it can be easily shown that $w^{-p'}\in A^{\rho,\theta\cdot\frac{1}{1/q+1/{p'}}}_{t'}$ with $t'=1+{p'}/q$. On the other hand, if $w\in A^{\rho,\theta}_{1,q}$ with $1<q<\infty$ and $0<\theta<\infty$, then we have
\begin{equation*}
\begin{split}
\frac1{|B|}\int_B w^q(x)\,dx&\leq C\cdot\left(1+\frac{r}{\rho(x_0)}\right)^{\theta\cdot q}\Big(\underset{x\in B}{\mbox{ess\,inf}}\;w(x)\Big)^q\\
&=C\cdot\left(1+\frac{r}{\rho(x_0)}\right)^{\theta\cdot q}\underset{x\in B}{\mbox{ess\,inf}}\;w^q(x),
\end{split}
\end{equation*}
which means that $w^q\in A^{\rho,\theta\cdot q}_{1}$.
\end{proof}

Given a weight $w$ on $\mathbb R^d$, as usual, the weighted Lebesgue space $L^p(w)$ for $1\leq p<\infty$ is defined to be the set of all functions $f$ such that
\begin{equation*}
\big\|f\big\|_{L^p(w)}:=\bigg(\int_{\mathbb R^d}\big|f(x)\big|^pw(x)\,dx\bigg)^{1/p}<\infty.
\end{equation*}
We also denote by $WL^p(w)$ the weighted weak Lebesgue space consisting of all measurable functions $f$ for which
\begin{equation*}
\big\|f\big\|_{WL^p(w)}:=
\sup_{\lambda>0}\lambda\cdot\Big[w\big(\big\{x\in\mathbb R^d:|f(x)|>\lambda\big\}\big)\Big]^{1/p}<\infty.
\end{equation*}

Note that if $\mathcal L=-\Delta$ is the Laplacian on $\mathbb R^d$, then ${\mathcal L}^{-{\alpha}/2}$ is the classical fractional integral operator $I_{\alpha}=(-\Delta)^{-{\alpha}/2}$ of order $\alpha$. Let $0<\alpha<d$ and $1\leq p<d/{\alpha}$. Define $1<q<\infty$ by the relation $1/q=1/p-{\alpha}/d$. It is well known that when $p>1$ and $w\in A_{p,q}$, $I_{\alpha}$ is bounded from $L^p(w^p)$ into $L^q(w^q)$. When $p=1$ and $w\in A_{1,q}$, $I_{\alpha}$ is bounded from $L^1(w)$ into $WL^q(w^q)$ (see \cite{muckenhoupt}). Recently, Bongioanni et al. \cite[Theorem 4]{bong1} established the weighted boundedness
for fractional integral operators $\mathcal I_{\alpha}$ associated to Schr\"odinger operators defined in \eqref{fi}. They showed that the same estimates also hold for weights in the class $A^{\rho,\infty}_{p,q}$, which is larger than Muckenhoupt-Wheeden's class (another proof was later given by Tang in \cite[Theorem 3.8]{tang}). Their results can be summarized as follows:

\begin{thm}[\cite{bong1}]\label{strong}
Let $0<\alpha<d$, $1<p<d/{\alpha}$, $1/q=1/p-{\alpha}/d$ and $w\in A^{\rho,\infty}_{p,q}$. If $V\in RH_s$ with $s\geq d/2$, then the $\mathcal L$-fractional integral operator $\mathcal I_{\alpha}$ is bounded from $L^p(w^p)$ into $L^q(w^q)$.
\end{thm}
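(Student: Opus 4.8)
The plan is to reduce the claim to a pointwise estimate for the integral kernel of $\mathcal I_\alpha$ and then to transplant the classical Muckenhoupt--Wheeden theory onto a covering of $\mathbb R^d$ by ``critical balls''. First I would record a kernel bound with arbitrarily fast decay. Writing $\mathcal I_\alpha f(x)=\int_{\mathbb R^d}K_\alpha(x,y)f(y)\,dy$ with $K_\alpha(x,y)=\int_0^\infty p_t(x,y)\,t^{\alpha/2-1}\,dt$ (legitimate by \eqref{fi} and Fubini), I would insert the bound of Proposition \ref{k}, split the $t$-integral at $t=\rho(x)^2$, and keep the Gaussian factor to control the range $t\ge\rho(x)^2$. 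A short but slightly delicate computation then gives, for every integer $N\ge1$, a constant $C_N>0$ with
\begin{equation*}
\big|K_\alpha(x,y)\big|\le\frac{C_N}{|x-y|^{d-\alpha}}\left(1+\frac{|x-y|}{\rho(x)}\right)^{-N},\qquad x\ne y .
\end{equation*}
Since $|p_t(x,y)|\le h_t(x-y)$ one also has $|\mathcal I_\alpha f|\le C\,I_\alpha(|f|)$, but $I_\alpha$ is bounded only for $A_{p,q}$ weights, so it is precisely the extra decay above that opens up the larger class $A^{\rho,\infty}_{p,q}$.

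Next I would fix $\theta>0$ with $w\in A^{\rho,\theta}_{p,q}$ and take the known covering $\{Q_k=B(x_k,\rho(x_k))\}_k$ of $\mathbb R^d$ from \cite{dziu}, for which each family of dilates $\{2^jQ_k\}_k$ has overlap at most $C\,2^{j\mathfrak N}$ for a fixed $\mathfrak N$; by Lemma \ref{N0}, $\rho(x)\sim\rho(x_k)$ whenever $x\in Q_k$. After choosing a measurable assignment $x\mapsto k(x)$ with $x\in Q_{k(x)}$, I would decompose, for $x\in Q_{k}$ with $k=k(x)$, $\mathbb R^d$ into the shells $U_0=2Q_{k}$ and $U_j=2^{j+1}Q_{k}\setminus 2^jQ_{k}$ $(j\ge1)$ and write $\mathcal I_\alpha f(x)=\sum_{j\ge0}\int_{U_j}K_\alpha(x,y)f(y)\,dy$. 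The term $j=0$ is at most $C\,I_\alpha\!\big(|f|\chi_{2Q_{k}}\big)(x)$; on $2Q_{k}$ the defect $(1+r/\rho(x_{k}))^\theta$ is bounded, so the $A^{\rho,\theta}_{p,q}$ inequality reduces there to the classical $A_{p,q}$ condition with a constant uniform in $k$, and the Muckenhoupt--Wheeden theorem yields $\|I_\alpha(|f|\chi_{2Q_{k}})\|_{L^q(w^q)}\le C\|f\chi_{2Q_{k}}\|_{L^p(w^p)}$. Summing the $q$-th powers over $k$, using that $\{Q_k\}$ covers $\mathbb R^d$, the power inequality $\sum_k a_k^{q/p}\le(\sum_k a_k)^{q/p}$ (valid as $q\ge p$) and the bounded overlap of $\{2Q_k\}_k$, the contribution of $j=0$ is controlled by $C\|f\|_{L^p(w^p)}^q$.

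For $j\ge1$ and $x\in Q_{k}$ one has $|x-y|\sim 2^j\rho(x_{k})\sim 2^j\rho(x)$ on $U_j$, so the kernel bound contributes a factor $2^{-jN}$; H\"older's inequality on $P_j:=2^{j+1}Q_{k}$ together with the $A^{\rho,\theta}_{p,q}$ inequality on $P_j$ (where $1+2^{j+1}\rho(x_{k})/\rho(x_{k})\sim 2^j$) and the scaling relation $1/q=1/p-\alpha/d$ turn $\int_{U_j}|K_\alpha(x,y)||f(y)|\,dy$ into at most $C\,2^{-j(N-\theta)}\|f\chi_{P_j}\|_{L^p(w^p)}\,w^q(P_j)^{-1/q}$, a quantity independent of $x\in Q_{k}$. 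Since $Q_{k}\subset P_j$, integrating over $Q_{k}$ and summing in $j$ by Minkowski's inequality bounds the $L^q(w^q)$-norm over $Q_{k}$ of $\sum_{j\ge1}\int_{U_j}K_\alpha(\cdot,y)f(y)\,dy$ by $C\sum_{j\ge1}2^{-j(N-\theta)}\|f\chi_{P_j}\|_{L^p(w^p)}$. Raising to the power $q$, summing over $k$ via Jensen's inequality in $j$, the power inequality in $k$, and the overlap bound $C2^{j\mathfrak N}$ for $\{2^{j+1}Q_k\}_k$, one reaches $C\sum_{j\ge1}2^{-j(N-\theta-\mathfrak Nq/p)}\|f\|_{L^p(w^p)}^q$. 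Choosing $N>\theta+\mathfrak Nq/p$ makes the geometric series converge, and combining with the estimate for $j=0$ finishes the proof.

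The hard part will be twofold: deriving the kernel estimate valid for \emph{every} exponent $N$ (one must keep the Gaussian, since dropping it leaves only the homogeneous bound $|x-y|^{\alpha-d}$ with no decay), and then making sure that the single exponent $N$ can be taken large enough to absorb \emph{simultaneously} the weight defect $\theta$ and the growth $2^{j\mathfrak N}$ of the overlap constants of the large dilates $2^jQ_k$. This is exactly why the quantitative decay in Proposition \ref{k} is indispensable and the crude bound $|\mathcal I_\alpha f|\le C\,I_\alpha(|f|)$ does not suffice; the localization step for $j=0$ (that $w$ restricted to a critical ball is a genuine $A_{p,q}$ weight with constant independent of the ball) is routine once the covering lemma is in hand.
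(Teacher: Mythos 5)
First, a point of order: the paper does not prove Theorem \ref{strong} at all --- it is quoted from \cite{bong1} (with an alternative proof in \cite{tang}) --- so there is no in-paper argument to compare against. Judged on its own, your proposal is, in outline, the standard proof from that literature: the kernel bound you derive in the first step is exactly Lemma \ref{kernel} of the present paper (Proposition 8 of \cite{bong5}), and the decomposition into a local piece over the critical balls $Q_k=B(x_k,\rho(x_k))$ plus dyadic shells, with a single large exponent $N$ absorbing both the weight defect $2^{j\theta}$ and the overlap growth $2^{j\mathfrak N}$ of the dilated covering, is how \cite{bong1} and \cite{tang} proceed. The bookkeeping in the $j\ge1$ part checks out: $1/q+1/p'=1-\alpha/d$ makes the volume factors cancel, the comparability $\rho(x)\sim\rho(x_k)$ on $Q_k$ from Lemma \ref{N0} turns the kernel decay into $2^{-jN}$, and $N>\theta+\mathfrak N q/p$ is the right threshold for convergence.

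The one place where you are too quick is the $j=0$ term. Knowing that $(1+r/\rho(x_0))^{\theta}\le C$ uniformly for balls $B(x_0,r)$ contained in a fixed dilate of $Q_k$ gives the $A_{p,q}$ inequality only for that restricted family of balls, whereas the Muckenhoupt--Wheeden theorem requires the condition for \emph{all} balls of $\mathbb R^d$. To apply it to $I_\alpha(|f|\chi_{2Q_k})$ on $Q_k$ you need the extension lemma of \cite{bong1}: for each critical ball there exists a genuine global $A_{p,q}$ weight that agrees with $w$ on a dilate of $Q_k$ and whose $A_{p,q}$ constant is bounded uniformly in $k$ (or, equivalently, a localized version of the weighted theory for the truncated fractional integral). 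That construction is a real ingredient and does not follow from the covering lemma alone, so it should not be dismissed as routine; once it is supplied or cited, the rest of your argument is complete.
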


\begin{thm}[\cite{bong1}]\label{weak}
Let $0<\alpha<d$, $p=1$, $q=d/{(d-\alpha)}$ and $w\in A^{\rho,\infty}_{1,q}$. If $V\in RH_s$ with $s\geq d/2$, then the $\mathcal L$-fractional integral operator $\mathcal I_{\alpha}$ is bounded from $L^1(w)$ into $WL^q(w^q)$.
\end{thm}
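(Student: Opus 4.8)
The plan is to prove the endpoint bound directly, by decomposing $\mathcal I_\alpha$ into a \emph{local} and a \emph{global} piece at the critical scale, dominating the global piece by a fractional maximal operator twisted by $\rho$, and reducing the local piece on each critical ball to the classical weighted weak-type inequality of Muckenhoupt and Wheeden. First I would record the pointwise kernel bound: writing $\mathcal K_\alpha(x,y)$ for the kernel of $\mathcal I_\alpha$, formula \eqref{fi} gives $\mathcal K_\alpha(x,y)=\int_0^\infty p_t(x,y)\,t^{\alpha/2-1}\,dt$, and feeding in Proposition \ref{k} (with the integer there taken as large as needed) and cutting the $t$-integral at $t=|x-y|^2$ and at $t=\rho(x)^2$ yields, for every $M>0$,
\[
|\mathcal K_\alpha(x,y)|\le \frac{C_M}{|x-y|^{d-\alpha}}\Big(1+\frac{|x-y|}{\rho(x)}\Big)^{-M}.
\]
Since $|\mathcal I_\alpha f(x)|\le\int|\mathcal K_\alpha(x,y)|\,|f(y)|\,dy$, a monotone-convergence truncation ($0\le f_m:=\min(f,m)\chi_{B(0,m)}\uparrow f$) reduces the proof to $f\ge0$ bounded with compact support, in particular $f\in L^1$. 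Finally, by Lemma \ref{Apq} fix $\theta_0>0$ with $w\in A^{\rho,\theta_0}_{1,q}$, and note the elementary consequence of Lemma \ref{N0} to be used repeatedly: on every ball of radius comparable to (or smaller than) $\rho$ at its center the factor $(1+r/\rho(\cdot))^{\theta_0}$ is bounded, so $w$ behaves on such balls like a classical $A_{1,q}$ weight.

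Now split $\mathcal I_\alpha f=\mathcal I_\alpha^{\mathrm{loc}}f+\mathcal I_\alpha^{\mathrm{glob}}f$ by restricting the $y$-integration to $|x-y|<\rho(x)$, respectively $|x-y|\ge\rho(x)$. On the region $|x-y|\ge\rho(x)$ the decay factor in the kernel is genuinely effective; decomposing dyadically into the shells $2^k\rho(x)\le|x-y|<2^{k+1}\rho(x)$ and choosing $M>\theta_0$ gives
\[
|\mathcal I_\alpha^{\mathrm{glob}}f(x)|\le C\,M_{\rho,\theta_0;\alpha}f(x),\qquad
M_{\rho,\theta;\alpha}f(x):=\sup_{r>0}\Big(1+\frac{r}{\rho(x)}\Big)^{-\theta}\frac{r^\alpha}{|B(x,r)|}\int_{B(x,r)}|f(y)|\,dy.
\]
The weak-type estimate $M_{\rho,\theta_0;\alpha}\colon L^1(w)\to WL^q(w^q)$ is then proved by a Vitali argument: cover the level set by balls $B_x$ with $(1+r_{B_x}/\rho(x))^{-\theta_0}|B_x|^{\alpha/d-1}\int_{B_x}|f|\gtrsim\lambda$ (their radii are bounded because $f\in L^1$), extract a disjoint subfamily $\{B_j\}$ whose $5$-dilates still cover, and estimate $w^q(5B_j)$ using $w\in A^{\rho,\theta_0}_{1,q}$. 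The bookkeeping closes exactly: the identity $1+(\alpha/d-1)q=0$, which is precisely $1/q=1-\alpha/d$, cancels the power of $|B_j|$, while the factor $(1+r_j/\rho)^{\theta_0 q}$ coming from the weight condition cancels the factor $(1+r_j/\rho)^{-\theta_0 q}$ coming from the selection inequality, leaving $w^q(5B_j)\le C\lambda^{-q}\big(\int_{B_j}|f|w\big)^q$. Summing over the disjoint $B_j$ and using $q>1$ (so $\sum a_j^q\le(\sum a_j)^q$) gives $w^q(\{M_{\rho,\theta_0;\alpha}f>\lambda\})\le C\lambda^{-q}\|f\|_{L^1(w)}^q$.

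For the local part, take a standard covering $\mathbb R^d=\bigcup_k B_k$ with $B_k=B(x_k,\rho(x_k))$ whose dilates have the bounded overlap property. By \eqref{com}, $x\in B_k$ forces $\rho(x)\le 2C_0\rho(x_k)$, hence $\{y:|x-y|<\rho(x)\}\subset B_k^{**}:=B(x_k,(2C_0+1)\rho(x_k))$; combined with $|\mathcal K_\alpha(x,y)|\le C|x-y|^{\alpha-d}$ this gives, for $x\in B_k$,
\[
|\mathcal I_\alpha^{\mathrm{loc}}f(x)|\le C\,I_\alpha\big(|f|\chi_{B_k^{**}}\big)(x),
\]
with $I_\alpha$ the classical fractional integral. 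On $B_k^{**}$ the weight $w$ is, uniformly in $k$, a classical $A_{1,q}$ weight (by the remark in the first paragraph, since sub-balls of $B_k^{**}$ have radius comparable to or smaller than $\rho$ at their centers), so, after extending $w|_{B_k^{**}}$ to a global $A_{1,q}$ weight with comparable constant, the classical weighted weak-type inequality (\cite{muckenhoupt}) yields $w^q(\{x\in B_k:I_\alpha(|f|\chi_{B_k^{**}})(x)>\lambda\})\le C\lambda^{-q}\big(\int_{B_k^{**}}|f|w\big)^q$. Summing over $k$, again using $q>1$ and now the bounded overlap of $\{B_k^{**}\}$, bounds $w^q(\{|\mathcal I_\alpha^{\mathrm{loc}}f|>\lambda\})$ by $C\lambda^{-q}\|f\|_{L^1(w)}^q$. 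Adding the local and global contributions gives $\lambda\,w^q(\{|\mathcal I_\alpha f|>\lambda\})^{1/q}\le C\|f\|_{L^1(w)}$, which is the assertion.

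The kernel estimate and the Vitali argument are routine; the step that requires care is the local part --- absorbing the $x$-dependent cutoff $|x-y|<\rho(x)$ into a single critical ball (this is where Lemma \ref{N0} enters) and verifying that $w$, though only in $A^{\rho,\infty}_{1,q}$, is a genuine classical $A_{1,q}$ weight on each such ball, so that the classical theory of \cite{muckenhoupt} may be invoked locally. The global part is precisely where the sharpened heat-kernel bounds forced by $V\in RH_s$ (Proposition \ref{k}) are essential.
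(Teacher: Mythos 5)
Theorem \ref{weak} is one of the results this paper \emph{quotes} from \cite{bong1} (with an alternative proof in \cite{tang}); no proof of it appears in the text, so there is no internal argument to compare yours against. Judged on its own, your proposal is a sound reconstruction along the lines of the original source: the local/global splitting of $\mathcal I_\alpha$ at the critical radius, the domination of the global piece by a $\rho$-twisted fractional maximal operator, and the reduction of the local piece on critical balls to the classical Muckenhoupt--Wheeden endpoint are all correct, and your exponent bookkeeping in the Vitali step does close (the weight condition on $5B_j$ contributes $(1+r_j/\rho)^{\theta_0 q}$, the selection inequality contributes its reciprocal, and $d-(d-\alpha)q=0$ kills the power of $r_j$; disjointness plus $q>1$ then sums the pieces). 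The domination of the global part also works exactly as you say, since the dyadic shell at scale $2^k\rho(x)$ produces $2^{-k(M-\theta_0)}M_{\rho,\theta_0;\alpha}f(x)$ and $M>\theta_0$ suffices. Two ingredients are invoked as black boxes and deserve explicit citation or proof: (i) the covering of $\mathbb R^d$ by critical balls $B(x_k,\rho(x_k))$ whose fixed dilates have bounded overlap (this follows from \eqref{com} and is due to Dziuba\'nski--Zienkiewicz); and (ii) the extension of $w$ restricted to a critical ball to a \emph{global} $A_{1,q}$ weight with uniformly comparable constant --- your verification that $w$ satisfies the classical $A_{1,q}$ condition on all sub-balls of $B_k^{**}$ is correct, but passing from that local condition to a genuine global weight (needed to apply \cite{muckenhoupt}) is a nontrivial lemma; for $A_{1,q}$ it reduces, via $w\in A_{1,q}\Leftrightarrow w^q\in A_1$, to the local-to-global $A_1$ extension lemma of \cite{bong1}. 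With those two lemmas supplied, your argument is complete. For contrast, Tang's proof in \cite{tang} avoids the extension step altogether, working directly with weighted estimates for the maximal operators $M_{\rho,\theta}$ adapted to the classes $A^{\rho,\theta}_{p,q}$; your route is closer to \cite{bong1} and has the advantage of importing the classical endpoint theory wholesale, at the cost of the extension lemma.
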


\subsection{The space $\mathrm{BMO}_{\rho,\infty}(\mathbb R^d)$}
For a locally integrable function $b$ on $\mathbb R^d$ (usually called the \emph{symbol}), we will also consider the commutator operator
\begin{equation}\label{briesz}
[b,\mathcal I_{\alpha}]f(x):=b(x)\cdot \mathcal I_{\alpha}f(x)-\mathcal I_{\alpha}(bf)(x),\quad x\in\mathbb R^d.
\end{equation}
Recently, Bongioanni et al. \cite{bong3} introduced a kind of new spaces $\mathrm{BMO}_{\rho,\infty}(\mathbb R^d)$ defined by
\begin{equation*}
\mathrm{BMO}_{\rho,\infty}(\mathbb R^d):=\bigcup_{\theta>0}\mathrm{BMO}_{\rho,\theta}(\mathbb R^d),
\end{equation*}
where for $0<\theta<\infty$ the space $\mathrm{BMO}_{\rho,\theta}(\mathbb R^d)$ is defined to be the set of all locally integrable functions $b$ satisfying
\begin{equation}\label{BM}
\frac{1}{|B(x_0,r)|}\int_{B(x_0,r)}\big|b(x)-b_{B(x_0,r)}\big|\,dx
\leq C\cdot\left(1+\frac{r}{\rho(x_0)}\right)^{\theta},
\end{equation}
for all balls $B(x_0,r)$ with $x_0\in\mathbb R^d$ and $r>0$, $b_{B(x_0,r)}$ denotes the mean value of $b$ on $B(x_0,r)$; that is,
\begin{equation*}
b_{B(x_0,r)}:=\frac{1}{|B(x_0,r)|}\int_{B(x_0,r)}b(y)\,dy.
\end{equation*}
A norm for $b\in \mathrm{BMO}_{\rho,\theta}(\mathbb R^d)$, denoted by $\|b\|_{\mathrm{BMO}_{\rho,\theta}}$, is given by the infimum of the constants satisfying \eqref{BM}, or equivalently,
\begin{equation*}
\|b\|_{\mathrm{BMO}_{\rho,\theta}}
:=\sup_{B(x_0,r)}\left(1+\frac{r}{\rho(x_0)}\right)^{-\theta}\bigg(\frac{1}{|B(x_0,r)|}\int_{B(x_0,r)}\big|b(x)-b_{B(x_0,r)}\big|\,dx\bigg),
\end{equation*}
where the supremum is taken over all balls $B(x_0,r)$ with $x_0\in\mathbb R^d$ and $r>0$.
With the above definition in mind, one has
\begin{equation*}
\mathrm{BMO}(\mathbb R^d)\subset \mathrm{BMO}_{\rho,\theta_1}(\mathbb R^d)\subset \mathrm{BMO}_{\rho,\theta_2}(\mathbb R^d)
\end{equation*}
for $0<\theta_1<\theta_2<\infty$ by virtue of \eqref{cc}, and hence $\mathrm{BMO}(\mathbb R^d)\subset\mathrm{BMO}_{\rho,\infty}(\mathbb R^d)$. Moreover, the classical BMO space \cite{john} is properly contained in $\mathrm{BMO}_{\rho,\infty}(\mathbb R^d)$ (see \cite{bong2,bong3} for more examples).
We need the following key result for the space $\mathrm{BMO}_{\rho,\theta}(\mathbb R^d)$, which was proved by Tang in \cite[Proposition 4.2]{tang}.
\begin{prop}[\cite{tang}]\label{tangbmo}
Let $b\in\mathrm{BMO}_{\rho,\theta}(\mathbb R^d)$ with $0<\theta<\infty$. Then there exist two positive constants $C_1$ and $C_2$ such that for any given ball $B(x_0,r)$ in $\mathbb R^d$ and for any $\lambda>0$, we have
\begin{equation}\label{tang}
\begin{split}
&\big|\big\{x\in B(x_0,r):|b(x)-b_{B(x_0,r)}|>\lambda\big\}\big|\\
&\leq C_1|B(x_0,r)|
\exp\bigg[-\bigg(1+\frac{r}{\rho(x_0)}\bigg)^{-\theta^{\ast}}\frac{C_2 \lambda}{\|b\|_{\mathrm{BMO}_{\rho,\theta}}}\bigg],
\end{split}
\end{equation}
where $\theta^{\ast}=(N_0+1)\theta$ and $N_0$ is the constant appearing in Lemma \ref{N0}.
\end{prop}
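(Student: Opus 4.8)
The plan is to run the classical John--Nirenberg stopping-time scheme localized to a fixed ball, the one new ingredient being that the comparison inequality \eqref{com} of Lemma \ref{N0} is invoked to replace every ``extra'' factor $(1+\tau/\rho(z))^{\theta}$ that \eqref{BM} produces on an auxiliary subball by a single power $(1+r/\rho(x_0))^{\theta^{\ast}}$ anchored at the center of the ambient ball. By homogeneity one may normalize $\|b\|_{\mathrm{BMO}_{\rho,\theta}}=1$, so that \eqref{BM} holds with constant $1$. Fix $B_0=B(x_0,r)$ and put $\widetilde\Lambda:=c_0\,(1+r/\rho(x_0))^{\theta^{\ast}}$, with $\theta^{\ast}=(N_0+1)\theta$ and $c_0=c_0(C_0,\theta)\ge 1$ to be fixed.

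The decisive first step is a uniformization estimate: for every ball $B=B(z,\tau)\subset B_0$ one has $\frac1{|B|}\int_B|b-b_B|\,dx\le\widetilde\Lambda$. Indeed, since $z\in B_0$ and $\tau\le r$, the left-hand inequality of \eqref{com} with $x=x_0$, $y=z$ (together with $|x_0-z|<r$ and the monotonicity of $t\mapsto(1+t)^{-N_0}$) gives $\rho(z)\ge C_0^{-1}(1+r/\rho(x_0))^{-N_0}\rho(x_0)$, hence $1+\tau/\rho(z)\le 1+C_0\,\tfrac{r}{\rho(x_0)}\,(1+\tfrac{r}{\rho(x_0)})^{N_0}\le 2C_0\,(1+\tfrac{r}{\rho(x_0)})^{N_0+1}$; raising to the power $\theta$ and applying \eqref{BM} with constant $1$ gives the estimate with $c_0=(2C_0)^{\theta}$. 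This is the only genuine obstacle relative to the ordinary John--Nirenberg inequality, and it is exactly what forces the exponent $\theta$ to be replaced by $\theta^{\ast}=(N_0+1)\theta$.

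Granting this, I would fix $\beta>1$ and iterate a Calder\'on--Zygmund decomposition inside $B_0$ at the single height $\beta\widetilde\Lambda$ (working with the dyadic subcubes of a cube comparable to $B_0$, or with a ball-adapted stopping time; passing between averages over a cube and over a comparable ball costs only dimensional constants). The first stage yields pairwise disjoint stopping balls $\{B_{1,j}\}\subset B_0$ with $|b-b_{B_0}|\le\beta\widetilde\Lambda$ a.e.\ off $\bigcup_jB_{1,j}$, with $\sum_j|B_{1,j}|\le\beta^{-1}|B_0|$ (here $\frac1{|B_0|}\int_{B_0}|b-b_{B_0}|\le\widetilde\Lambda$ is used), and with $|b_{B_{1,j}}-b_{B_0}|\le c_d\beta\widetilde\Lambda$ for a dimensional constant $c_d\ge1$. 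Repeating the construction inside each $B_{1,j}$ for the function $b-b_{B_{1,j}}$, \emph{again at height $\beta\widetilde\Lambda$} --- legitimate precisely by the uniformization step, as every descendant remains inside $B_0$ --- and iterating, one gets for each $m\ge1$ a family $\mathcal G_m$ of subballs of $B_0$ with $\sum_{B\in\mathcal G_m}|B|\le\beta^{-m}|B_0|$, and telescoping along any descending chain $B_0\supset B^{(1)}\supset\cdots\supset B^{(m)}$ yields $|b(x)-b_{B_0}|\le\beta\widetilde\Lambda+m\,c_d\beta\widetilde\Lambda\le(m+1)c_d\beta\widetilde\Lambda$ for a.e.\ $x\in B_0\setminus\bigcup\mathcal G_{m+1}$. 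Hence $\big|\{x\in B_0:|b(x)-b_{B_0}|>(m+1)c_d\beta\widetilde\Lambda\}\big|\le\beta^{-(m+1)}|B_0|$ for every $m\ge0$.

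To finish, given $\lambda>0$ one distinguishes cases. If $\lambda\le c_d\beta\widetilde\Lambda$ the bound is trivial, since with $C_1=\beta$ and $C_2=(\ln\beta)/(c_d\beta\,c_0)$ the right-hand side of \eqref{tang} is $\ge\beta|B_0|e^{-\ln\beta}=|B_0|$, which already dominates the left-hand side. If $\lambda>c_d\beta\widetilde\Lambda$, choose the integer $m\ge0$ with $(m+1)c_d\beta\widetilde\Lambda\le\lambda<(m+2)c_d\beta\widetilde\Lambda$; then the level-set bound above and $\beta^{-(m+1)}\le\beta\,e^{-(\ln\beta)\lambda/(c_d\beta\widetilde\Lambda)}$ give $\big|\{x\in B_0:|b(x)-b_{B_0}|>\lambda\}\big|\le\beta|B_0|\exp\!\big(-(\ln\beta)\lambda/(c_d\beta\widetilde\Lambda)\big)$. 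Inserting $\widetilde\Lambda^{-1}=c_0^{-1}(1+r/\rho(x_0))^{-\theta^{\ast}}$ and, for general $b$, replacing $\lambda$ by $\lambda/\|b\|_{\mathrm{BMO}_{\rho,\theta}}$, one obtains exactly \eqref{tang} with $C_1=\beta$ and $C_2=(\ln\beta)/(c_d\beta\,c_0)$.
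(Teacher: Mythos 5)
Your argument is correct. Note first that the paper itself gives no proof of this proposition: it is quoted verbatim from Tang's paper (Proposition 4.2 there), so you are supplying a self-contained argument where the text only offers a citation. Your proof is the natural one, and it isolates the right new ingredient: the uniformization step showing that every subball $B(z,\tau)\subset B(x_0,r)$ has mean oscillation at most $c_0(1+r/\rho(x_0))^{(N_0+1)\theta}$, which follows from the lower bound in \eqref{com} exactly as you compute, and which explains why the exponent degrades from $\theta$ to $\theta^{\ast}=(N_0+1)\theta$. Once the oscillation is uniformized over all subballs, the Calder\'on--Zygmund iteration at the single height $\beta\widetilde\Lambda$ and the final case analysis in $\lambda$ are the standard John--Nirenberg scheme, and your constant bookkeeping ($C_1=\beta$, $C_2=(\ln\beta)/(c_d\beta c_0)$) checks out, including the trivial regime $\lambda\le c_d\beta\widetilde\Lambda$.

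Two small points you should make explicit in a full writeup, though neither is a gap. First, if you run the dyadic decomposition on a cube $Q_0\supset B_0$, the descendants live in a dilate $cB_0$ rather than in $B_0$ itself, so the uniformization estimate must be proved for balls (or cubes) contained in $cB_0$; the same computation with $\tau\le cr$ and $|x_0-z|<cr$ gives it, at the cost of replacing $2C_0$ by a dimensional multiple, since $1+cr/\rho(x_0)\le c(1+r/\rho(x_0))$. Second, the iteration controls $|b-b_{Q_0}|$ on $Q_0$, whereas \eqref{tang} concerns $|b-b_{B_0}|$ on $B_0$; the passage uses $|b_{Q_0}-b_{B_0}|\le C_d\widetilde\Lambda$, which merely shifts $\lambda$ by a fixed multiple of $\widetilde\Lambda$ and is absorbed into $C_1,C_2$. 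You allude to both of these ("costs only dimensional constants"), and they are routine.
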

As a consequence of Proposition \ref{tangbmo} and Lemma \ref{comparelem}, we have the following result:
\begin{prop}\label{wangbmo}
Let $b\in\mathrm{BMO}_{\rho,\theta}(\mathbb R^d)$ with $0<\theta<\infty$ and $w\in A^{\rho,\infty}_p$ with $1\leq p<\infty$. Then there exist positive constants $C_1,C_2$ and $\eta>0$ such that for any given ball $B(x_0,r)$ in $\mathbb R^d$ and for any $\lambda>0$, we have
\begin{equation}\label{wang}
\begin{split}
&w\big(\big\{x\in B(x_0,r):|b(x)-b_{B(x_0,r)}|>\lambda\big\}\big)\\
&\leq C_1w\big(B(x_0,r)\big)
\exp\bigg[-\bigg(1+\frac{r}{\rho(x_0)}\bigg)^{-\theta^{\ast}}\frac{C_2 \lambda}{\|b\|_{\mathrm{BMO}_{\rho,\theta}}}\bigg]
\left(1+\frac{r}{\rho(x_0)}\right)^{\eta},
\end{split}
\end{equation}
where $\theta^{\ast}=(N_0+1)\theta$ and $N_0$ is the constant appearing in Lemma \ref{N0}.
\end{prop}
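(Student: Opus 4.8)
The plan is to combine the distributional estimate of Proposition~\ref{tangbmo} with the comparison inequality of Lemma~\ref{comparelem}. First, since $w\in A^{\rho,\infty}_p=\bigcup_{\theta'>0}A^{\rho,\theta'}_p$, I would fix some $\theta'>0$ with $w\in A^{\rho,\theta'}_p$, so that Lemma~\ref{comparelem} applies and furnishes constants $\delta>0$, $\eta>0$ and $C>0$ (depending on $w$, $p$, $\theta'$) such that $w(E)/w(B)\le C(|E|/|B|)^{\delta}(1+r/\rho(x_0))^{\eta}$ for every measurable subset $E$ of a ball $B=B(x_0,r)$.

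Next, fix a ball $B=B(x_0,r)$ and $\lambda>0$, and set $E_\lambda:=\{x\in B(x_0,r):|b(x)-b_{B(x_0,r)}|>\lambda\}$, which is a measurable subset of $B$. Applying Lemma~\ref{comparelem} to $E_\lambda$ gives
\begin{equation*}
w\big(E_\lambda\big)\le C\,w(B)\left(\frac{|E_\lambda|}{|B|}\right)^{\delta}\left(1+\frac{r}{\rho(x_0)}\right)^{\eta}.
\end{equation*}
Then I would insert the bound for $|E_\lambda|/|B|$ supplied by Proposition~\ref{tangbmo}, namely
\begin{equation*}
\frac{|E_\lambda|}{|B|}\le C_1\exp\bigg[-\bigg(1+\frac{r}{\rho(x_0)}\bigg)^{-\theta^{\ast}}\frac{C_2\lambda}{\|b\|_{\mathrm{BMO}_{\rho,\theta}}}\bigg],
\end{equation*}
raised to the power $\delta>0$. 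Since $\delta$ is a fixed positive constant, this merely replaces $C_1$ by $C_1^{\delta}$ and $C_2$ by $\delta C_2$ inside the exponential, while leaving the exponent $\theta^{\ast}=(N_0+1)\theta$ of the factor $1+r/\rho(x_0)$ unchanged. Combining the last two displays and absorbing $C\,C_1^{\delta}$ into a new constant $C_1$ and $\delta C_2$ into a new constant $C_2$ yields precisely the claimed inequality~\eqref{wang}.

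There is no serious obstacle here: the argument is a direct concatenation of two previously established results. The only point that requires a moment's care is the order of quantifiers — one must first extract an exponent $\theta'$ for which $w\in A^{\rho,\theta'}_p$ in order to invoke Lemma~\ref{comparelem}, and then observe that the resulting $\delta$ and $\eta$ are independent of the ball $B$ and of $\lambda$, so that the final relabelling of constants is legitimate and uniform in $B$ and $\lambda$.
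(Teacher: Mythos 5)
Your proposal is correct and is exactly the argument the paper intends: the paper states Proposition \ref{wangbmo} as "a consequence of Proposition \ref{tangbmo} and Lemma \ref{comparelem}" without further detail, and your concatenation — applying \eqref{compare} to the level set $E_\lambda$ and then inserting the John--Nirenberg-type bound \eqref{tang} raised to the power $\delta$, which only rescales $C_1$ and $C_2$ while leaving $\theta^{\ast}$ and the factor $(1+r/\rho(x_0))^{\eta}$ intact — is precisely that consequence, with the quantifier order handled correctly.
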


Notice that if $\mathcal L=-\Delta$ is the Laplacian operator on $\mathbb R^d$, then $[b,\mathcal I_{\alpha}]$ is just the commutator $[b,I_{\alpha}]$ of the classical fractional integrals. It is well known that when $b\in \mathrm{BMO}(\mathbb R^d)$, the commutator $[b,I_{\alpha}]$ is bounded from $L^p(w^p)$ ($1<p<d/{\alpha}$) into $L^q(w^q)$ whenever $1/q=1/p-{\alpha}/d$ and $w\in A_{p,q}$.This corresponds to the norm inequalities satisfied by $I_{\alpha}$. In \cite[Theorem 4.4]{tang}, Tang obtained weighted strong-type estimate for the commutator $[b,\mathcal I_{\alpha}]$ of fractional integrals associated to Schr\"odinger operators, when $b$ in a larger space than $\mathrm{BMO}(\mathbb R^d)$, that is the space $\mathrm{BMO}_{\rho,\infty}(\mathbb R^d)$. More precisely, he gave the following weighted result (see also \cite[Theorem 3.5]{bui}).
\begin{thm}[\cite{tang}]\label{cstrong}
Let $0<\alpha<d$, $1<p<d/{\alpha}$, $1/q=1/p-{\alpha}/d$ and $w\in A^{\rho,\infty}_{p,q}$. If $V\in RH_s$ with $s\geq d/2$, then the commutator operator $[b,\mathcal I_{\alpha}]$ is bounded from $L^p(w^p)$ into $L^q(w^q)$, whenever $b$ belongs to $\mathrm{BMO}_{\rho,\infty}(\mathbb R^d)$.
\end{thm}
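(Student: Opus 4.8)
The plan is to prove Theorem \ref{cstrong} by the Fefferman--Stein sharp maximal function method, adapted to the critical radius function $\rho$. Since $\mathrm{BMO}_{\rho,\infty}(\mathbb R^d)=\bigcup_{\theta>0}\mathrm{BMO}_{\rho,\theta}(\mathbb R^d)$, I would fix $\theta_0>0$ with $b\in\mathrm{BMO}_{\rho,\theta_0}(\mathbb R^d)$; because $[\mu b,\mathcal I_\alpha]=\mu[b,\mathcal I_\alpha]$ there is no loss in assuming $\|b\|_{\mathrm{BMO}_{\rho,\theta_0}}=1$, and by the standard truncation $b\mapsto\max\{\min\{b,N\},-N\}$ (whose $\mathrm{BMO}_{\rho,\theta_0}$-norms stay bounded, as the truncation is $1$-Lipschitz) together with Fatou's lemma and the density of bounded compactly supported functions in $L^p(w^p)$, it suffices to treat $b\in L^\infty$ and such $f$, for which $[b,\mathcal I_\alpha]f\in L^q(w^q)$ a priori. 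For a parameter $\eta>0$ I would use the localized sharp maximal operator
\[
M^{\#}_{\rho,\eta}g(x):=\sup_{x\in B(x_0,r)}\Bigl(1+\tfrac{r}{\rho(x_0)}\Bigr)^{-\eta}\frac{1}{|B(x_0,r)|}\int_{B(x_0,r)}\bigl|g(y)-g_{B(x_0,r)}\bigr|\,dy,
\]
and the localized fractional maximal operators $M_{\beta,\rho,\eta}$, $0\le\beta<d$, defined analogously with $|B(x_0,r)|^{-(1-\beta/d)}\int_{B(x_0,r)}|f|$ in place of the averaged oscillation, writing $M_{\rho,\eta}:=M_{0,\rho,\eta}$. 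The following facts from the weighted theory of the $A^{\rho,\infty}$ classes are needed, all in the same circle of ideas as Theorems \ref{strong}--\ref{weak}: for $\eta$ large enough, $M_{\rho,\eta}$ is bounded on $L^s(\nu)$ whenever $\nu\in A^{\rho,\infty}_s$, and $M_{\beta,\rho,\eta}$ is bounded from $L^{p_1}(\nu^{p_1})$ into $L^{q_1}(\nu^{q_1})$ whenever $\nu\in A^{\rho,\infty}_{p_1,q_1}$ and $1/q_1=1/p_1-\beta/d$; moreover the Fefferman--Stein inequality $\|g\|_{L^q(w^q)}\le C\|M^{\#}_{\rho,\eta}g\|_{L^q(w^q)}$ holds, which applies here since $w\in A^{\rho,\theta}_{p,q}$ forces $w^q\in A^{\rho,\infty}_{1+q/p'}$ by Lemma \ref{Apq}.

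The core of the proof is the pointwise estimate: there are $\eta>0$ and $r>1$, with $r$ as close to $1$ as desired, such that
\[
M^{\#}_{\rho,\eta}\bigl([b,\mathcal I_\alpha]f\bigr)(x)\le C\Bigl(M_{\rho,\eta}\bigl(|\mathcal I_\alpha f|^{r}\bigr)(x)^{1/r}+M_{\alpha r,\rho,\eta}\bigl(|f|^{r}\bigr)(x)^{1/r}\Bigr).
\]
To obtain it, fix $B=B(x_0,r_B)$ and $x\in B$, split
\[
[b,\mathcal I_\alpha]f=(b-b_{2B})\,\mathcal I_\alpha f-\mathcal I_\alpha\bigl((b-b_{2B})f\chi_{2B}\bigr)-\mathcal I_\alpha\bigl((b-b_{2B})f\chi_{(2B)^c}\bigr),
\]
subtract the constant $c_B:=\mathcal I_\alpha((b-b_{2B})f\chi_{(2B)^c})(x_0)$, and average $|\,\cdot-c_B|$ over $B$. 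The first term is controlled by H\"older's inequality with exponents $r,r'$, the distributional John--Nirenberg estimate of Proposition \ref{tangbmo} (which bounds $(\frac{1}{|2B|}\int_{2B}|b-b_{2B}|^{r'})^{1/r'}$ by $C(1+r_B/\rho(x_0))^{\theta^{*}}$, $\theta^{*}=(N_0+1)\theta_0$), and the elementary localization $\frac{1}{|B|}\int_B h\le(1+r_B/\rho(x_0))^{\eta}M_{\rho,\eta}(h)(x)$. The second term is controlled by Kolmogorov's inequality applied to $\mathcal I_\alpha\colon L^1\to L^{d/(d-\alpha),\infty}$ (valid since $0\le\mathcal K_\alpha(x,y)\le C|x-y|^{\alpha-d}$), again followed by H\"older and Proposition \ref{tangbmo}. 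The third term is controlled by the annular decomposition $(2B)^c=\bigcup_{k\ge1}(2^{k+1}B\setminus2^{k}B)$, the smoothness/decay estimate for the kernel of $\mathcal I_\alpha$ --- for some $\delta>0$ and every $N$, $|\mathcal K_\alpha(y,z)-\mathcal K_\alpha(x_0,z)|\le C_N\frac{|y-x_0|^{\delta}}{|x_0-z|^{d-\alpha+\delta}}(1+|x_0-z|/\rho(x_0))^{-N}$ whenever $|y-x_0|\le\frac12|x_0-z|$ --- together with H\"older, Proposition \ref{tangbmo}, the telescoping bound $|b_{2^{k+1}B}-b_{2B}|\le Ck(1+2^{k}r_B/\rho(x_0))^{\theta^{*}}$, and \eqref{com2}; the resulting series $\sum_{k\ge1}2^{-k\delta}k(1+2^{k}r_B/\rho(x_0))^{\theta^{*}-N}$ converges once $N>\theta^{*}$. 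Every power of $(1+r_B/\rho(x_0))$ that appears is absorbed into $M^{\#}_{\rho,\eta}$ by taking $\eta$ large, while each averaged factor is recognized as $M_{\rho,\eta}(|\mathcal I_\alpha f|^{r})(x)^{1/r}$ or $M_{\alpha r,\rho,\eta}(|f|^{r})(x)^{1/r}$.

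It then remains to integrate. I would choose $r>1$ so close to $1$ that simultaneously $r<p$, $\alpha r<d$, $q/r>1+q/p'$, and $w^{r}\in A^{\rho,\infty}_{p/r,\,q/r}$; the last is the stability of the $A^{\rho,\infty}_{p,q}$ condition under a small power, a routine consequence of Lemmas \ref{rh} and \ref{Apq} (both $w^q$ and $w^{-p'}$ lie in $A^{\rho,\infty}$ classes that self-improve, which lets the exponents be moved slightly). Since $w^q\in A^{\rho,\infty}_{1+q/p'}\subset A^{\rho,\infty}_{q/r}$, the operator $M_{\rho,\eta}$ is bounded on $L^{q/r}(w^q)$, so by Theorem \ref{strong},
\[
\bigl\|M_{\rho,\eta}(|\mathcal I_\alpha f|^{r})^{1/r}\bigr\|_{L^q(w^q)}\le C\bigl\|\mathcal I_\alpha f\bigr\|_{L^q(w^q)}\le C\bigl\|f\bigr\|_{L^p(w^p)}.
\]
With $\beta=\alpha r$, $p_1=p/r$, $q_1=q/r$ one has $1/q_1=1/p_1-\beta/d$, $1<p_1<q_1$, $p_1<d/\beta$, and $w^{r}\in A^{\rho,\infty}_{p_1,q_1}$, so the localized fractional maximal bound gives $\|M_{\alpha r,\rho,\eta}(|f|^{r})^{1/r}\|_{L^q(w^q)}\le C\|f\|_{L^p(w^p)}$ as well. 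Feeding these two estimates and the pointwise inequality into the Fefferman--Stein inequality yields $\|[b,\mathcal I_\alpha]f\|_{L^q(w^q)}\le C\|f\|_{L^p(w^p)}$; undoing the normalization restores a factor $\|b\|_{\mathrm{BMO}_{\rho,\theta_0}}$, and removing the truncation of $b$ by Fatou's lemma completes the argument.

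The step I expect to be the genuine obstacle is the third (far-field) term of the pointwise estimate, namely the H\"older-continuity estimate for the kernel of the $\mathcal L$-Riesz potential carrying the extra factor $(1+|x_0-z|/\rho(x_0))^{-N}$. Proposition \ref{k} only bounds the heat kernel $p_t(x,y)$, so one must differentiate the subordination formula \eqref{fi} and invoke gradient (or, for $d/2\le s<d$, H\"older) estimates for $p_t(x,y)$ valid under $V\in RH_s$; this is where the reverse H\"older hypothesis on the potential genuinely enters and where the value of $\delta>0$ above is determined. The remaining potential difficulties --- the a priori membership $[b,\mathcal I_\alpha]f\in L^q(w^q)$ that licenses the Fefferman--Stein inequality, and the selection of a single $r>1$ meeting all four constraints --- are routine given the self-improvement properties of the $A^{\rho,\infty}_{p,q}$ classes recorded in Lemmas \ref{rh}--\ref{Apq}.
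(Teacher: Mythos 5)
This paper does not prove Theorem \ref{cstrong}: it is quoted from \cite{tang} (Theorem 4.4; see also \cite{bui}), and your localized sharp-maximal-function argument is essentially the proof given there, with the kernel regularity estimate you flag as the main obstacle being exactly Lemma \ref{kernel2} (from \cite{bong5}). The remaining machinery you invoke --- the weighted bounds for $M_{\rho,\eta}$ and $M_{\beta,\rho,\eta}$ over $A^{\rho,\infty}_{s}$ and $A^{\rho,\infty}_{p_1,q_1}$ weights, and the Fefferman--Stein inequality for $M^{\#}_{\rho,\eta}$ --- is likewise established in \cite{bong1,bong5,tang}, so your outline is sound, though as written it is a reduction to those results rather than a self-contained proof.
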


In this paper, firstly, we will define several classes of weighted Morrey spaces related to certain nonnegative potentials satisfying appropriate reverse H\"older inequality. Secondly, we establish weighted boundedness of fractional integrals $\mathcal I_{\alpha}$ associated to $\mathcal L$ on these new spaces. Finally, we also study the continuity property for the commutators $[b,\mathcal I_{\alpha}]$ of fractional integrals with the new $\mathrm{BMO}$ functions defined above.

Throughout this paper $C$ denotes a positive constant not necessarily the same at each occurrence, and a subscript is added when we wish to make
clear its dependence on the parameter in the subscript. We also use $a\approx b$ to denote the equivalence of $a$ and $b$; that is, there exist two positive constants $C_1$, $C_2$ independent of $a,b$ such that $C_1a\leq b\leq C_2a$.

\section{our main results}
In this section, we introduce some types of weighted Morrey spaces related to the nonnegative potential $V$ and then give our main results.
\begin{defin}
Let $1\leq p<\infty$, $0\leq\kappa<1$ and $\mu,\nu$ be two weights on $\mathbb R^d$. For given $0<\theta<\infty$, the weighted Morrey space $L^{p,\kappa}_{\rho,\theta}(\mu,\nu)$ is defined to be the set of all $p$-locally integrable functions $f$ on $\mathbb R^d$ such that
\begin{equation}\label{morrey1}
\bigg(\frac{1}{\nu(B)^{\kappa}}\int_B\big|f(x)\big|^p\mu(x)\,dx\bigg)^{1/p}
\leq C\cdot\left(1+\frac{r}{\rho(x_0)}\right)^{\theta}
\end{equation}
for every ball $B=B(x_0,r)$ in $\mathbb R^d$. A norm for $f\in L^{p,\kappa}_{\rho,\theta}(\mu,\nu)$, denoted by $\|f\|_{L^{p,\kappa}_{\rho,\theta}(\mu,\nu)}$, is given by the infimum of the constants in \eqref{morrey1}, or equivalently,
\begin{equation*}
\big\|f\big\|_{L^{p,\kappa}_{\rho,\theta}(\mu,\nu)}:=\sup_B\left(1+\frac{r}{\rho(x_0)}\right)^{-\theta}
\bigg(\frac{1}{\nu(B)^{\kappa}}\int_B\big|f(x)\big|^p\mu(x)\,dx\bigg)^{1/p}
<\infty,
\end{equation*}
where the supremum is taken over all balls $B$ in $\mathbb R^d$, $x_0$ and $r$ denote the center and radius of $B$ respectively. Define
\begin{equation*}
L^{p,\kappa}_{\rho,\infty}(\mu,\nu):=\bigcup_{\theta>0}L^{p,\kappa}_{\rho,\theta}(\mu,\nu).
\end{equation*}
If $\mu=\nu=w$, then we denote $L^{p,\kappa}_{\rho,\theta}(w)$ and $L^{p,\kappa}_{\rho,\infty}(w)$ for simplicity.
\end{defin}
Note that this definition does not coincide with the one given in \cite{pan} (see also \cite{tang2} for the unweighted case), but in view of the space $\mathrm{BMO}_{\rho,\infty}(\mathbb R^d)$ and the class $A^{\rho,\infty}_{p,q}$ defined above it is more natural in our setting. Obviously, if we take $\theta=0$ or $V\equiv0$, then this new space is just the familiar weighted Morrey space $L^{p,\kappa}(\mu,\nu)$ (or $L^{p,\kappa}(w)$), which was first introduced by Komori and Shirai in \cite{komori} (see also \cite{wang1}).
\begin{defin}
Let $1\leq p<\infty$, $0\leq\kappa<1$ and $w$ be a weight on $\mathbb R^d$. For given $0<\theta<\infty$, the weighted weak Morrey space $WL^{p,\kappa}_{\rho,\theta}(w)$ is defined to be the set of all measurable functions $f$ on $\mathbb R^d$ such that
\begin{equation*}
\frac{1}{w(B)^{\kappa/p}}\sup_{\lambda>0}\lambda\cdot\Big[w\big(\big\{x\in B:|f(x)|>\lambda\big\}\big)\Big]^{1/p}
\leq C\cdot\left(1+\frac{r}{\rho(x_0)}\right)^{\theta}
\end{equation*}
for every ball $B=B(x_0,r)$ in $\mathbb R^d$, or equivalently,
\begin{equation*}
\big\|f\big\|_{WL^{p,\kappa}_{\rho,\theta}(w)}:=\sup_B\left(1+\frac{r}{\rho(x_0)}\right)^{-\theta}\frac{1}{w(B)^{\kappa/p}}
\sup_{\lambda>0}\lambda\cdot \Big[w\big(\big\{x\in B:|f(x)|>\lambda\big\}\big)\Big]^{1/p}<\infty.
\end{equation*}
Correspondingly, we define
\begin{equation*}
WL^{p,\kappa}_{\rho,\infty}(w):=\bigcup_{\theta>0}WL^{p,\kappa}_{\rho,\theta}(w).
\end{equation*}
\end{defin}
Clearly, if we take $\theta=0$ or $V\equiv0$, then this space is just the weighted weak Morrey space $WL^{p,\kappa}(w)$ (see \cite{wang2}). According to the above definitions, one has
\begin{equation*}
\begin{cases}
L^{p,\kappa}(\mu,\nu)\subset L^{p,\kappa}_{\rho,\theta_1}(\mu,\nu)\subset L^{p,\kappa}_{\rho,\theta_2}(\mu,\nu);&\\
WL^{p,\kappa}(w)\subset WL^{p,\kappa}_{\rho,\theta_1}(w)\subset WL^{p,\kappa}_{\rho,\theta_2}(w).&
\end{cases}
\end{equation*}
for $0<\theta_1<\theta_2<\infty$. Hence $L^{p,\kappa}(\mu,\nu)\subset L^{p,\kappa}_{\rho,\infty}(\mu,\nu)$
(in particular $L^{p,\kappa}(w)\subset L^{p,\kappa}_{\rho,\infty}(w)$) and $WL^{p,\kappa}(w)\subset WL^{p,\kappa}_{\rho,\infty}(w)$ for $(p,\kappa)\in[1,\infty)\times[0,1)$.

The space $L^{p,\kappa}_{\rho,\theta}(w)$ (or $WL^{p,\kappa}_{\rho,\theta}(w)$) could be viewed as an extension of weighted (or weak) Lebesgue space (when $\kappa=\theta=0$). Naturally, one may ask the question whether the above conclusions (i.e., Theorems \ref{strong} and \ref{weak} as well as Theorem \ref{cstrong}) still hold if replacing the weighted Lebesgue spaces by the weighted Morrey spaces. In this work, we give a positive answer to this question. We now state our main results as follows.

For weighted strong-type and weak-type estimates of fractional integrals associated to $\mathcal L$ on $L^{p,\kappa}_{\rho,\infty}(w^p,w^q)$, where $w$ belongs to the new classes of weights, we have
\begin{thm}\label{mainthm:1}
Let $0<\alpha<d$, $1<p<d/{\alpha}$, $1/q=1/p-{\alpha}/d$ and $w\in A^{\rho,\infty}_{p,q}$. If $V\in RH_s$ with $s\geq d/2$ and $0<\kappa<p/q$, then the $\mathcal L$-fractional integral operator $\mathcal I_{\alpha}$ is bounded from $L^{p,\kappa}_{\rho,\infty}(w^p,w^q)$ into $L^{q,{(\kappa q)}/p}_{\rho,\infty}(w^q)$.
\end{thm}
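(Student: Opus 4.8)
\textit{Proof proposal.} The plan is to adapt the classical Komori--Shirai ball-splitting argument to the Schr\"odinger setting, carrying along the factors $(1+r/\rho(x_0))^{\theta}$ at each step. Fix a ball $B=B(x_0,r)$ and write $f=f_1+f_2$ with $f_1=f\chi_{2B}$ and $f_2=f\chi_{\mathbb R^d\setminus 2B}$. For the local piece I would invoke the weighted strong-type bound of Theorem~\ref{strong} to get $\|\mathcal I_{\alpha}f_1\|_{L^q(w^q)}\le C\|f_1\|_{L^p(w^p)}=C\big(\int_{2B}|f|^pw^p\big)^{1/p}$, and then bound the right-hand side using the defining inequality \eqref{morrey1} of $L^{p,\kappa}_{\rho,\theta}(w^p,w^q)$ over the ball $2B$, picking up a factor $(1+2r/\rho(x_0))^{\theta}$ and $w^q(2B)^{\kappa/p}$. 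To pass from $2B$ back to $B$ in the weight one needs a doubling-type inequality $w^q(2B)\le C(1+r/\rho(x_0))^{\eta'}w^q(B)$; this follows from a short H\"older argument once we know (via Lemma~\ref{Apq}) that $w^q\in A^{\rho,\infty}_t$ for a suitable $t$, together with \eqref{2rx}. Dividing by $w^q(B)^{(\kappa q)/(pq)}=w^q(B)^{\kappa/p}$ then shows that the local piece contributes at most $C\|f\|_{L^{p,\kappa}_{\rho,\theta}(w^p,w^q)}(1+r/\rho(x_0))^{\theta_1}$ for some $\theta_1$.

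The main work is the global piece. I would first record the pointwise kernel bound for $\mathcal I_{\alpha}$: integrating the estimate of Proposition~\ref{k} in $t$ against $t^{\alpha/2-1}$ yields, for every integer $N\ge 1$, a constant $C_N$ with $|K_{\alpha}(x,y)|\le C_N|x-y|^{\alpha-d}\big(1+|x-y|/\rho(x)\big)^{-N}$. Then for $x\in B$ I decompose $\mathbb R^d\setminus 2B$ into the annuli $2^{j+1}B\setminus 2^jB$, $j\ge 1$, on which $|x-y|\approx 2^jr$, and use Lemma~\ref{N0} in the form \eqref{com2} (with the roles of $x$ and $x_0$ interchanged) to replace $\rho(x)$ by $\rho(x_0)$ at the cost of a factor $(1+r/\rho(x_0))^{NN_0/(N_0+1)}$. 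Applying H\"older's inequality with $w^p$ on each annulus---using \eqref{morrey1} for the $|f|^pw^p$ factor and the $A^{\rho,\theta_0}_{p,q}$ condition for the $w^{-p'}$ factor, where the relation $1/p'+1/q=1-\alpha/d$ makes the powers of $|2^{j+1}B|$ cancel the factor $(2^jr)^{\alpha-d}$ coming from the kernel---I arrive at an estimate of the form
\[
|\mathcal I_{\alpha}f_2(x)|\le C\|f\|_{L^{p,\kappa}_{\rho,\theta}(w^p,w^q)}\Big(1+\frac{r}{\rho(x_0)}\Big)^{\frac{NN_0}{N_0+1}}\sum_{j\ge 1}\Big(1+\frac{2^{j+1}r}{\rho(x_0)}\Big)^{\theta+\theta_0-N}w^q(2^{j+1}B)^{\kappa/p-1/q}.
\]

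Here the hypothesis $0<\kappa<p/q$ is crucial: it forces $\kappa/p-1/q<0$, so the exponent on $w^q(2^{j+1}B)$ is negative and the \emph{reverse doubling} of $w^q$ can be exploited. Since $w^q\in A^{\rho,\infty}_t$, Lemma~\ref{comparelem} (applied to the pair $B\subset 2^{j+1}B$) gives $w^q(B)/w^q(2^{j+1}B)\le C\,2^{-(j+1)d\delta}\big(1+2^{j+1}r/\rho(x_0)\big)^{\eta}$, hence $w^q(2^{j+1}B)^{\kappa/p-1/q}\le C\,2^{-(j+1)d\delta(1/q-\kappa/p)}\big(1+2^{j+1}r/\rho(x_0)\big)^{\eta(1/q-\kappa/p)}w^q(B)^{\kappa/p-1/q}$. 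Choosing $N$ large enough that the total power of $\big(1+2^{j+1}r/\rho(x_0)\big)$ inside the sum is nonpositive, each such factor is $\le 1$ and the series converges by the geometric factor $2^{-(j+1)d\delta(1/q-\kappa/p)}$ alone, leaving $|\mathcal I_{\alpha}f_2(x)|\le C\|f\|_{L^{p,\kappa}_{\rho,\theta}(w^p,w^q)}(1+r/\rho(x_0))^{NN_0/(N_0+1)}w^q(B)^{\kappa/p-1/q}$ uniformly for $x\in B$. Integrating this pointwise bound, $\big(\int_B|\mathcal I_{\alpha}f_2|^qw^q\big)^{1/q}\le C\|f\|_{L^{p,\kappa}_{\rho,\theta}(w^p,w^q)}(1+r/\rho(x_0))^{NN_0/(N_0+1)}w^q(B)^{\kappa/p}$, which, after dividing by $w^q(B)^{(\kappa q)/(pq)}=w^q(B)^{\kappa/p}$, is exactly what the norm of $L^{q,(\kappa q)/p}_{\rho,\theta_2}(w^q)$ requires with $\theta_2=NN_0/(N_0+1)$.

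Adding the local and global contributions and taking the supremum over all balls $B$ then gives $\mathcal I_{\alpha}f\in L^{q,(\kappa q)/p}_{\rho,\theta^{\ast}}(w^q)$ with $\theta^{\ast}=\max\{\theta_1,\theta_2\}$ and $\|\mathcal I_{\alpha}f\|_{L^{q,(\kappa q)/p}_{\rho,\theta^{\ast}}(w^q)}\le C\|f\|_{L^{p,\kappa}_{\rho,\theta}(w^p,w^q)}$, which proves the inclusion into $L^{q,(\kappa q)/p}_{\rho,\infty}(w^q)$. I expect the principal obstacle to be the bookkeeping in the global estimate: verifying the kernel decay $|K_{\alpha}(x,y)|\lesssim|x-y|^{\alpha-d}(1+|x-y|/\rho(x))^{-N}$ from Proposition~\ref{k}, and then orchestrating the three competing sources of $\big(1+2^{j+1}r/\rho(x_0)\big)$-powers (the kernel decay, the $A^{\rho,\theta_0}_{p,q}$ and Morrey conditions, and the reverse-doubling gain) so that $N$ can be chosen to absorb all of them while only a fixed power of $1+r/\rho(x_0)$ survives---this last point being what places the output in $L^{q,(\kappa q)/p}_{\rho,\infty}(w^q)$ rather than in some single $L^{q,(\kappa q)/p}_{\rho,\theta}(w^q)$.
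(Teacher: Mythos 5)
Your proposal is correct and follows essentially the same route as the paper's proof: the same $2B$ splitting, Theorem \ref{strong} together with the doubling estimate obtained from Lemma \ref{Apq} for the local piece, and for the tail the kernel decay, the annular decomposition, \eqref{com2}, the H\"older step combining the Morrey condition with the $A^{\rho,\theta'}_{p,q}$ condition, and Lemma \ref{comparelem} with $N$ chosen large. The only cosmetic difference is that you derive the kernel bound by integrating Proposition \ref{k}, whereas the paper quotes it directly as Lemma \ref{kernel}.
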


\begin{thm}\label{mainthm:2}
Let $0<\alpha<d$, $p=1$, $q=d/{(d-\alpha)}$ and $w\in A^{\rho,\infty}_{1,q}$. If $V\in RH_s$ with $s\geq d/2$ and $0<\kappa<1/q$, then the $\mathcal L$-fractional integral operator $\mathcal I_{\alpha}$ is bounded from $L^{1,\kappa}_{\rho,\infty}(w,w^q)$ into $WL^{q,(\kappa q)}_{\rho,\infty}(w^q)$.
\end{thm}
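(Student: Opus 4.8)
The plan is to adapt the Komori--Shirai method for fractional integrals on weighted Morrey spaces, with classical Muckenhoupt theory replaced by the $\rho$-localized calculus developed above. Fix a ball $B=B(x_0,r)$, let $\theta$ be such that $f\in L^{1,\kappa}_{\rho,\theta}(w,w^q)$, and decompose $f=f_1+f_2$ with $f_1=f\chi_{2B}$ and $f_2=f\chi_{\mathbb R^d\setminus 2B}$. Since for every $\lambda>0$
\[
\{x\in B:|\mathcal I_\alpha f(x)|>\lambda\}\subseteq\{x\in B:|\mathcal I_\alpha f_1(x)|>\lambda/2\}\cup\{x\in B:|\mathcal I_\alpha f_2(x)|>\lambda/2\},
\]
and $t\mapsto t^{1/q}$ is subadditive, it suffices to bound $\displaystyle\sup_{\mu>0}\mu\,[w^q(\{x\in B:|\mathcal I_\alpha f_i(x)|>\mu\})]^{1/q}$ for $i=1,2$ by $C\,w^q(B)^\kappa(1+r/\rho(x_0))^{\sigma}\|f\|_{L^{1,\kappa}_{\rho,\theta}(w,w^q)}$ with $\sigma$ independent of $B$; taking the supremum over all balls then gives $\mathcal I_\alpha f\in WL^{q,\kappa q}_{\rho,\sigma}(w^q)\subseteq WL^{q,\kappa q}_{\rho,\infty}(w^q)$. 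Throughout I will use that $w^q\in A^{\rho,\infty}_1$, which holds by Lemma~\ref{Apq}, so that Lemma~\ref{comparelem} applies to $w^q$.

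For the local piece I would invoke the weighted weak $(1,q)$ bound of Theorem~\ref{weak}, $\mathcal I_\alpha\colon L^1(w)\to WL^q(w^q)$, to get
\[
\sup_{\mu>0}\mu\,\big[w^q(\{x\in B:|\mathcal I_\alpha f_1(x)|>\mu\})\big]^{1/q}\le C\int_{2B}|f(y)|w(y)\,dy,
\]
and then apply the defining inequality of $L^{1,\kappa}_{\rho,\theta}(w,w^q)$ on $2B$, together with \eqref{2rx} and a doubling-type bound $w^q(2B)\le C(1+r/\rho(x_0))^{\sigma_0}w^q(B)$ — which follows from $w^q\in A^{\rho,\infty}_1$ by comparing $\operatorname*{ess\,inf}_{2B}w^q$ with the average of $w^q$ over the smaller ball $B$ — to rewrite the right-hand side as $C\,w^q(B)^\kappa(1+r/\rho(x_0))^{\sigma_1}\|f\|$. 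This part is routine.

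The substantial part is $f_2$. First I would record the $\rho$-localized kernel bound: integrating the estimate of Proposition~\ref{k} against $t^{\alpha/2-1}\,dt$ shows that the kernel $\mathcal K_\alpha$ of $\mathcal I_\alpha={\mathcal L}^{-\alpha/2}$ (see \eqref{fi}) satisfies, for every integer $N\ge1$,
\[
|\mathcal K_\alpha(x,y)|\le C_N\Big(1+\tfrac{|x-y|}{\rho(x)}\Big)^{-N}|x-y|^{-(d-\alpha)},\qquad x\neq y.
\]
For $x\in B$ I would split $\mathbb R^d\setminus 2B$ into the dyadic annuli $2^{j+1}B\setminus 2^jB$, $j\ge1$, on each of which $|x-y|\approx 2^jr$; since $1/q=1-\alpha/d$ one has $|2^{j+1}B|^{1/q}\approx(2^jr)^{d-\alpha}$, so $|x-y|^{-(d-\alpha)}\approx|2^{j+1}B|^{-1/q}$. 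Estimating $\int_{2^{j+1}B}|f|$ by $\int_{2^{j+1}B}|f|\,w$ divided by $\operatorname*{ess\,inf}_{2^{j+1}B}w$, and then using the $A^{\rho,\infty}_{1,q}$ condition for this infimum and the Morrey bound for $\int_{2^{j+1}B}|f|\,w$, I expect to reach
\[
|\mathcal I_\alpha f_2(x)|\lesssim\sum_{j\ge1}\Big(1+\tfrac{2^jr}{\rho(x)}\Big)^{-N}\Big(1+\tfrac{2^{j+1}r}{\rho(x_0)}\Big)^{\sigma_2}\frac{\|f\|}{w^q(2^{j+1}B)^{1/q-\kappa}}.
\]
I would then replace $\rho(x)$ by $\rho(x_0)$ using \eqref{com2} (at the cost of a factor $(1+r/\rho(x_0))^{NN_0/(N_0+1)}$) and, crucially, invoke Lemma~\ref{comparelem} for $w^q$ in reverse-doubling form $w^q(2^{j+1}B)\gtrsim 2^{jd\delta}(1+2^{j+1}r/\rho(x_0))^{-\eta}w^q(B)$; here the hypothesis $\kappa<1/q$ makes $1/q-\kappa>0$, so this yields a genuine geometric gain $2^{-jd\delta(1/q-\kappa)}$. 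Choosing $N$ large enough to absorb all the positive powers of $(1+2^{j+1}r/\rho(x_0))$, the series sums to a constant, giving $|\mathcal I_\alpha f_2(x)|\le C(1+r/\rho(x_0))^{\sigma_3}w^q(B)^{-(1/q-\kappa)}\|f\|$ for a.e.\ $x\in B$, with the right-hand side independent of $x$. Hence $\{x\in B:|\mathcal I_\alpha f_2(x)|>\mu\}$ is empty for $\mu$ above this bound and is contained in $B$ otherwise, which gives $\mu\,[w^q(\{x\in B:|\mathcal I_\alpha f_2(x)|>\mu\})]^{1/q}\le C(1+r/\rho(x_0))^{\sigma_3}w^q(B)^\kappa\|f\|$, as required.

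The hard part is precisely this treatment of $f_2$: a crude bound on the dyadic sum only yields a logarithmic divergence in $\rho(x_0)/r$ as $r\to0$, which is incompatible with a target estimate of order $(1+r/\rho(x_0))^{\sigma}$. What rescues the argument is the interplay between the freely large decay exponent $N$ in the kernel estimate and the reverse-doubling growth $w^q(2^{j+1}B)\gtrsim 2^{jd\delta}(\cdots)\,w^q(B)$ furnished by Lemma~\ref{comparelem}, and it is exactly here that the restriction $\kappa<1/q$ is indispensable. The remaining preliminary steps — deriving the $\rho$-localized kernel estimate from Proposition~\ref{k} and controlling the passage from $\rho(x)$ to $\rho(x_0)$ through Lemma~\ref{N0} — are standard.
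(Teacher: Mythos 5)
Your proposal is correct and follows essentially the same route as the paper's proof: the same decomposition $f=f_1+f_2$, Theorem \ref{weak} together with the doubling of $w^q\in A^{\rho,\infty}_1$ for the local part, and for the tail the localized kernel bound, the replacement of $\rho(x)$ by $\rho(x_0)$ via \eqref{com2}, and Lemma \ref{comparelem} applied to $w^q$ to extract the geometric gain $2^{-jd\delta(1/q-\kappa)}$ (using $\kappa<1/q$) that makes the dyadic series summable after choosing $N$ large. The only, harmless, deviation is that you control $\int_{2^{j+1}B}|f|$ directly from the $A^{\rho,\infty}_{1,q}$ condition on $w$, whereas the paper routes this step through Lemma \ref{relation}, splitting $w^q\in A^{\rho,\theta' q}_1$ into $w\in A^{\rho,\theta'}_1\cap RH^{\rho,\theta'}_q$; both yield the same bound up to an irrelevant extra power of $\bigl(1+2^{j+1}r/\rho(x_0)\bigr)$.
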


Concerning the continuity property of the commutators $[b,\mathcal I_{\alpha}]$ in the setting of weighted Morrey spaces, where $b$ is in the new BMO-type space, we have
\begin{thm}\label{mainthm:3}
Let $0<\alpha<d$, $1<p<d/{\alpha}$, $1/q=1/p-{\alpha}/d$ and $w\in A^{\rho,\infty}_{p,q}$. If $V\in RH_s$ with $s\geq d/2$ and $0<\kappa<p/q$ , then the commutator operator $[b,\mathcal I_{\alpha}]$ is bounded from $L^{p,\kappa}_{\rho,\infty}(w^p,w^q)$ into $L^{q,{(\kappa q)}/p}_{\rho,\infty}(w^q)$, whenever $b\in\mathrm{BMO}_{\rho,\infty}(\mathbb R^d)$.
\end{thm}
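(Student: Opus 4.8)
The plan is to split $f$ into a part supported near the given ball and a tail, estimate the near part with the strong-type bound already available on weighted Lebesgue spaces (Theorem~\ref{cstrong}), estimate the tail by a pointwise kernel argument, and keep careful track of the factors $(1+r/\rho(x_0))$ throughout. Since $b\in\mathrm{BMO}_{\rho,\infty}(\mathbb R^d)$ and $f\in L^{p,\kappa}_{\rho,\infty}(w^p,w^q)$, one first fixes $\theta,\theta_0>0$ with $b\in\mathrm{BMO}_{\rho,\theta}(\mathbb R^d)$ and $f\in L^{p,\kappa}_{\rho,\theta_0}(w^p,w^q)$, and it will be enough to produce a $\theta_1>0$ (depending on $\theta,\theta_0,N_0,p,q$ and the weight) such that, for every ball $B=B(x_0,r)$,
\begin{equation*}
\Big(\int_B\big|[b,\mathcal I_\alpha]f(x)\big|^qw^q(x)\,dx\Big)^{1/q}\le C\,\|b\|_{\mathrm{BMO}_{\rho,\theta}}\,\|f\|_{L^{p,\kappa}_{\rho,\theta_0}(w^p,w^q)}\,w^q(B)^{\kappa/p}\Big(1+\frac{r}{\rho(x_0)}\Big)^{\theta_1},
\end{equation*}
since dividing by $w^q(B)^{\kappa/p}=\big(w^q(B)^{(\kappa q)/p}\big)^{1/q}$ and taking the supremum over balls then gives $[b,\mathcal I_\alpha]f\in L^{q,(\kappa q)/p}_{\rho,\theta_1}(w^q)\subset L^{q,(\kappa q)/p}_{\rho,\infty}(w^q)$. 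With $B$ fixed, I would write $f=f_1+f_2$, $f_1=f\chi_{2B}$, $f_2=f\chi_{\mathbb R^d\setminus2B}$, and use the pointwise identity
\begin{equation*}
[b,\mathcal I_\alpha]f=[b,\mathcal I_\alpha]f_1+(b-b_B)\mathcal I_\alpha f_2-\mathcal I_\alpha\big((b-b_B)f_2\big)=:\mathrm{D}_1+\mathrm{D}_2+\mathrm{D}_3,
\end{equation*}
reducing the problem to estimating $\big(\int_B|\mathrm{D}_i|^qw^q\big)^{1/q}$, $i=1,2,3$, by the right-hand side above.

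Three ingredients will be needed. First, from \eqref{fi} and Proposition~\ref{k}, the kernel $K_\alpha(x,y)$ of $\mathcal I_\alpha$ (so $\mathcal I_\alpha g(x)=\int_{\mathbb R^d}K_\alpha(x,y)g(y)\,dy$) satisfies, for every integer $N\ge1$,
\begin{equation*}
\big|K_\alpha(x,y)\big|\le\frac{C_N}{|x-y|^{d-\alpha}}\Big(1+\frac{|x-y|}{\rho(x)}\Big)^{-N},\qquad x\ne y;
\end{equation*}
this follows by inserting the bound of Proposition~\ref{k} (with a large parameter depending on $N$) into $K_\alpha(x,y)=\int_0^\infty p_t(x,y)\,t^{\alpha/2-1}\,dt$ and splitting the $t$-integral at $t\approx\min\{|x-y|^2,\rho(x)^2\}$. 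Second, by Lemma~\ref{Apq} both $w^q$ and $w^{-p'}$ belong to $A^{\rho,\infty}_m$ for some finite $m$; hence each such weight $\omega$ is $\rho$-doubling ($\omega(2B')\le C(1+r'/\rho(x_0'))^{c_\omega}\omega(B')$, immediate from the definition of $A^{\rho,\theta}_m$), obeys Lemma~\ref{comparelem}, and, upon applying Proposition~\ref{wangbmo} to $\omega$ and integrating \eqref{wang} in $\lambda$, satisfies the weighted John--Nirenberg bound
\begin{equation*}
\Big(\frac1{\omega(B')}\int_{B'}\big|b(x)-b_{B'}\big|^s\omega(x)\,dx\Big)^{1/s}\le C_s\,\|b\|_{\mathrm{BMO}_{\rho,\theta}}\Big(1+\frac{r'}{\rho(x_0')}\Big)^{\theta^{\ast}+\eta_\omega/s}
\end{equation*}
for every $0<s<\infty$ and every ball $B'=B(x_0',r')$. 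Third, since $1/p'+1/q=1-\alpha/d$, the defining inequality of $A^{\rho,\theta}_{p,q}$ gives, for every ball $B'=B(x_0',r')$,
\begin{equation*}
\Big(\int_{B'}w(y)^{-p'}\,dy\Big)^{1/p'}\le C\,(r')^{d-\alpha}\Big(1+\frac{r'}{\rho(x_0')}\Big)^{\theta}\,w^q(B')^{-1/q}.
\end{equation*}

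For $\mathrm{D}_1$ I would simply use Theorem~\ref{cstrong}: $\big(\int_B|\mathrm{D}_1|^qw^q\big)^{1/q}\le\|[b,\mathcal I_\alpha]f_1\|_{L^q(w^q)}\le C\|b\|_{\mathrm{BMO}_{\rho,\theta}}\big(\int_{2B}|f|^pw^p\big)^{1/p}$, and then bound the last factor by $\|f\|_{L^{p,\kappa}_{\rho,\theta_0}}w^q(2B)^{\kappa/p}(1+2r/\rho(x_0))^{\theta_0}$, absorbing $w^q(2B)$ into $w^q(B)$ via \eqref{2rx} and the $\rho$-doubling of $w^q$ at the cost of a power of $(1+r/\rho(x_0))$. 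For the tail terms I would decompose $\mathbb R^d\setminus2B=\bigcup_{k\ge1}(2^{k+1}B\setminus2^kB)$, observe $|x-y|\approx2^kr$ for $x\in B$ and $y\in2^{k+1}B\setminus2^kB$, and use \eqref{com2} to replace $\rho(x)$ by $\rho(x_0)$ at cost $(1+r/\rho(x_0))^{NN_0/(N_0+1)}$. For $\mathrm{D}_2$, the kernel bound gives $|\mathcal I_\alpha f_2(x)|\le C\sum_{k\ge1}(2^kr)^{\alpha-d}(1+2^kr/\rho(x_0))^{-N}(1+r/\rho(x_0))^{NN_0/(N_0+1)}\int_{2^{k+1}B}|f|$, while H\"older's inequality with the third ingredient and the Morrey condition on $f$ yields $\int_{2^{k+1}B}|f|\le C\|f\|_{L^{p,\kappa}_{\rho,\theta_0}}(2^kr)^{d-\alpha}(1+2^{k+1}r/\rho(x_0))^{\theta_0+\theta}w^q(2^{k+1}B)^{\kappa/p-1/q}$; because $0<\kappa<p/q$ we have $\kappa/p-1/q<0$, so Lemma~\ref{comparelem} applied to $B\subset2^{k+1}B$ gives $w^q(2^{k+1}B)^{\kappa/p-1/q}\le C\,2^{-(k+1)\beta}(1+2^{k+1}r/\rho(x_0))^{\eta'}w^q(B)^{\kappa/p-1/q}$ for some $\beta>0$, and taking $N$ larger than all accumulated exponents of $(1+2^kr/\rho(x_0))$ turns the $k$-sum into a convergent geometric series. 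This gives $\sup_{x\in B}|\mathcal I_\alpha f_2(x)|\le C\|f\|_{L^{p,\kappa}_{\rho,\theta_0}}w^q(B)^{\kappa/p-1/q}(1+r/\rho(x_0))^{\theta_1'}$; multiplying by $|b(x)-b_B|$, integrating over $B$, and invoking the weighted John--Nirenberg bound with $\omega=w^q$, $s=q$ produces the required estimate for $\mathrm{D}_2$. The term $\mathrm{D}_3$ is handled in the same way, except that on each annulus one splits $|b(y)-b_B|\le|b(y)-b_{2^{k+1}B}|+|b_{2^{k+1}B}-b_B|$: the second difference is controlled by a telescoping sum and \eqref{BM} by $C(k+1)\|b\|_{\mathrm{BMO}_{\rho,\theta}}(1+2^{k+1}r/\rho(x_0))^{\theta}$, while the first is controlled by H\"older's inequality, the third ingredient, the Morrey condition, and the weighted John--Nirenberg bound with $\omega=w^{-p'}$, $s=p'$ on $2^{k+1}B$; the harmless factor $k+1$ is absorbed by the geometric series, and multiplying the resulting bound $\sup_{x\in B}|\mathrm{D}_3(x)|\le C\|b\|_{\mathrm{BMO}_{\rho,\theta}}\|f\|_{L^{p,\kappa}_{\rho,\theta_0}}w^q(B)^{\kappa/p-1/q}(1+r/\rho(x_0))^{\theta_1''}$ by $w^q(B)^{1/q}$ finishes $\mathrm{D}_3$. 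Taking $\theta_1$ to be the maximum of the exponents produced completes the argument.

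The hard part is not any single inequality but the bookkeeping: after substituting the kernel bound, the Morrey control of $f$, the weighted John--Nirenberg estimates, the telescoping of $b$, and the replacement of $\rho(x)$ by $\rho(x_0)$, each dyadic term carries a large but fixed power of $(1+2^kr/\rho(x_0))$, and one must verify it is absorbed by the kernel decay $(1+2^kr/\rho(x_0))^{-N}$---legitimate precisely because $N$ may be chosen arbitrarily large at the price of enlarging $\theta_1$---and that the remaining $k$-series is summable, which relies exactly on the strict inequality $\kappa<p/q$: it is this that makes $\kappa/p-1/q<0$ and hence, through Lemma~\ref{comparelem}, produces the geometric factor $2^{-(k+1)\beta}$ with $\beta>0$.
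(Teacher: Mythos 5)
Your proposal is correct and follows essentially the same route as the paper: the same splitting $f=f_1+f_2$ with $f_1=f\chi_{2B}$, the same three-term decomposition of the commutator (your $\mathrm{D}_1,\mathrm{D}_2,\mathrm{D}_3$ are the paper's $J_1$ and the terms $\xi,\zeta$ giving $J_3,J_4$), Theorem~\ref{cstrong} for the local part, the kernel decay of Lemma~\ref{kernel} combined with \eqref{com2}, the weighted John--Nirenberg estimate (Lemma~\ref{BMO1}) applied to $w^q$ and to $w^{-p'}$, the telescoping bound for $|b_{2^{k+1}B}-b_B|$ (Lemma~\ref{BMO2}), and convergence of the dyadic sum via Lemma~\ref{comparelem} and $\kappa<p/q$ after choosing $N$ large. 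The only cosmetic difference is that you sketch a derivation of the kernel bound from Proposition~\ref{k}, whereas the paper simply cites it.
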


Moreover, for the extreme case $\kappa=p/q$ of Theorem \ref{mainthm:1}, we will show that the fractional integrals associated to $\mathcal L$ maps $L^{p,\kappa}_{\rho,\infty}(w^p,w^q)$ continuously into the new space $\mathrm{BMO}_{\rho,\infty}(\mathbb R^d)$. This result may be regarded as a supplement of Theorem \ref{mainthm:1}.
\begin{thm}\label{mainthm:5}
Let $0<\alpha<d$, $1<p<d/{\alpha}$, $1/q=1/p-{\alpha}/d$ and $w\in A^{\rho,\infty}_{p,q}$. If $V\in RH_s$ with $s\geq d/2$ and $\kappa=p/q$, then the $\mathcal L$-fractional integral operator $\mathcal I_{\alpha}$ is bounded from $L^{p,\kappa}_{\rho,\infty}(w^p,w^q)$ into $\mathrm{BMO}_{\rho,\infty}(\mathbb R^d)$.
\end{thm}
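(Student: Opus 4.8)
The plan is to show that for any $f\in L^{p,\kappa}_{\rho,\infty}(w^p,w^q)$ with $\kappa=p/q$, the function $\mathcal I_\alpha f$ satisfies the $\mathrm{BMO}_{\rho,\theta'}$ condition \eqref{BM} for a suitable $\theta'$ depending on $\theta$ (the order of $f$ in the Morrey scale) and on the weight parameters. Fix a ball $B=B(x_0,r)$. As in the standard Morrey-to-BMO argument for fractional integrals, I would estimate $\frac{1}{|B|}\int_B|\mathcal I_\alpha f(x)-c_B|\,dx$ for an appropriate constant $c_B$, rather than using $(\mathcal I_\alpha f)_B$ directly. Split $f=f_1+f_2$ with $f_1=f\chi_{2B}$ and $f_2=f\chi_{(2B)^c}$, and take $c_B=\mathcal I_\alpha f_2(x_0)$ (or its average over $B$). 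The local term is controlled by $\frac{1}{|B|}\int_B|\mathcal I_\alpha f_1(x)|\,dx$, which by Hölder's inequality is bounded by $|B|^{-1/q}\bigl(\int_B|\mathcal I_\alpha f_1|^q w^q\bigr)^{1/q}\bigl(\int_B w^{-q'}\bigr)^{1/q'}|B|^{-1/q'}$-type expressions; here I invoke Theorem \ref{strong} (boundedness of $\mathcal I_\alpha$ from $L^p(w^p)$ into $L^q(w^q)$) to pass to $\bigl(\int_{2B}|f|^p w^p\bigr)^{1/p}$, and then the Morrey condition \eqref{morrey1} with $\kappa=p/q$ gives a bound of the form $w^q(2B)^{\kappa/p}(1+2r/\rho(x_0))^\theta=w^q(2B)^{1/q}(1+2r/\rho(x_0))^\theta$, while the remaining weight factors combined with $A^{\rho,\infty}_{p,q}$ produce exactly the normalization $|B|/w^q(B)^{1/q}$ up to a factor $(1+r/\rho(x_0))^{\eta_1}$. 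The point of the exponent $\kappa=p/q$ is that these powers of $w^q(B)$ cancel, leaving a pure $(1+r/\rho(x_0))$-power, which is what \eqref{BM} requires.

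For the nonlocal term I would estimate $\frac{1}{|B|}\int_B|\mathcal I_\alpha f_2(x)-\mathcal I_\alpha f_2(x_0)|\,dx$ by decomposing $(2B)^c=\bigcup_{k\geq1}2^{k+1}B\setminus 2^kB$ and using the kernel estimate for $\mathcal I_\alpha={\mathcal L}^{-\alpha/2}$: integrating the bound of Proposition \ref{k} in $t$ gives a kernel $K_\alpha(x,y)$ satisfying, for every $N$, $|K_\alpha(x,y)|\leq C_N|x-y|^{\alpha-d}\bigl(1+|x-y|/\rho(x)\bigr)^{-N}$, together with the smoothness estimate $|K_\alpha(x,y)-K_\alpha(x_0,y)|\leq C_N|x-x_0|^{\gamma}|x_0-y|^{\alpha-d-\gamma}\bigl(1+|x_0-y|/\rho(x_0)\bigr)^{-N}$ for $x\in B$, $|x_0-y|\geq 2|x-x_0|$, with some $\gamma\in(0,1]$. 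This yields, for each annulus, a factor $2^{-k\gamma}$ from the smoothness together with $(2^k r)^{\alpha-d}\int_{2^{k+1}B}|f(y)|\,dy$; applying Hölder with exponent $p$ and then the Morrey condition on $2^{k+1}B$ gives a bound involving $w^p(2^{k+1}B)^{-1/p}\cdot$ (something)$\cdot w^q(2^{k+1}B)^{\kappa/p}(1+2^{k+1}r/\rho(x_0))^\theta$. Using Lemma \ref{Apq} (so that $w^q\in A^{\rho,\infty}_t$ and $w^{-p'}\in A^{\rho,\infty}_{t'}$) together with Lemma \ref{comparelem} to compare $w^q(2^{k+1}B)$ and $w^q(B)$, and \eqref{com2}/\eqref{2rx} to handle the critical-radius factors, I can sum the resulting series in $k$: the geometric decay $2^{-k\gamma}$ (chosen $N$ large enough to beat the polynomial growth $2^{k\eta}$ coming from the $A^{\rho,\infty}$ comparisons and from $(1+2^{k+1}r/\rho(x_0))^{\theta+\theta^{\ast}}$) guarantees convergence, and the sum is again $\lesssim \frac{|B|}{w^q(B)^{1/q}}(1+r/\rho(x_0))^{\theta'}$ for an appropriate $\theta'$.

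The main obstacle I anticipate is bookkeeping the accumulated powers of $(1+2^kr/\rho(x_0))$ across the dilated balls and making sure the decay from kernel smoothness genuinely dominates. Concretely: each application of Lemma \ref{comparelem} to move from $w^q(2^{k+1}B)$ to $w^q(B)$ costs a factor $2^{k\eta_0}(1+2^{k+1}r/\rho(x_0))^{\eta_0}$; the Morrey condition contributes $(1+2^{k+1}r/\rho(x_0))^\theta$; and the kernel's $\bigl(1+|x_0-y|/\rho(x_0)\bigr)^{-N}$ on the $k$-th annulus is $\approx (1+2^kr/\rho(x_0))^{-N}$. Since $(1+2^kr/\rho(x_0))\geq 2^{k-1}(1+r/\rho(x_0))^{\text{(something)}}$ is not quite right — one only has $(1+2^kr/\rho(x_0))\leq 2^k(1+r/\rho(x_0))$ from \eqref{2rx} iterated, and a lower bound $(1+2^kr/\rho(x_0))\geq \max\{1,2^{k-1}r/\rho(x_0)\}$ — I must choose $N$ larger than $\eta_0+\theta+d/\alpha$ (or whatever the precise exponent turns out to be) so that, after also extracting the $2^{-k\gamma}$ from smoothness if $\gamma$ alone is insufficient, the series $\sum_k 2^{-k(N-\text{rest})}$ converges, and the leftover $(1+r/\rho(x_0))$-power is finite; this determines $\theta'$. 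Once this is arranged the constant is independent of $B$, so $\mathcal I_\alpha f\in \mathrm{BMO}_{\rho,\theta'}(\mathbb R^d)\subset\mathrm{BMO}_{\rho,\infty}(\mathbb R^d)$, with norm controlled by $\|f\|_{L^{p,\kappa}_{\rho,\infty}(w^p,w^q)}$, completing the proof.
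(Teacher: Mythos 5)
Your proposal is correct and follows essentially the same route as the paper: split off the tail $f\chi_{(cB)^c}$, control the local piece via Theorem \ref{strong}, H\"older's inequality and the $A^{\rho,\infty}_{p,q}$ condition (which converts $(\int_B w^{-q'})^{1/q'}$ into $|B|/w^q(B)^{1/q}$ so that the $\kappa=p/q$ normalization cancels), and control the tail by the H\"older-smoothness of $\mathcal K_{\alpha}$ summed over dyadic annuli with $N$ large and the $2^{-k\varepsilon}$ factor driving convergence. Two small remarks: the smoothness estimate you write down is precisely the paper's Lemma \ref{kernel2}, a quoted result from \cite{bong5} rather than a direct consequence of integrating the size bound of Proposition \ref{k} in $t$; and since $\kappa=p/q$ the powers $w^q(2^{k+1}B)^{\kappa/p-1/q}$ vanish identically, so the comparison of $w^q(2^{k+1}B)$ with $w^q(B)$ via Lemma \ref{comparelem} that you budget for is unnecessary here (it is only needed in the regime $\kappa<p/q$ of Theorems \ref{mainthm:1}--\ref{mainthm:3}).
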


\begin{rem}
It is worth pointing out that in the classical case when $V\equiv0$, Theorems \ref{mainthm:1}, \ref{mainthm:2} and \ref{mainthm:3} have been proved by Komori and Shirai in \cite{komori}, while Theorem \ref{mainthm:5} has been shown by the author in \cite{wang3}.
\end{rem}

\section{Proofs of Theorems $\ref{mainthm:1}$ and $\ref{mainthm:2}$}
In this section, we will prove the conclusions of Theorems \ref{mainthm:1} and \ref{mainthm:2}. Let us remind that the $\mathcal L$-Fractional integral operator of order $\alpha\in(0,d)$ can be written as
\begin{equation*}
\begin{split}
\mathcal I_{\alpha}f(x)={\mathcal L}^{-{\alpha}/2}f(x)
&=\int_0^{\infty}e^{-t\mathcal L}f(x)\,t^{\alpha/2-1}dt\\
&=\int_{\mathbb R^d}\bigg[\int_0^{\infty}p_t(x,y)\,t^{\alpha/2-1}dt\bigg]f(y)\,dy.
\end{split}
\end{equation*}
The kernel of the fractional integral operator $\mathcal I_{\alpha}$ will be denoted by $\mathcal K_{\alpha}(x,y)$. Then (see \cite{bong5,bong4})
\begin{equation*}
\mathcal K_{\alpha}(x,y)=\int_0^{\infty}p_t(x,y)\,t^{\alpha/2-1}dt,
\end{equation*}
and
\begin{equation*}
\mathcal I_{\alpha}f(x)=\int_{\mathbb R^d}\mathcal K_{\alpha}(x,y)f(y)\,dy.
\end{equation*}
The following lemma plays a key role in the proof of our main theorems, which can be found in \cite[Proposition 8]{bong5} (see also \cite[Lemma 3.7]{tang}).
\begin{lem}[\cite{bong5}]\label{kernel}
Let $V\in RH_s$ with $s\geq d/2$ and $0<\alpha<d$. For every positive integer $N\geq1$, there exists a positive constant $C_N>0$ such that for all $x$ and $y$ in $\mathbb R^d$,
\begin{equation*}
\big|\mathcal K_{\alpha}(x,y)\big|\leq C_N\bigg(1+\frac{|x-y|}{\rho(x)}\bigg)^{-N}\frac{1}{|x-y|^{d-\alpha}}.
\end{equation*}
\end{lem}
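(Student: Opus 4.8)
The plan is to prove the pointwise bound by feeding the improved heat-kernel estimate of Proposition \ref{k} into the subordination formula defining $\mathcal K_\alpha$, and then evaluating two elementary one-dimensional integrals in $t$. Writing $r=|x-y|$ and $\rho=\rho(x)$, I would start from
\[
\big|\mathcal K_\alpha(x,y)\big|\le\int_0^\infty\big|p_t(x,y)\big|\,t^{\alpha/2-1}\,dt,
\]
apply Proposition \ref{k} with the same integer $N\ge1$ as in the statement, and discard the $\rho(y)$ term by means of $(1+\sqrt t/\rho(x)+\sqrt t/\rho(y))^{-N}\le(1+\sqrt t/\rho)^{-N}$. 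This reduces the lemma to the single scalar inequality
\[
\Big(1+\frac r\rho\Big)^{N}\int_0^\infty t^{\alpha/2-d/2-1}e^{-r^2/(5t)}\Big(1+\frac{\sqrt t}\rho\Big)^{-N}\,dt\le C_N\,r^{\alpha-d}.
\]

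The key device is to merge the two powers of $\rho$ into a single ratio,
\[
\Big(1+\frac r\rho\Big)^{N}\Big(1+\frac{\sqrt t}\rho\Big)^{-N}=\Big(\frac{\rho+r}{\rho+\sqrt t}\Big)^{N},
\]
and then to split the $t$-integral at $\sqrt t=r$. On the region $\sqrt t\ge r$ this ratio is at most $1$, so the contribution is dominated by $\int_0^\infty t^{\alpha/2-d/2-1}e^{-r^2/(5t)}\,dt$, which the substitution $s=r^2/t$ turns into a Gamma integral equal to $C\,r^{\alpha-d}$, convergent precisely because $\alpha<d$ forces $\alpha/2-d/2<0$. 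On the region $\sqrt t<r$ I would instead use the elementary inequality $\frac{\rho+r}{\rho+\sqrt t}\le1+\frac r{\sqrt t}\le\frac{2r}{\sqrt t}$, trading the (potentially large) ratio for a power of $r/\sqrt t$ that the Gaussian can absorb. This bounds the contribution by $2^N r^N\int_0^{r^2}t^{\alpha/2-d/2-N/2-1}e^{-r^2/(5t)}\,dt$, and the same substitution $s=r^2/t$ evaluates it as $C_N\,r^N\cdot r^{\alpha-d-N}=C_N\,r^{\alpha-d}$, the integral converging because $\alpha/2-d/2-N/2<0$ for every $N\ge1$. Summing the two pieces and dividing by $(1+r/\rho)^N$ yields the claim.

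Once the heat-kernel bound of Proposition \ref{k} is in hand the computation is essentially routine, so the only genuine decisions are organizational. The step I expect to require the most care is the small-$t$ region $\sqrt t<r$: there the factor $(1+\sqrt t/\rho)^{-N}$ supplies almost no decay (it is close to $1$ when $\sqrt t$ is small), so \emph{all} of the required $\rho$-decay must be manufactured out of the Gaussian. The inequality $\frac{\rho+r}{\rho+\sqrt t}\le 2r/\sqrt t$ is exactly what allows $e^{-r^2/(5t)}$ to pay for the $N$ extra powers of $r$, and it does so uniformly, avoiding any case distinction between $r\le\rho$ and $r>\rho$. The one quantitative point to keep in view is that the shifted exponent $\alpha/2-d/2-N/2-1$ must leave the Gamma integral finite, which it does for all $N\ge1$ thanks to $\alpha<d$.
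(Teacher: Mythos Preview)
Your argument is correct. The paper does not supply its own proof of this lemma; it simply quotes the bound from \cite[Proposition~8]{bong5} (see also \cite[Lemma~3.7]{tang}). Your derivation from Proposition~\ref{k} via the subordination formula is exactly the natural route and is, in substance, how the result is obtained in those references: one inserts the improved heat-kernel estimate into $\mathcal K_\alpha(x,y)=\int_0^\infty p_t(x,y)\,t^{\alpha/2-1}\,dt$ and evaluates the resulting $t$-integral. Your splitting at $t=r^2$ and the handling of the small-$t$ region by trading $(1+r/\rho)^N(1+\sqrt t/\rho)^{-N}\le(2r/\sqrt t)^N$ against the Gaussian are clean and correct; the Gamma-type integrals all converge under the hypothesis $0<\alpha<d$.
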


\begin{proof}[Proof of Theorem $\ref{mainthm:1}$]
By definition, we only have to show that for any given ball $B=B(x_0,r)$ of $\mathbb R^d$, there is some $\vartheta>0$ such that
\begin{equation}\label{Main1}
\bigg(\frac{1}{w^q(B)^{\kappa q/p}}\int_B\big|\mathcal I_{\alpha}f(x)\big|^qw^q(x)\,dx\bigg)^{1/q}\leq C\cdot\left(1+\frac{r}{\rho(x_0)}\right)^{\vartheta}
\end{equation}
holds for any $f\in L^{p,\kappa}_{\rho,\infty}(w^p,w^q)$ with $1<p<q<\infty$ and $0<\kappa<p/q$. Suppose that $f\in L^{p,\kappa}_{\rho,\theta}(w^p,w^q)$ for some $\theta>0$ and $w\in A^{\rho,\theta'}_{p,q}$ for some $\theta'>0$. We decompose the function $f$ as
\begin{equation*}
\begin{cases}
f=f_1+f_2\in L^{p,\kappa}_{\rho,\theta}(w^p,w^q);\  &\\
f_1=f\cdot\chi_{2B};\  &\\
f_2=f\cdot\chi_{(2B)^c},
\end{cases}
\end{equation*}
where $2B$ is the ball centered at $x_0$ of radius $2r>0$, and $\chi_{2B}$ is the characteristic function of $2B$. Then by the linearity of $\mathcal I_{\alpha}$, we write
\begin{equation*}
\begin{split}
\bigg(\frac{1}{w^q(B)^{\kappa q/p}}\int_B\big|\mathcal I_{\alpha}f(x)\big|^qw^q(x)\,dx\bigg)^{1/q}
&\leq\bigg(\frac{1}{w^q(B)^{\kappa q/p}}\int_B\big|\mathcal I_{\alpha}f_1(x)\big|^qw^q(x)\,dx\bigg)^{1/q}\\
&+\bigg(\frac{1}{w^q(B)^{\kappa q/p}}\int_B\big|\mathcal I_{\alpha}f_2(x)\big|^qw^q(x)\,dx\bigg)^{1/q}\\
&:=I_1+I_2.
\end{split}
\end{equation*}
We now analyze each term separately. By Theorem \ref{strong}, we get
\begin{equation*}
\begin{split}
I_1&=\bigg(\frac{1}{w^q(B)^{\kappa q/p}}\int_B\big|\mathcal I_{\alpha}f_1(x)\big|^qw^q(x)\,dx\bigg)^{1/q}\\
&\leq C\cdot\frac{1}{w^q(B)^{\kappa/p}}\bigg(\int_{\mathbb R^d}\big|f_1(x)\big|^pw^p(x)\,dx\bigg)^{1/p}\\
&=C\cdot\frac{1}{w^q(B)^{\kappa/p}}\bigg(\int_{2B}\big|f(x)\big|^pw^p(x)\,dx\bigg)^{1/p}\\
&\leq C\big\|f\big\|_{L^{p,\kappa}_{\rho,\theta}(w^p,w^q)}\cdot\frac{w^q(2B)^{\kappa/p}}{w^q(B)^{\kappa/p}}\cdot\left(1+\frac{2r}{\rho(x_0)}\right)^{\theta}.
\end{split}
\end{equation*}
Since $w\in A^{\rho,\theta'}_{p,q}$ with $1<p<q<\infty$ and $0<\theta'<\infty$, then we know that $w^q\in A^{\rho,\theta'\cdot\frac{1}{1/q+1/{p'}}}_t$ with $t=1+q/{p'}$ according to Lemma \ref{Apq}. Now we claim that for every weight $v\in A^{\rho,\tau}_t$ and every ball $B$ in $\mathbb R^d$, there exists a dimensional constant $C>0$ independent of $v$ and $B$ such that
\begin{equation}\label{doubling1}
v\big(2B(x_0,r)\big)\leq C\cdot\left(1+\frac{2r}{\rho(x_0)}\right)^{t\tau}v\big(B(x_0,r)\big).
\end{equation}
In fact, for $1<t<\infty$, by H\"older's inequality and the definition of $A^{\rho,\tau}_t$, we have
\begin{equation*}
\begin{split}
&\frac{1}{|2B|}\int_{2B}\big|\hbar(x)\big|\,dx=\frac{1}{|2B|}\int_{2B}\big|\hbar(x)\big|v(x)^{1/t}v(x)^{-1/t}\,dx\\
&\leq\frac{1}{|2B|}\bigg(\int_{2B}\big|\hbar(x)\big|^tv(x)\,dx\bigg)^{1/t}
\bigg(\int_{2B}v(x)^{-{t'}/t}\,dx\bigg)^{1/{t'}}\\
&\leq \frac{C_v}{v(2B)^{1/t}}\bigg(\int_{2B}\big|\hbar(x)\big|^tv(x)\,dx\bigg)^{1/t}
\left(1+\frac{2r}{\rho(x_0)}\right)^{\tau}.
\end{split}
\end{equation*}
If we take $\hbar(x)=\chi_B(x)$, then the above expression becomes
\begin{equation}\label{W1}
\frac{|B|}{|2B|}\leq C_v\cdot\frac{v(B)^{1/t}}{v(2B)^{1/t}}\left(1+\frac{2r}{\rho(x_0)}\right)^{\tau},
\end{equation}
which in turn implies \eqref{doubling1}. Also observe that
\begin{equation*}
t\theta'\cdot\frac{1}{1/q+1/{p'}}=\big(1+q/{p'}\big)\theta'\cdot\frac{1}{1/q+1/{p'}}=q\theta'.
\end{equation*}
Therefore, in view of \eqref{doubling1} and \eqref{2rx},
\begin{equation*}
\begin{split}
I_1&\leq C_w\big\|f\big\|_{L^{p,\kappa}_{\rho,\theta}(w^p,w^q)}
\cdot\left(1+\frac{2r}{\rho(x_0)}\right)^{(q\theta')\cdot(\kappa/p)}\cdot\left(1+\frac{2r}{\rho(x_0)}\right)^{\theta}\\
&=C_w\big\|f\big\|_{L^{p,\kappa}_{\rho,\theta}(w^p,w^q)}
\cdot\left(1+\frac{2r}{\rho(x_0)}\right)^{\vartheta'}\leq C\cdot\left(1+\frac{r}{\rho(x_0)}\right)^{\vartheta'},
\end{split}
\end{equation*}
where $\vartheta':={(q\theta'\kappa)}/p+\theta$. For the other term $I_2$, notice that for any $x\in B$ and $y\in (2B)^c$, one has $|x-y|\approx|x_0-y|$. It then follows from Lemma \ref{kernel} that for any $x\in B(x_0,r)$ and any positive integer $N$,
\begin{equation}\label{T}
\begin{split}
\big|\mathcal I_{\alpha}f_2(x)\big|&\leq\int_{(2B)^c}|\mathcal K_{\alpha}(x,y)|\cdot|f(y)|\,dy\\
&\leq C_N\int_{(2B)^c}\bigg(1+\frac{|x-y|}{\rho(x)}\bigg)^{-N}\frac{1}{|x-y|^{d-\alpha}}\cdot|f(y)|\,dy\\
&\leq C_{N,d}\int_{(2B)^c}\bigg(1+\frac{|x_0-y|}{\rho(x)}\bigg)^{-N}\frac{1}{|x_0-y|^{d-\alpha}}\cdot|f(y)|\,dy\\
&=C_{N,d}\sum_{k=1}^\infty\int_{2^{k+1}B\backslash 2^k B}\bigg(1+\frac{|x_0-y|}{\rho(x)}\bigg)^{-N}\frac{1}{|x_0-y|^{d-\alpha}}\cdot|f(y)|\,dy\\
&\leq C_{N,d}\sum_{k=1}^\infty\frac{1}{|2^{k+1}B|^{1-(\alpha/d)}}\int_{2^{k+1}B\backslash 2^k B}\bigg(1+\frac{2^kr}{\rho(x)}\bigg)^{-N}|f(y)|\,dy.
\end{split}
\end{equation}
In view of \eqref{com2} in Lemma \ref{N0} and \eqref{2rx}, we further obtain
\begin{align}\label{Tf2}
\big|\mathcal I_{\alpha}f_2(x)\big|
&\leq C\sum_{k=1}^\infty\frac{1}{|2^{k+1}B|^{1-(\alpha/d)}}\int_{2^{k+1}B}\left(1+\frac{r}{\rho(x_0)}\right)^{N\cdot\frac{N_0}{N_0+1}}
\left(1+\frac{2^kr}{\rho(x_0)}\right)^{-N}|f(y)|\,dy\notag\\
&\leq C\sum_{k=1}^\infty\frac{1}{|2^{k+1}B|^{1-(\alpha/d)}}\int_{2^{k+1}B}\left(1+\frac{r}{\rho(x_0)}\right)^{N\cdot\frac{N_0}{N_0+1}}
\left(1+\frac{2^{k+1}r}{\rho(x_0)}\right)^{-N}|f(y)|\,dy.
\end{align}
We consider each term in the sum of \eqref{Tf2} separately. By using H\"older's inequality and $A^{\rho,\theta'}_{p,q}$ condition on $w$, we obtain that for each $k\geq1$,
\begin{equation*}
\begin{split}
&\frac{1}{|2^{k+1}B|^{1-(\alpha/d)}}\int_{2^{k+1}B}\big|f(y)\big|\,dy\\
&\leq\frac{1}{|2^{k+1}B|^{1-(\alpha/d)}}\bigg(\int_{2^{k+1}B}\big|f(y)\big|^pw^p(y)\,dy\bigg)^{1/p}
\bigg(\int_{2^{k+1}B}w(y)^{-{p'}}\,dy\bigg)^{1/{p'}}\\
&\leq C\big\|f\big\|_{L^{p,\kappa}_{\rho,\theta}(w^p,w^q)}\cdot\frac{w^q(2^{k+1}B)^{{\kappa}/p}}{w^q(2^{k+1}B)^{1/q}}
\left(1+\frac{2^{k+1}r}{\rho(x_0)}\right)^{\theta}\left(1+\frac{2^{k+1}r}{\rho(x_0)}\right)^{\theta'}.
\end{split}
\end{equation*}
Hence,
\begin{equation*}
\begin{split}
I_2&\leq C\big\|f\big\|_{L^{p,\kappa}_{\rho,\theta}(w^p,w^q)}\cdot\frac{w^q(B)^{1/q}}{w^q(B)^{{\kappa}/p}}
\sum_{k=1}^\infty\frac{w^q(2^{k+1}B)^{{\kappa}/p}}{w^q(2^{k+1}B)^{1/q}}
\left(1+\frac{r}{\rho(x_0)}\right)^{N\cdot\frac{N_0}{N_0+1}}\left(1+\frac{2^{k+1}r}{\rho(x_0)}\right)^{-N+\theta+\theta'}\\
&=C\big\|f\big\|_{L^{p,\kappa}_{\rho,\theta}(w^p,w^q)}\left(1+\frac{r}{\rho(x_0)}\right)^{N\cdot\frac{N_0}{N_0+1}}
\sum_{k=1}^\infty\frac{w^q(B)^{1/q-\kappa/p}}{w^q(2^{k+1}B)^{1/q-\kappa/p}}
\left(1+\frac{2^{k+1}r}{\rho(x_0)}\right)^{-N+\theta+\theta'}.
\end{split}
\end{equation*}
Recall that $w^q\in A^{\rho,\theta'\cdot\frac{1}{1/q+1/{p'}}}_t$ with $t=1+q/{p'}$ and $0<\theta'<\infty$, then there exist two positive numbers $\delta,\eta>0$ such that \eqref{compare} holds. This allows us to obtain
\begin{equation*}
\begin{split}
I_2&\leq C\big\|f\big\|_{L^{p,\kappa}_{\rho,\theta}(w^p,w^q)}
\left(1+\frac{r}{\rho(x_0)}\right)^{N\cdot\frac{N_0}{N_0+1}}\sum_{k=1}^\infty\left(\frac{|B|}{|2^{k+1}B|}\right)^{\delta{(1/q-\kappa/p)}}\\
&\quad\times\left(1+\frac{2^{k+1}r}{\rho(x_0)}\right)^{\eta{(1/q-\kappa/p)}}\left(1+\frac{2^{k+1}r}{\rho(x_0)}\right)^{-N+\theta+\theta'}\\
&=C\big\|f\big\|_{L^{p,\kappa}_{\rho,\theta}(w^p,w^q)}\left(1+\frac{r}{\rho(x_0)}\right)^{N\cdot\frac{N_0}{N_0+1}}\\
&\quad\sum_{k=1}^\infty\left(\frac{|B|}{|2^{k+1}B|}\right)^{\delta{(1/q-\kappa/p)}}
\left(1+\frac{2^{k+1}r}{\rho(x_0)}\right)^{-N+\theta+\theta'+\eta{(1/q-\kappa/p)}}.
\end{split}
\end{equation*}
Thus, by choosing $N$ large enough so that $N>\theta+\theta'+\eta{(1/q-\kappa/p)}$, and the last series is convergent, we then have
\begin{equation*}
\begin{split}
I_2&\leq C\big\|f\big\|_{L^{p,\kappa}_{\rho,\theta}(w^p,w^q)}
\left(1+\frac{r}{\rho(x_0)}\right)^{N\cdot\frac{N_0}{N_0+1}}\sum_{k=1}^\infty\left(\frac{|B|}{|2^{k+1}B|}\right)^{\delta{(1/q-\kappa/p)}}\\
&\leq C\left(1+\frac{r}{\rho(x_0)}\right)^{N\cdot\frac{N_0}{N_0+1}},
\end{split}
\end{equation*}
where the last inequality follows from the fact that $1/q-\kappa/p>0$. Summing up the above estimates for $I_1$ and $I_2$ and letting $\vartheta=\max\big\{\vartheta',N\cdot\frac{N_0}{N_0+1}\big\}$, we obtain our desired inequality \eqref{Main1}. This completes the proof of Theorem \ref{mainthm:1}.
\end{proof}

\begin{proof}[Proof of Theorem $\ref{mainthm:2}$]
To prove Theorem \ref{mainthm:2}, by definition, it suffices to prove that for each given ball $B=B(x_0,r)$ of $\mathbb R^d$, there is some $\vartheta>0$ such that
\begin{equation}\label{Main2}
\frac{1}{w^q(B)^{\kappa}}\sup_{\lambda>0}\lambda\cdot \Big[w^q\big(\big\{x\in B:|\mathcal I_{\alpha} f(x)|>\lambda\big\}\big)\Big]^{1/q}
\leq C\cdot\left(1+\frac{r}{\rho(x_0)}\right)^{\vartheta}
\end{equation}
holds for any $f\in L^{1,\kappa}_{\rho,\infty}(w,w^q)$ with $1<q<\infty$ and $0<\kappa<1/q$. Now suppose that $f\in L^{1,\kappa}_{\rho,\theta}(w,w^q)$ for some $\theta>0$ and $w\in A^{\rho,\theta'}_{1,q}$ for some $\theta'>0$. We decompose the function $f$ as
\begin{equation*}
\begin{cases}
f=f_1+f_2\in L^{1,\kappa}_{\rho,\theta}(w,w^q);\  &\\
f_1=f\cdot\chi_{2B};\  &\\
f_2=f\cdot\chi_{(2B)^c}.
\end{cases}
\end{equation*}
Then for any given $\lambda>0$, by the linearity of $\mathcal I_{\alpha}$, we can write
\begin{equation*}
\begin{split}
&\frac{1}{w^q(B)^{\kappa}}\lambda\cdot \Big[w^q\big(\big\{x\in B:|\mathcal I_{\alpha}f(x)|>\lambda\big\}\big)\Big]^{1/q}\\
&\leq\frac{1}{w^q(B)^{\kappa}}\lambda\cdot \Big[w^q\big(\big\{x\in B:|\mathcal I_{\alpha}f_1(x)|>\lambda/2\big\}\big)\Big]^{1/q}\\
&+\frac{1}{w^q(B)^{\kappa}}\lambda\cdot \Big[w^q\big(\big\{x\in B:|\mathcal I_{\alpha}f_2(x)|>\lambda/2\big\}\big)\Big]^{1/q}\\
&:=I'_1+I'_2.
\end{split}
\end{equation*}
We first give the estimate for the term $I'_1$. By Theorem \ref{weak}, we get
\begin{equation*}
\begin{split}
I'_1&=\frac{1}{w^q(B)^{\kappa}}\lambda\cdot\Big[w^q\big(\big\{x\in B:|\mathcal I_{\alpha} f_1(x)|>\lambda/2\big\}\big)\Big]^{1/q}\\
&\leq C\cdot\frac{1}{w^q(B)^{\kappa}}\bigg(\int_{\mathbb R^d}\big|f_1(x)\big|w(x)\,dx\bigg)\\
&=C\cdot\frac{1}{w^q(B)^{\kappa}}\bigg(\int_{2B}\big|f(x)\big|w(x)\,dx\bigg)\\
&\leq C\big\|f\big\|_{L^{1,\kappa}_{\rho,\theta}(w,w^q)}\cdot\frac{w^q(2B)^{\kappa}}{w^q(B)^{\kappa}}\cdot\left(1+\frac{2r}{\rho(x_0)}\right)^{\theta}.
\end{split}
\end{equation*}
Since $w\in A^{\rho,\theta'}_{1,q}$ with $1<q<\infty$ and $0<\theta'<\infty$, then we know that $w^q\in A^{\rho,\theta'\cdot q}_1$ according to Lemma \ref{Apq}. We now claim that there exists a dimensional constant $C>0$ independent of $v$ and $B$ such that for every weight $v\in A^{\rho,\tau}_1$ and every ball $B$ in $\mathbb R^d$,
\begin{equation}\label{doubling2}
v\big(2B(x_0,r)\big)\leq C\cdot\left(1+\frac{2r}{\rho(x_0)}\right)^{\tau}v\big(B(x_0,r)\big).
\end{equation}
Similar to the proof of \eqref{doubling1}, by the definition of $A^{\rho,\tau}_1$, we can deduce that
\begin{equation*}
\begin{split}
\frac{1}{|2B|}\int_{2B}\big|\hbar(x)\big|\,dx
&\leq\frac{C_v}{v(2B)}\cdot\underset{x\in 2B}{\mbox{ess\,inf}}\;v(x)
\bigg(\int_{2B}\big|\hbar(x)\big|\,dx\bigg)\left(1+\frac{2r}{\rho(x_0)}\right)^{\tau}\\
&\leq \frac{C_v}{v(2B)}\bigg(\int_{2B}\big|\hbar(x)\big|v(x)\,dx\bigg)\left(1+\frac{2r}{\rho(x_0)}\right)^{\tau}.
\end{split}
\end{equation*}
If we choose $\hbar(x)=\chi_B(x)$, then the above expression becomes
\begin{equation}\label{W2}
\frac{|B|}{|2B|}\leq C_v\cdot\frac{v(B)}{v(2B)}\left(1+\frac{2r}{\rho(x_0)}\right)^{\tau},
\end{equation}
which in turn implies \eqref{doubling2}. Therefore, in view of \eqref{doubling2} and \eqref{2rx},
\begin{equation*}
\begin{split}
I'_1
&\leq C\cdot\left(1+\frac{2r}{\rho(x_0)}\right)^{(\theta'q)\cdot\kappa}\cdot\left(1+\frac{2r}{\rho(x_0)}\right)^{\theta}\\
&=C\cdot\left(1+\frac{2r}{\rho(x_0)}\right)^{\vartheta'}\leq C\cdot\left(1+\frac{r}{\rho(x_0)}\right)^{\vartheta'},
\end{split}
\end{equation*}
where $\vartheta':=\theta'q\kappa+\theta$. As for the second term $I'_2$, by using the pointwise inequality \eqref{Tf2} and Chebyshev's inequality, we deduce that
\begin{equation}\label{Tf2pr}
\begin{split}
I'_2&=\frac{1}{w^q(B)^{\kappa}}\lambda\cdot\Big[w^q\big(\big\{x\in B:|\mathcal I_{\alpha}f_2(x)|>\lambda/2\big\}\big)\Big]^{1/q}\\
&\leq\frac{2}{w^q(B)^{\kappa}}\bigg(\int_{B}\big|\mathcal I_{\alpha}f_2(x)\big|^qw^q(x)\,dx\bigg)^{1/q}\\
&\leq C\cdot\frac{w^q(B)^{1/q}}{w^q(B)^{\kappa}}
\sum_{k=1}^\infty\frac{1}{|2^{k+1}B|^{1-(\alpha/d)}}\int_{2^{k+1}B}\left(1+\frac{r}{\rho(x_0)}\right)^{N\cdot\frac{N_0}{N_0+1}}
\left(1+\frac{2^{k+1}r}{\rho(x_0)}\right)^{-N}|f(y)|\,dy.
\end{split}
\end{equation}
We consider each term in the sum of \eqref{Tf2pr} separately. By Lemma \ref{relation}, we know that $w\in A^{\rho,\theta'}_1\bigcap RH^{\rho,\theta'}_q$ because $w^q$ is in $A^{\rho,\theta'\cdot q}_{1}$. Thus, for each $k\geq1$, we compute
\begin{equation*}
\begin{split}
&\frac{1}{|2^{k+1}B|^{1-(\alpha/d)}}\int_{2^{k+1}B}\big|f(y)\big|\,dy\\
&\leq C_w\cdot\frac{|2^{k+1}B|^{\alpha/d}}{w(2^{k+1}B)}\cdot\underset{y\in 2^{k+1}B}{\mbox{ess\,inf}}\;w(y)
\bigg(\int_{2^{k+1}B}\big|f(y)\big|\,dy\bigg)\left(1+\frac{2^{k+1}r}{\rho(x_0)}\right)^{\theta'}\\
&\leq C_w\cdot\frac{|2^{k+1}B|^{\alpha/d}}{w(2^{k+1}B)}\bigg(\int_{2^{k+1}B}\big|f(y)\big|w(y)\,dy\bigg)\left(1+\frac{2^{k+1}r}{\rho(x_0)}\right)^{\theta'}\\
&\leq C\big\|f\big\|_{L^{1,\kappa}_{\rho,\theta}(w,w^q)}\cdot\frac{|2^{k+1}B|^{\alpha/d}}{w(2^{k+1}B)}w^q\big(2^{k+1}B\big)^{\kappa}
\left(1+\frac{2^{k+1}r}{\rho(x_0)}\right)^{\theta}\left(1+\frac{2^{k+1}r}{\rho(x_0)}\right)^{\theta'}.
\end{split}
\end{equation*}
In addition, by reverse H\"older-type inequality ($w\in RH^{\rho,\theta'}_q$), we can see that
\begin{equation*}
\begin{split}
w^q\big(2^{k+1}B\big)^{1/q}&=\left(\int_{2^{k+1}B} w^{q}(y)\,dy\right)^{1/q}\\
&\leq C\big|2^{k+1}B\big|^{1/q}\left(\frac{1}{|2^{k+1}B|}\int_{2^{k+1}B} w(y)\,dy\right)\left(1+\frac{2^{k+1}r}{\rho(x_0)}\right)^{\theta'}.
\end{split}
\end{equation*}
This indicates that
\begin{equation*}
\frac{|2^{k+1}B|^{\alpha/d}}{w(2^{k+1}B)}\leq C\cdot\frac{1}{w^q(2^{k+1}B)^{1/q}}\left(1+\frac{2^{k+1}r}{\rho(x_0)}\right)^{\theta'}.
\end{equation*}
Consequently,
\begin{equation*}
\begin{split}
I'_2&\leq C\big\|f\big\|_{L^{1,\kappa}_{\rho,\theta}(w,w^q)}
\cdot\frac{w^q(B)^{1/q}}{w^q(B)^{\kappa}}\sum_{k=1}^\infty\frac{w^q(2^{k+1}B)^{\kappa}}{w^q(2^{k+1}B)^{1/q}}
\left(1+\frac{r}{\rho(x_0)}\right)^{N\cdot\frac{N_0}{N_0+1}}\left(1+\frac{2^{k+1}r}{\rho(x_0)}\right)^{-N+\theta+2\theta'}\\
&=C\big\|f\big\|_{L^{1,\kappa}_{\rho,\theta}(w,w^q)}
\left(1+\frac{r}{\rho(x_0)}\right)^{N\cdot\frac{N_0}{N_0+1}}\sum_{k=1}^\infty\frac{w^q(B)^{{1/q-\kappa}}}{w^q(2^{k+1}B)^{{1/q-\kappa}}}
\left(1+\frac{2^{k+1}r}{\rho(x_0)}\right)^{-N+\theta+2\theta'}.
\end{split}
\end{equation*}
Recall that $w^q\in A^{\rho,\theta'\cdot q}_1$ with $0<\theta'<\infty$ and $1<q<\infty$, then there exist two positive numbers $\delta',\eta'>0$ such that \eqref{compare} holds. Therefore,
\begin{equation*}
\begin{split}
I'_2&\leq C\big\|f\big\|_{L^{1,\kappa}_{\rho,\theta}(w,w^q)}
\left(1+\frac{r}{\rho(x_0)}\right)^{N\cdot\frac{N_0}{N_0+1}}\sum_{k=1}^\infty\left(\frac{|B|}{|2^{k+1}B|}\right)^{\delta'{(1/q-\kappa)}}\\
&\quad\times\left(1+\frac{2^{k+1}r}{\rho(x_0)}\right)^{\eta'{(1/q-\kappa)}}\left(1+\frac{2^{k+1}r}{\rho(x_0)}\right)^{-N+\theta+2\theta'}\\
&=C\big\|f\big\|_{L^{1,\kappa}_{\rho,\theta}(w,w^q)}
\left(1+\frac{r}{\rho(x_0)}\right)^{N\cdot\frac{N_0}{N_0+1}}\\
&\quad\times\sum_{k=1}^\infty\left(\frac{|B|}{|2^{k+1}B|}\right)^{\delta'{(1/q-\kappa)}}
\left(1+\frac{2^{k+1}r}{\rho(x_0)}\right)^{-N+\theta+2\theta'+\eta'{(1/q-\kappa)}}.
\end{split}
\end{equation*}
By selecting $N$ large enough so that $N>\theta+2\theta'+\eta'{(1/q-\kappa)}$, we thus have
\begin{equation*}
\begin{split}
I'_2&\leq C\left(1+\frac{r}{\rho(x_0)}\right)^{N\cdot\frac{N_0}{N_0+1}}\sum_{k=1}^\infty\left(\frac{|B|}{|2^{k+1}B|}\right)^{\delta'{(1/q-\kappa)}}\\
&\leq C\left(1+\frac{r}{\rho(x_0)}\right)^{N\cdot\frac{N_0}{N_0+1}},
\end{split}
\end{equation*}
where the last inequality is due to $0<\kappa<1/q$. Let $\vartheta=\max\big\{\vartheta',N\cdot\frac{N_0}{N_0+1}\big\}$. Here $N$ is an appropriate constant. Summing up the above estimates for $I'_1$ and $I'_2$, and then taking the supremum over all $\lambda>0$, we obtain the desired inequality \eqref{Main2}. This finishes the proof of Theorem \ref{mainthm:2}.
\end{proof}

\section{Proof of Theorem $\ref{mainthm:3}$}

For the result involving commutators of fractional integrals associated to Schr\"odinger operators, we need the following properties of $\mathrm{BMO}_{\rho,\infty}(\mathbb R^d)$ functions, which are extensions of well-known properties of $\mathrm{BMO}(\mathbb R^d)$ functions.
\begin{lem}\label{BMO1}
If $b\in \mathrm{BMO}_{\rho,\infty}(\mathbb R^d)$ and $w\in A^{\rho,\infty}_p$ with $1\leq p<\infty$, then there exist positive constants $C>0$ and $\mu>0$ such that for every ball $B=B(x_0,r)$ in $\mathbb R^d$, we have
\begin{equation}\label{wang3}
\bigg(\int_B\big|b(x)-b_B\big|^pw(x)\,dx\bigg)^{1/p}\leq C\cdot w(B)^{1/p}\left(1+\frac{r}{\rho(x_0)}\right)^{\mu},
\end{equation}
where $b_B=\frac{1}{|B|}\int_B b(y)\,dy$.
\end{lem}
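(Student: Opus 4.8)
The plan is to derive \eqref{wang3} from the weighted exponential decay estimate of Proposition \ref{wangbmo} by integrating the distribution function. Since $b\in\mathrm{BMO}_{\rho,\infty}(\mathbb R^d)$, there is some $\theta>0$ with $b\in\mathrm{BMO}_{\rho,\theta}(\mathbb R^d)$, and since $w\in A^{\rho,\infty}_p$, there is some $\theta'>0$ with $w\in A^{\rho,\theta'}_p$; we work with these fixed parameters, and we note that the mean value $b_{B(x_0,r)}$ appearing in Proposition \ref{wangbmo} coincides with $b_B$ here. Fix a ball $B=B(x_0,r)$. First I would use the layer-cake formula to write
\[
\int_B\big|b(x)-b_B\big|^pw(x)\,dx
= p\int_0^{\infty}\lambda^{p-1}\,w\big(\big\{x\in B:|b(x)-b_B|>\lambda\big\}\big)\,d\lambda,
\]
and then insert the bound \eqref{wang}, which supplies positive constants $C_1,C_2,\eta$ and the exponent $\theta^{\ast}=(N_0+1)\theta$ so that the right-hand side is at most
\[
C_1\,p\,w(B)\left(1+\frac{r}{\rho(x_0)}\right)^{\eta}\int_0^{\infty}\lambda^{p-1}
\exp\!\bigg[-\bigg(1+\frac{r}{\rho(x_0)}\bigg)^{-\theta^{\ast}}\frac{C_2\lambda}{\|b\|_{\mathrm{BMO}_{\rho,\theta}}}\bigg]\,d\lambda.
\]

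Next I would evaluate the remaining one-dimensional integral via the substitution $u=\big(1+\tfrac{r}{\rho(x_0)}\big)^{-\theta^{\ast}}\tfrac{C_2\lambda}{\|b\|_{\mathrm{BMO}_{\rho,\theta}}}$, which turns it into $\big(\|b\|_{\mathrm{BMO}_{\rho,\theta}}/C_2\big)^{p}\big(1+\tfrac{r}{\rho(x_0)}\big)^{p\theta^{\ast}}\,\Gamma(p)$; here finiteness of $\Gamma(p)$ uses $p<\infty$. Combining the two displays gives
\[
\int_B\big|b(x)-b_B\big|^pw(x)\,dx\leq C\,w(B)\left(1+\frac{r}{\rho(x_0)}\right)^{\eta+p\theta^{\ast}},
\]
with $C$ depending on $p$, $d$, $\|b\|_{\mathrm{BMO}_{\rho,\theta}}$ and the $A^{\rho,\theta'}_p$ constant of $w$, but not on $B$. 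Taking $p$-th roots yields \eqref{wang3} with $\mu=\theta^{\ast}+\eta/p$.

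Since the hard analytic content — the weighted John--Nirenberg-type estimate — is already packaged in Proposition \ref{wangbmo}, the argument above is essentially a routine computation. The only point needing a little care is bookkeeping of the powers of $\big(1+r/\rho(x_0)\big)$: one must carry along both the polynomial factor $\big(1+r/\rho(x_0)\big)^{\eta}$ coming from \eqref{wang} and the factor $\big(1+r/\rho(x_0)\big)^{p\theta^{\ast}}$ produced by the rescaling in the $\lambda$-integral, and then absorb the two into the single exponent $\mu$. I do not expect any genuine obstacle beyond this.
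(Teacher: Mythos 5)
Your proposal is correct and follows essentially the same route as the paper: both apply the layer-cake formula, insert the weighted exponential estimate \eqref{wang} from Proposition \ref{wangbmo}, evaluate the resulting integral as a Gamma function via the rescaling substitution, and arrive at the same exponent $\mu=\theta^{\ast}+\eta/p$. No issues.
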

\begin{proof}
We may assume that $b\in \mathrm{BMO}_{\rho,\theta}(\mathbb R^d)$ with $0<\theta<\infty$. According to Proposition \ref{wangbmo}, we can deduce that
\begin{equation*}
\begin{split}
&\bigg(\int_B\big|b(x)-b_B\big|^pw(x)\,dx\bigg)^{1/p}\\
&=\bigg(\int_0^{\infty}p\lambda^{p-1}w\big(\big\{x\in B:|b(x)-b_{B}|>\lambda\big\}\big)d\lambda\bigg)^{1/p}\\
&\leq C_1^{1/p}\cdot w(B)^{1/p}\bigg\{\int_0^{\infty}p\lambda^{p-1}\exp\bigg[-\bigg(1+\frac{r}{\rho(x_0)}\bigg)^{-\theta^{\ast}}\frac{C_2 \lambda}{\|b\|_{\mathrm{BMO}_{\rho,\theta}}}\bigg]
\left(1+\frac{r}{\rho(x_0)}\right)^{\eta}d\lambda\bigg\}^{1/p}\\
&=C_1^{1/p}\cdot w(B)^{1/p}
\bigg\{\int_0^{\infty}p\lambda^{p-1}\exp\bigg[-\bigg(1+\frac{r}{\rho(x_0)}\bigg)^{-\theta^{\ast}}\frac{C_2 \lambda}{\|b\|_{\mathrm{BMO}_{\rho,\theta}}}\bigg]d\lambda\bigg\}^{1/p}
\left(1+\frac{r}{\rho(x_0)}\right)^{\eta/p}.
\end{split}
\end{equation*}
Making change of variables, then we get
\begin{equation*}
\begin{split}
&\bigg(\int_B\big|b(x)-b_B\big|^pw(x)\,dx\bigg)^{1/p}\\
&\leq C_1^{1/p}\cdot w(B)^{1/p}\bigg(\int_0^{\infty}ps^{p-1}e^{-s}ds\bigg)^{1/p}
\left(\frac{\|b\|_{\mathrm{BMO}_{\rho,\theta}}}{C_2}\right)\left(1+\frac{r}{\rho(x_0)}\right)^{\theta^{\ast}}\left(1+\frac{r}{\rho(x_0)}\right)^{\eta/p}\\
&=\big[C_1p\Gamma(p)\big]^{1/p}\left(\frac{\|b\|_{\mathrm{BMO}_{\rho,\theta}}}{C_2}\right)\cdot w(B)^{1/p}
\left(1+\frac{r}{\rho(x_0)}\right)^{\theta^{\ast}+\eta/p},
\end{split}
\end{equation*}
which yields the desired inequality \eqref{wang3}, if we choose $C=\big[C_1p\Gamma(p)\big]^{1/p}C_2^{-1}\|b\|_{\mathrm{BMO}_{\rho,\theta}}$ and $\mu=\theta^{\ast}+\eta/p$.
\end{proof}

\begin{lem}\label{BMO2}
If $b\in \mathrm{BMO}_{\rho,\theta}(\mathbb R^d)$ with $0<\theta<\infty$, then for any positive integer $k$, there exists a positive constant $C>0$ such that for every ball $B=B(x_0,r)$ in $\mathbb R^d$, we have
\begin{equation*}
\big|b_{2^{k+1}B}-b_B\big|\leq C\cdot(k+1)\left(1+\frac{2^{k+1}r}{\rho(x_0)}\right)^{\theta}.
\end{equation*}
\end{lem}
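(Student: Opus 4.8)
The plan is to use the familiar telescoping argument for mean oscillations, keeping careful track of the $\rho$-dependent factors. First I would estimate the difference of the averages of $b$ over two consecutive dyadic dilates $2^{j}B$ and $2^{j+1}B$ of the ball $B=B(x_0,r)$, for $0\le j\le k$. Since $2^{j}B\subset 2^{j+1}B$ and $|2^{j+1}B|=2^{d}|2^{j}B|$, we have
\begin{equation*}
\big|b_{2^{j+1}B}-b_{2^{j}B}\big|
=\bigg|\frac{1}{|2^{j}B|}\int_{2^{j}B}\big(b(x)-b_{2^{j+1}B}\big)\,dx\bigg|
\le\frac{2^{d}}{|2^{j+1}B|}\int_{2^{j+1}B}\big|b(x)-b_{2^{j+1}B}\big|\,dx,
\end{equation*}
and the defining inequality \eqref{BM} of $\mathrm{BMO}_{\rho,\theta}(\mathbb R^d)$, applied to the ball $2^{j+1}B$ (whose center is still $x_0$), yields
\begin{equation*}
\big|b_{2^{j+1}B}-b_{2^{j}B}\big|\le 2^{d}\,\|b\|_{\mathrm{BMO}_{\rho,\theta}}\left(1+\frac{2^{j+1}r}{\rho(x_0)}\right)^{\theta}.
\end{equation*}

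Next I would telescope. Writing $b_{2^{k+1}B}-b_{B}=\sum_{j=0}^{k}\big(b_{2^{j+1}B}-b_{2^{j}B}\big)$ and applying the triangle inequality together with the bound just obtained gives
\begin{equation*}
\big|b_{2^{k+1}B}-b_{B}\big|\le 2^{d}\,\|b\|_{\mathrm{BMO}_{\rho,\theta}}\sum_{j=0}^{k}\left(1+\frac{2^{j+1}r}{\rho(x_0)}\right)^{\theta}.
\end{equation*}
Finally, since $2^{j+1}r\le 2^{k+1}r$ for $0\le j\le k$ and the function $\lambda\mapsto(1+\lambda)^{\theta}$ is nondecreasing on $[0,\infty)$, each of the $k+1$ summands is at most $\big(1+2^{k+1}r/\rho(x_0)\big)^{\theta}$, which gives the asserted estimate with $C=2^{d}\|b\|_{\mathrm{BMO}_{\rho,\theta}}$.

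There is no genuine difficulty here; the only point deserving attention is that the factor in \eqref{BM} depends on the radius of the averaging ball, so at the $j$-th step one must use the exponent $2^{j+1}r$ and not prematurely replace it by $r$. The monotonicity of $(1+\lambda)^{\theta}$ in $\lambda$ is exactly what allows one to collapse the resulting sum at the end; alternatively one could invoke \eqref{2rx} repeatedly, but bounding every summand by the largest one is cleaner and immediately produces the linear factor $k+1$.
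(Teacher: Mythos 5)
Your proof is correct and follows essentially the same route as the paper: a telescoping sum over consecutive dyadic dilates, the comparison $\big|b_{2^{j+1}B}-b_{2^{j}B}\big|\le 2^{d}\cdot\frac{1}{|2^{j+1}B|}\int_{2^{j+1}B}|b-b_{2^{j+1}B}|$, the defining inequality \eqref{BM} on each dilate, and monotonicity of $\lambda\mapsto(1+\lambda)^{\theta}$ to bound every summand by the largest one, producing the factor $k+1$. No issues.
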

\begin{proof}
For any positive integer $k$, we have
\begin{equation*}
\begin{split}
\big|b_{2^{k+1}B}-b_B\big|&\leq \sum_{j=1}^{k+1}\big|b_{2^{j}B}-b_{2^{j-1}B}\big|\\
&=\sum_{j=1}^{k+1}\bigg|\frac{1}{|2^{j-1}B|}\int_{2^{j-1}B}\big[b_{2^{j}B}-b(y)\big]dy\bigg|\\
&\leq\sum_{j=1}^{k+1}\frac{2^d}{|2^{j}B|}\int_{2^{j}B}\big|b(y)-b_{2^{j}B}\big|dy\\
&\leq C_{b,d}\|b\|_{\mathrm{BMO}_{\rho,\theta}}\sum_{j=1}^{k+1}\left(1+\frac{2^jr}{\rho(x_0)}\right)^{\theta}.
\end{split}
\end{equation*}
Since for any $1\leq j\leq k+1$, the following estimate
\begin{equation*}
\left(1+\frac{2^jr}{\rho(x_0)}\right)^{\theta}\leq\left(1+\frac{2^{k+1}r}{\rho(x_0)}\right)^{\theta}
\end{equation*}
holds trivially, and hence
\begin{equation*}
\big|b_{2^{k+1}B}-b_B\big|\leq C\sum_{j=1}^{k+1}\left(1+\frac{2^{k+1}r}{\rho(x_0)}\right)^{\theta}=C\cdot(k+1)\left(1+\frac{2^{k+1}r}{\rho(x_0)}\right)^{\theta}.
\end{equation*}
This is just our desired conclusion.
\end{proof}

Now, we are ready to prove the main result of this section.

\begin{proof}[Proof of Theorem $\ref{mainthm:3}$]
By definition, we only need to prove that for an arbitrary ball $B=B(x_0,r)$ of $\mathbb R^d$ and $0<\alpha<d$, there is some $\vartheta>0$ such that
\begin{equation}\label{Main3}
\bigg(\frac{1}{w^q(B)^{\kappa q/p}}\int_B\big|[b,\mathcal I_{\alpha}]f(x)\big|^qw^q(x)\,dx\bigg)^{1/q}\leq C\cdot\left(1+\frac{r}{\rho(x_0)}\right)^{\vartheta}
\end{equation}
holds for any $f\in L^{p,\kappa}_{\rho,\infty}(w^p,w^q)$ with $1<p<q<\infty$ and $0<\kappa<p/q$, whenever $b$ belongs to $\mathrm{BMO}_{\rho,\infty}(\mathbb R^d)$. Suppose that $f\in L^{p,\kappa}_{\rho,\theta}(w^p,w^q)$ for some $\theta>0$, $w\in A^{\rho,\theta'}_{p,q}$ for some $\theta'>0$ as well as $b\in \mathrm{BMO}_{\rho,\theta''}(\mathbb R^d)$ for some $\theta''>0$. As before, we decompose the function $f$ as
\begin{equation*}
\begin{cases}
f=f_1+f_2\in L^{p,\kappa}_{\rho,\theta}(w^p,w^q);\  &\\
f_1=f\cdot\chi_{2B};\  &\\
f_2=f\cdot\chi_{(2B)^c}.
\end{cases}
\end{equation*}
Then by the linearity of $[b,\mathcal I_{\alpha}]$, we write
\begin{equation*}
\begin{split}
\bigg(\frac{1}{w^q(B)^{\kappa q/p}}\int_B\big|[b,\mathcal I_{\alpha}]f(x)\big|^qw^q(x)\,dx\bigg)^{1/q}
&\leq\bigg(\frac{1}{w^q(B)^{\kappa q/p}}\int_B\big|[b,\mathcal I_{\alpha}]f_1(x)\big|^qw^q(x)\,dx\bigg)^{1/q}\\
&+\bigg(\frac{1}{w^q(B)^{\kappa q/p}}\int_B\big|[b,\mathcal I_{\alpha}]f_2(x)\big|^qw^q(x)\,dx\bigg)^{1/q}\\
&:=J_1+J_2.
\end{split}
\end{equation*}
Now we give the estimates for $J_1$, $J_2$, respectively. According to Theorem \ref{cstrong}, we have
\begin{equation*}
\begin{split}
J_1&\leq C\cdot\frac{1}{w^q(B)^{\kappa/p}}\bigg(\int_{\mathbb R^d}\big|f_1(x)\big|^pw^p(x)\,dx\bigg)^{1/p}\\
&=C\cdot\frac{1}{w^q(B)^{\kappa/p}}\bigg(\int_{2B}\big|f(x)\big|^pw^p(x)\,dx\bigg)^{1/p}\\
&\leq C\big\|f\big\|_{L^{p,\kappa}_{\rho,\theta}(w^p,w^q)}
\cdot\frac{w^q(2B)^{\kappa/p}}{w^q(B)^{\kappa/p}}\cdot\left(1+\frac{2r}{\rho(x_0)}\right)^{\theta}.
\end{split}
\end{equation*}
Moreover, notice that $w^q\in A^{\rho,\theta'\cdot\frac{1}{1/q+1/{p'}}}_t$ with $t=1+q/{p'}$ by Lemma \ref{Apq}, and that
\begin{equation*}
t\theta'\cdot\frac{1}{1/q+1/{p'}}=\big(1+q/{p'}\big)\theta'\cdot\frac{1}{1/q+1/{p'}}=q\theta'.
\end{equation*}
Thus, in view of the inequalities \eqref{doubling1} and \eqref{2rx}, we get
\begin{equation*}
\begin{split}
J_1&\leq C\big\|f\big\|_{L^{p,\kappa}_{\rho,\theta}(w^p,w^q)}
\cdot\left(1+\frac{2r}{\rho(x_0)}\right)^{(q\theta')\cdot(\kappa/p)}\cdot\left(1+\frac{2r}{\rho(x_0)}\right)^{\theta}\\
&=C\big\|f\big\|_{L^{p,\kappa}_{\rho,\theta}(w^p,w^q)}\left(1+\frac{2r}{\rho(x_0)}\right)^{\vartheta'}
\leq C\cdot\left(1+\frac{r}{\rho(x_0)}\right)^{\vartheta'},
\end{split}
\end{equation*}
where $\vartheta':={(q\theta'\kappa)}/p+\theta$. On the other hand, by the definition \eqref{briesz}, we can see that for any $x\in B(x_0,r)$,
\begin{equation*}
\big|[b,\mathcal I_{\alpha}]f_2(x)\big|\leq\int_{\mathbb R^d}\big|b(x)-b(y)\big|\big|\mathcal K_{\alpha}(x,y)f_2(y)\big|\,dy.
\end{equation*}
Adding and subtracting $b_B$ inside the integral we write
\begin{equation}\label{twoterm}
\begin{split}
\big|[b,\mathcal I_{\alpha}]f_2(x)\big|
&\leq\big|b(x)-b_B\big|\int_{\mathbb R^d}\big|\mathcal K_{\alpha}(x,y)f_2(y)\big|\,dy+\int_{\mathbb R^d}\big|b(y)-b_B\big|\big|\mathcal K_{\alpha}(x,y)f_2(y)\big|\,dy\\
&:=\xi(x)+\zeta(x).
\end{split}
\end{equation}
So we can divide $J_2$ into two parts:
\begin{equation*}
\begin{split}
J_2&\leq\bigg(\frac{1}{w^q(B)^{\kappa q/p}}\int_B\big|\xi(x)\big|^qw^q(x)\,dx\bigg)^{1/q}
+\bigg(\frac{1}{w^q(B)^{\kappa q/p}}\int_B\big|\zeta(x)\big|^qw^q(x)\,dx\bigg)^{1/q}\\
&:=J_3+J_4.
\end{split}
\end{equation*}
To deal with the term $J_3$, since $t=1+q/{p'}<q$, one has $w^q\in A^{\rho,\infty}_t\subset A^{\rho,\infty}_q.$
From the pointwise estimate \eqref{Tf2} and \eqref{wang3} in Lemma \ref{BMO1}, it then follows that
\begin{equation*}
\begin{split}
J_3&\leq C\cdot\frac{1}{w^q(B)^{\kappa/p}}\bigg(\int_B\big|b(x)-b_B\big|^qw^q(x)\,dx\bigg)^{1/q}\\
&\quad\times\sum_{k=1}^\infty\frac{1}{|2^{k+1}B|^{1-(\alpha/d)}}\int_{2^{k+1}B}\left(1+\frac{r}{\rho(x_0)}\right)^{N\cdot\frac{N_0}{N_0+1}}
\left(1+\frac{2^{k+1}r}{\rho(x_0)}\right)^{-N}|f(y)|\,dy\\
&\leq C_b\cdot\frac{w^q(B)^{1/q}}{w^q(B)^{\kappa/p}}\left(1+\frac{r}{\rho(x_0)}\right)^{\mu}\\
&\quad\times\sum_{k=1}^\infty\frac{1}{|2^{k+1}B|^{1-(\alpha/d)}}\int_{2^{k+1}B}\left(1+\frac{r}{\rho(x_0)}\right)^{N\cdot\frac{N_0}{N_0+1}}
\left(1+\frac{2^{k+1}r}{\rho(x_0)}\right)^{-N}|f(y)|\,dy.
\end{split}
\end{equation*}
Following along the same lines as that of Theorem $\ref{mainthm:1}$, we are able to show that
\begin{equation*}
\begin{split}
J_3&\leq C\big\|f\big\|_{L^{p,\kappa}_{\rho,\theta}(w^p,w^q)}
\left(1+\frac{r}{\rho(x_0)}\right)^{\mu}\left(1+\frac{r}{\rho(x_0)}\right)^{N\cdot\frac{N_0}{N_0+1}}\\
&\quad\times \sum_{k=1}^\infty\frac{w^q(B)^{1/q-\kappa/p}}{w^q(2^{k+1}B)^{1/q-\kappa/p}}
\left(1+\frac{2^{k+1}r}{\rho(x_0)}\right)^{-N+\theta+\theta'}\\
&\leq C\big\|f\big\|_{L^{p,\kappa}_{\rho,\theta}(w^p,w^q)}
\left(1+\frac{r}{\rho(x_0)}\right)^{\mu+N\cdot\frac{N_0}{N_0+1}}\\
&\quad\times\sum_{k=1}^\infty\left(\frac{|B|}{|2^{k+1}B|}\right)^{\delta{(1/q-\kappa/p)}}
\left(1+\frac{2^{k+1}r}{\rho(x_0)}\right)^{-N+\theta+\theta'+\eta{(1/q-\kappa/p)}}.
\end{split}
\end{equation*}
The last inequality is obtained by using \eqref{compare}. Next we estimate $\zeta(x)$ for any $x\in B(x_0,r)$. For any positive integer $N$, similar to the proof of \eqref{T} and \eqref{Tf2}, we can also deduce that
\begin{equation}\label{zeta}
\begin{split}
\zeta(x)
&=\int_{(2B)^c}\big|b(y)-b_B\big|\big|\mathcal K_{\alpha}(x,y)f(y)\big|\,dy\\
&\leq C_N\int_{(2B)^c}\big|b(y)-b_B\big|\bigg(1+\frac{|x-y|}{\rho(x)}\bigg)^{-N}\frac{1}{|x-y|^{d-\alpha}}\cdot|f(y)|\,dy\\
&\leq C_{N,d}\sum_{k=1}^\infty\int_{2^{k+1}B\backslash 2^k B}\big|b(y)-b_B\big|
\bigg(1+\frac{|x_0-y|}{\rho(x)}\bigg)^{-N}\frac{1}{|x_0-y|^{d-\alpha}}\cdot|f(y)|\,dy\\
&\leq C_{N,d}\sum_{k=1}^\infty\frac{1}{|2^{k+1}B|^{1-(\alpha/d)}}\int_{2^{k+1}B\backslash 2^k B}\big|b(y)-b_B\big|\bigg(1+\frac{2^kr}{\rho(x)}\bigg)^{-N}|f(y)|\,dy\\
&\leq C\sum_{k=1}^\infty\frac{1}{|2^{k+1}B|^{1-(\alpha/d)}}\int_{2^{k+1}B}\big|b(y)-b_B\big|\left(1+\frac{r}{\rho(x_0)}\right)^{N\cdot\frac{N_0}{N_0+1}}
\left(1+\frac{2^{k+1}r}{\rho(x_0)}\right)^{-N}|f(y)|\,dy,
\end{split}
\end{equation}
where in the last inequality we have used \eqref{com2} in Lemma \ref{N0}. Hence, by the above pointwise estimate for $\zeta(x)$,
\begin{equation}\label{inner}
\begin{split}
J_4&\leq C\cdot w^q(B)^{{(1/q-\kappa/p)}}\sum_{k=1}^\infty\left(1+\frac{r}{\rho(x_0)}\right)^{N\cdot\frac{N_0}{N_0+1}}
\left(1+\frac{2^{k+1}r}{\rho(x_0)}\right)^{-N}\\
&\times\frac{1}{|2^{k+1}B|^{1-(\alpha/d)}}\int_{2^{k+1}B}\big|b(y)-b_B\big|\big|f(y)\big|\,dy.
\end{split}
\end{equation}
Let us consider each term in the sum of \eqref{inner} separately. For each integer $k\geq 1$,
\begin{align}\label{est1}
&\frac{1}{|2^{k+1}B|^{1-(\alpha/d)}}\int_{2^{k+1}B}\big|b(y)-b_B\big|\big|f(y)\big|\,dy\notag\\
&\leq\frac{1}{|2^{k+1}B|^{1-(\alpha/d)}}\int_{2^{k+1}B}\big|b(y)-b_{2^{k+1}B}\big|\big|f(y)\big|\,dy\notag\\
&+\frac{1}{|2^{k+1}B|^{1-(\alpha/d)}}\int_{2^{k+1}B}\big|b_{2^{k+1}B}-b_B\big|\big|f(y)\big|\,dy.
\end{align}
By using H\"older's inequality, the first term of the expression \eqref{est1} is bounded by
\begin{equation*}
\begin{split}
&\frac{1}{|2^{k+1}B|^{1-(\alpha/d)}}\bigg(\int_{2^{k+1}B}\big|f(y)\big|^pw^p(y)\,dy\bigg)^{1/p}
\bigg(\int_{2^{k+1}B}\big|b(y)-b_{2^{k+1}B}\big|^{p'}w(y)^{-{p'}}\,dy\bigg)^{1/{p'}}\\
&\leq C\big\|f\big\|_{L^{p,\kappa}_{\rho,\theta}(w^p,w^q)}\cdot\frac{w^q(2^{k+1}B)^{{\kappa}/p}}{|2^{k+1}B|^{1-(\alpha/d)}}
\left(1+\frac{2^{k+1}r}{\rho(x_0)}\right)^{\theta}\bigg(\int_{2^{k+1}B}\big|b(y)-b_{2^{k+1}B}\big|^{p'}w(y)^{-{p'}}\,dy\bigg)^{1/{p'}}.
\end{split}
\end{equation*}
Since $w\in A^{\rho,\theta'}_{p,q}$ with $0<\theta'<\infty$ and $1<p<q<\infty$, then by Lemma \ref{Apq}, we know that $w^{-p'}\in A^{\rho,\theta'\cdot\frac{1}{1/q+1/{p'}}}_{t'}$ with $t'=1+{p'}/q$. If we denote $v=w^{-{p'}}$, then $v\in A^{\rho,\theta'\cdot\frac{1}{1/q+1/{p'}}}_{t'}\subset A^{\rho,\infty}_{p'}$ because $t'=1+{p'}/q<p'$. This fact together with Lemma \ref{BMO1} implies
\begin{equation*}
\begin{split}
&\bigg(\int_{2^{k+1}B}\big|b(y)-b_{2^{k+1}B}\big|^{p'}v(y)\,dy\bigg)^{1/{p'}}\\
&\leq C_{b}\cdot v\big(2^{k+1}B\big)^{1/{p'}}\left(1+\frac{2^{k+1}r}{\rho(x_0)}\right)^{\mu}\\
&=C_b\cdot\bigg(\int_{2^{k+1}B}w(y)^{-{p'}}\,dy\bigg)^{1/{p'}}\left(1+\frac{2^{k+1}r}{\rho(x_0)}\right)^{\mu}\\
&\leq C_{b,w}\cdot\frac{|2^{k+1}B|^{1-(\alpha/d)}}{w^q(2^{k+1}B)^{1/q}}\left(1+\frac{2^{k+1}r}{\rho(x_0)}\right)^{\theta'}
\left(1+\frac{2^{k+1}r}{\rho(x_0)}\right)^{\mu}.
\end{split}
\end{equation*}
Therefore, the first term of the expression \eqref{est1} can be bounded by a constant times
\begin{equation*}
\frac{w^q(2^{k+1}B)^{{\kappa}/p}}{w^q(2^{k+1}B)^{1/q}}\left(1+\frac{2^{k+1}r}{\rho(x_0)}\right)^{\theta+\theta'+\mu}.
\end{equation*}
Since $b\in \mathrm{BMO}_{\rho,\theta''}(\mathbb R^d)$ with $0<\theta''<\infty$, then by Lemma \ref{BMO2}, H\"older's inequality and the $A^{\rho,\theta'}_{p,q}$ condition on $w$, the latter term of the expression \eqref{est1} can be estimated by
\begin{equation*}
\begin{split}
&C_b(k+1)\left(1+\frac{2^{k+1}r}{\rho(x_0)}\right)^{\theta''}\frac{1}{|2^{k+1}B|^{1-(\alpha/d)}}\int_{2^{k+1}B}\big|f(y)\big|\,dy\\
\leq& C_b(k+1)\left(1+\frac{2^{k+1}r}{\rho(x_0)}\right)^{\theta''}\frac{1}{|2^{k+1}B|^{1-(\alpha/d)}}
\bigg(\int_{2^{k+1}B}\big|f(y)\big|^pw^p(y)\,dy\bigg)^{1/p}\bigg(\int_{2^{k+1}B}w(y)^{-{p'}}\,dy\bigg)^{1/{p'}}\\
\leq& C\big\|f\big\|_{L^{p,\kappa}_{\rho,\theta}(w^p,w^q)}\cdot(k+1)\frac{w^q(2^{k+1}B)^{{\kappa}/p}}{w^q(2^{k+1}B)^{1/q}}
\left(1+\frac{2^{k+1}r}{\rho(x_0)}\right)^{\theta+\theta'+\theta''}.
\end{split}
\end{equation*}
Consequently,
\begin{align}\label{est2}
&\frac{1}{|2^{k+1}B|^{1-(\alpha/d)}}\int_{2^{k+1}B}\big|b(y)-b_B\big|\big|f(y)\big|\,dy\notag\\
&\leq C\big\|f\big\|_{L^{p,\kappa}_{\rho,\theta}(w^p,w^q)}\cdot(k+1)\frac{w^q(2^{k+1}B)^{{\kappa}/p}}{w^q(2^{k+1}B)^{1/q}}
\left(1+\frac{2^{k+1}r}{\rho(x_0)}\right)^{\theta+\theta'+\theta''+\mu}.
\end{align}
Substituting the above inequality \eqref{est2} into \eqref{inner}, we thus obtain
\begin{equation*}
\begin{split}
J_4&\leq C\big\|f\big\|_{L^{p,\kappa}_{\rho,\theta}(w^p,w^q)}\cdot w^q(B)^{{(1/q-\kappa/p)}} \sum_{k=1}^\infty(k+1)\left(1+\frac{r}{\rho(x_0)}\right)^{N\cdot\frac{N_0}{N_0+1}}
\left(1+\frac{2^{k+1}r}{\rho(x_0)}\right)^{-N}\\
&\times\frac{1}{w^q(2^{k+1}B)^{{(1/q-\kappa/p)}}}\left(1+\frac{2^{k+1}r}{\rho(x_0)}\right)^{\theta+\theta'+\theta''+\mu}\\
&=C \big\|f\big\|_{L^{p,\kappa}_{\rho,\theta}(w^p,w^q)}
\left(1+\frac{r}{\rho(x_0)}\right)^{N\cdot\frac{N_0}{N_0+1}}\\
&\times\sum_{k=1}^\infty(k+1)\frac{w^q(B)^{{(1/q-\kappa/p)}}}{w^q(2^{k+1}B)^{{(1/q-\kappa/p)}}}
\left(1+\frac{2^{k+1}r}{\rho(x_0)}\right)^{-N+\theta+\theta'+\theta''+\mu}.
\end{split}
\end{equation*}
Note that $w^q\in A^{\rho,\theta'\cdot\frac{1}{1/q+1/{p'}}}_t$ with $t=1+q/{p'}$. A further application of \eqref{compare} yields
\begin{equation*}
\begin{split}
J_4&\leq C \big\|f\big\|_{L^{p,\kappa}_{\rho,\theta}(w^p,w^q)}
\left(1+\frac{r}{\rho(x_0)}\right)^{N\cdot\frac{N_0}{N_0+1}}\sum_{k=1}^\infty(k+1)\left(\frac{|B|}{|2^{k+1}B|}\right)^{\delta{(1/q-\kappa/p)}}\\
&\quad\times\left(1+\frac{2^{k+1}r}{\rho(x_0)}\right)^{\eta{(1/q-\kappa/p)}}\left(1+\frac{2^{k+1}r}{\rho(x_0)}\right)^{-N+\theta+\theta'+\theta''+\mu}\\
&=C \big\|f\big\|_{L^{p,\kappa}_{\rho,\theta}(w^p,w^q)}
\left(1+\frac{r}{\rho(x_0)}\right)^{N\cdot\frac{N_0}{N_0+1}}\sum_{k=1}^\infty(k+1)\left(\frac{|B|}{|2^{k+1}B|}\right)^{\delta{(1/q-\kappa/p)}}\\
&\quad\times\left(1+\frac{2^{k+1}r}{\rho(x_0)}\right)^{-N+\theta+\theta'+\theta''+\mu+\eta{(1/q-\kappa/p)}}.
\end{split}
\end{equation*}
Hence, combining the above estimates for $J_3$ and $J_4$, we conclude that
\begin{equation*}
\begin{split}
J_2\leq J_3+J_4&\leq C\big\|f\big\|_{L^{p,\kappa}_{\rho,\theta}(w^p,w^q)}
\left(1+\frac{r}{\rho(x_0)}\right)^{\mu+N\cdot\frac{N_0}{N_0+1}}\\
&\times\sum_{k=1}^\infty(k+1)\left(\frac{|B|}{|2^{k+1}B|}\right)^{\delta{(1/q-\kappa/p)}}
\left(1+\frac{2^{k+1}r}{\rho(x_0)}\right)^{-N+\theta+\theta'+\theta''+\mu+\eta{(1/q-\kappa/p)}}.
\end{split}
\end{equation*}
By choosing $N$ large enough so that $N>\theta+\theta'+\theta''+\mu+\eta{(1/q-\kappa/p)}$, we thus have
\begin{equation*}
\begin{split}
J_2&\leq C\left(1+\frac{r}{\rho(x_0)}\right)^{\mu+N\cdot\frac{N_0}{N_0+1}}
\sum_{k=1}^\infty(k+1)\left(\frac{|B|}{|2^{k+1}B|}\right)^{\delta{(1/q-\kappa/p)}}\\
&\leq C\left(1+\frac{r}{\rho(x_0)}\right)^{\mu+N\cdot\frac{N_0}{N_0+1}},
\end{split}
\end{equation*}
where the last series is convergent since $0<\kappa<p/q$. Finally, collecting the above estimates for $J_1,J_2$, and letting $\vartheta=\max\big\{\vartheta',\mu+N\cdot\frac{N_0}{N_0+1}\big\}$, we obtain the desired inequality \eqref{Main3}. The proof of Theorem \ref{mainthm:3} is finished.
\end{proof}

The higher order commutators generated by $\mathrm{BMO}_{\rho,\infty}(\mathbb R^d)$ functions $b$ and the fractional integrals $\mathcal I_{\alpha}$ are usually defined by
\begin{equation*}
\begin{cases}
[b,\mathcal I_{\alpha}]_mf(x):=\displaystyle\int_{\mathbb R^n}\big[b(x)-b(y)\big]^m \mathcal K_{\alpha}(x,y)f(y)\,dy,\quad x\in\mathbb R^d;&\\
\quad 0<\alpha<d,~~~m=1,2,3,\dots.&
\end{cases}
\end{equation*}
Obviously, $[b,\mathcal I_{\alpha}]_1=[b,\mathcal I_{\alpha}]$ which is just the linear commutator \eqref{briesz}, and
\begin{equation*}
[b,\mathcal I_{\alpha}]_m=\big[b,[b,\mathcal I_{\alpha}]_{m-1}\big],\quad m=2,3,\dots.
\end{equation*}
By induction on $m$, we are able to show that the conclusion of Theorem \ref{mainthm:3} also holds for the higher order commutators $[b,\mathcal I_{\alpha}]_m$ with $m\geq2$. The details are omitted here.

\begin{thm}
Let $0<\alpha<d$, $1<p<d/{\alpha}$, $1/q=1/p-{\alpha}/d$ and $w\in A^{\rho,\infty}_{p,q}$. If $V\in RH_s$ with $s\geq d/2$ and $0<\kappa<p/q$, then for any positive integer $m\geq2$, the higher order commutators $[b,\mathcal I_{\alpha}]_m$ are bounded from $L^{p,\kappa}_{\rho,\infty}(w^p,w^q)$ into $L^{q,{(\kappa q)}/p}(w^q)$, whenever $b\in\mathrm{BMO}_{\rho,\infty}(\mathbb R^d)$.
\end{thm}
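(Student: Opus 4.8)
The plan is to follow the proof of Theorem \ref{mainthm:3} almost verbatim, the only genuinely new ingredient being the binomial expansion of $[b(x)-b(y)]^m$. Fix a ball $B=B(x_0,r)$ and, relative to $b_B$, write
\[
[b(x)-b(y)]^m=\sum_{j=0}^{m}\binom{m}{j}\big(b(x)-b_B\big)^{m-j}\big(b_B-b(y)\big)^{j}.
\]
As before, decompose $f=f_1+f_2$ with $f_1=f\chi_{2B}$ and $f_2=f\chi_{(2B)^c}$, and bound $\big(w^q(B)^{-\kappa q/p}\int_B|[b,\mathcal I_\alpha]_mf|^qw^q\big)^{1/q}\le J_1+J_2$. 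For $J_1$ one invokes the $L^p(w^p)\to L^q(w^q)$ boundedness of $[b,\mathcal I_\alpha]_m$ — the order-$m$ analogue of Theorem \ref{cstrong}, obtained by iterating that result (or available in the literature) — and then absorbs $w^q(2B)^{\kappa/p}/w^q(B)^{\kappa/p}$ by the doubling inequality \eqref{doubling1} exactly as in Theorem \ref{mainthm:3}, getting $J_1\le C(1+r/\rho(x_0))^{\vartheta'}$ with $\vartheta'=(q\theta'\kappa)/p+\theta$.

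For the global term, the expansion yields $\big|[b,\mathcal I_\alpha]_mf_2(x)\big|\le\sum_{j=0}^{m}\binom{m}{j}\big|b(x)-b_B\big|^{m-j}\int_{(2B)^c}\big|b_B-b(y)\big|^{j}\big|\mathcal K_\alpha(x,y)f(y)\big|\,dy$, so $J_2\le\sum_{j=0}^m C_m J_2^{(j)}$. For each $j$ the inner kernel integral is estimated exactly as in \eqref{zeta} (via Lemma \ref{kernel}, $|x-y|\approx|x_0-y|$ on $B\times(2B)^c$, and \eqref{com2}), producing a dyadic sum with summand $|2^{k+1}B|^{\alpha/d-1}(1+r/\rho(x_0))^{NN_0/(N_0+1)}(1+2^{k+1}r/\rho(x_0))^{-N}\int_{2^{k+1}B}|b(y)-b_B|^{j}|f(y)|\,dy$. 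The outer factor $\int_B|b(x)-b_B|^{(m-j)q}w^q(x)\,dx$ is controlled by Lemma \ref{BMO1} applied to the weight $w^q$ with exponent $(m-j)q$, which is legitimate since $w^q\in A^{\rho,\infty}_t\subset A^{\rho,\infty}_q\subset A^{\rho,\infty}_{(m-j)q}$ by Lemma \ref{Apq}; this gives $w^q(B)^{1/q}(1+r/\rho(x_0))^{(m-j)\mu}$. For the inner integral one writes $|b(y)-b_B|^j\le C_m\big(|b(y)-b_{2^{k+1}B}|^j+|b_{2^{k+1}B}-b_B|^j\big)$: the second piece is handled by Lemma \ref{BMO2} (contributing $(k+1)^j(1+2^{k+1}r/\rho(x_0))^{j\theta''}$) together with H\"older and the $A^{\rho,\theta'}_{p,q}$ condition as in Theorem \ref{mainthm:1}, while the first piece is treated by H\"older with exponents $p,p'$ and then Lemma \ref{BMO1} for $v=w^{-p'}\in A^{\rho,\infty}_{p'}\subset A^{\rho,\infty}_{jp'}$ with exponent $jp'$, followed by $1/p'+1/q=1-\alpha/d$ and the $A^{\rho,\theta'}_{p,q}$ relation between $(\int_{2^{k+1}B}w^{-p'})^{1/p'}$ and $|2^{k+1}B|^{1-\alpha/d}w^q(2^{k+1}B)^{-1/q}$. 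Altogether this produces the $m$-analogue of \eqref{est2}:
\[
\frac{1}{|2^{k+1}B|^{1-\alpha/d}}\int_{2^{k+1}B}\big|b(y)-b_B\big|^{j}|f(y)|\,dy\le C\|f\|_{L^{p,\kappa}_{\rho,\theta}(w^p,w^q)}(k+1)^{j}\frac{w^q(2^{k+1}B)^{\kappa/p}}{w^q(2^{k+1}B)^{1/q}}\Big(1+\tfrac{2^{k+1}r}{\rho(x_0)}\Big)^{\theta+\theta'+j\theta''+j\mu}.
\]

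Assembling the pieces and applying \eqref{compare} for $w^q\in A^{\rho,\infty}_t$ exactly as in the estimate of $J_4$, each $J_2^{(j)}$ is bounded by $C\|f\|(1+r/\rho(x_0))^{(m-j)\mu+NN_0/(N_0+1)}$ times $\sum_{k\ge1}(k+1)^{j}(|B|/|2^{k+1}B|)^{\delta(1/q-\kappa/p)}(1+2^{k+1}r/\rho(x_0))^{-N+\Lambda_m}$, where $\Lambda_m$ collects the finitely many ($m$-dependent) exponents $\theta,\theta',\theta'',\mu,\eta(1/q-\kappa/p)$. Since $0<\kappa<p/q$ forces $1/q-\kappa/p>0$ and $|B|/|2^{k+1}B|=2^{-d(k+1)}$ decays geometrically, the polynomial factor $(k+1)^j$ is harmless; choosing $N>\Lambda_m$ makes the series converge and leaves only a fixed power of $(1+r/\rho(x_0))$, and taking $\vartheta=\max\{\vartheta',\,m\mu+NN_0/(N_0+1)\}$ gives the analogue of \eqref{Main3} with $[b,\mathcal I_\alpha]$ replaced by $[b,\mathcal I_\alpha]_m$. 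The main obstacle is purely organizational — tracking the $m+1$ binomial terms together with the several $m$-dependent powers of $(1+\cdot/\rho)$ and of $(k+1)$ — which is presumably why the authors omit the details; conceptually nothing beyond the $m=1$ case is needed, once one observes that $w^q\in A^{\rho,\infty}_q$ and $w^{-p'}\in A^{\rho,\infty}_{p'}$ so that Lemma \ref{BMO1} may be applied with the large exponents $(m-j)q$ and $jp'$.
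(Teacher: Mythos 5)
Your proposal is correct, and it is worth noting that the paper does not actually prove this statement: it merely remarks that the result ``follows by induction on $m$'' via the identity $[b,\mathcal I_{\alpha}]_m=[b,[b,\mathcal I_{\alpha}]_{m-1}]$ and omits all details. Your route --- expanding $[b(x)-b(y)]^m$ binomially about $b_B$ and rerunning the $m=1$ argument term by term --- is the standard way to actually carry this out, and it is arguably more robust than the suggested induction, since the inductive step would require kernel/off-diagonal estimates for the operator $[b,\mathcal I_{\alpha}]_{m-1}$ that are not supplied anywhere in the paper. All the individual steps you use check out: Lemma \ref{BMO1} does apply with the large exponents $(m-j)q$ and $jp'$ because $w^q\in A^{\rho,\infty}_t\subset A^{\rho,\infty}_{(m-j)q}$ and $w^{-p'}\in A^{\rho,\infty}_{t'}\subset A^{\rho,\infty}_{jp'}$ (the monotonicity of the classes $A^{\rho,\theta}_{p_1}\subset A^{\rho,\theta}_{p_2}$ is stated in the introduction, not in Lemma \ref{Apq}, but that is a citation quibble); the $(k+1)^j$ growth from iterating Lemma \ref{BMO2} is absorbed by the geometric factor $(|B|/|2^{k+1}B|)^{\delta(1/q-\kappa/p)}$ exactly as you say; and taking $N$ larger than the accumulated ($m$-dependent) exponents closes the dyadic sum. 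The one input you should be explicit about is the local term: the $L^p(w^p)\to L^q(w^q)$ boundedness of $[b,\mathcal I_{\alpha}]_m$ for $m\geq2$ is not literally Theorem \ref{cstrong}, and it cannot be obtained by naively ``iterating'' that theorem (which concerns $[b,\mathcal I_{\alpha}]$ only, not $[b,T]$ for a general $T$); it either needs a citation to the higher-order results in \cite{tang} or \cite{bui}, or a Cauchy-integral/extrapolation argument in the spirit of \cite{bong4}. Since the paper's own one-line ``proof'' implicitly leans on the same external fact, this is a presentational point rather than a gap in your argument.
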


\section{Proof of Theorem \ref{mainthm:5}}

The following lemma plays a key role in the proof of our main theorem, which can be found in \cite[Proposition 8]{bong5} (see also \cite[Lemma 3.7]{tang}).

\begin{lem}[\cite{bong5}]\label{kernel2}
Let $V\in RH_s$ with $s\geq d/2$ and $0<\alpha<d$. For every positive integer $N\geq1$, there exists a positive constant $C_N>0$ such that for all $x$ and $y$ in $\mathbb R^d$, and for some fixed $0<\varepsilon\leq 1$,
\begin{equation*}
\big|\mathcal K_{\alpha}(x,z)-\mathcal K_{\alpha}(y,z)\big|\leq C_N\bigg(1+\frac{|x-z|}{\rho(x)}\bigg)^{-N}\frac{|x-y|^{\varepsilon}}{|x-z|^{d-\alpha+\varepsilon}},
\end{equation*}
whenever $|x-y|\leq |x-z|/2$.
\end{lem}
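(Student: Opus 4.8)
The plan is to work directly from the subordination formula $\mathcal K_\alpha(x,z)=\int_0^\infty p_t(x,z)\,t^{\alpha/2-1}\,dt$, so that
\[
\mathcal K_\alpha(x,z)-\mathcal K_\alpha(y,z)=\int_0^\infty\big[p_t(x,z)-p_t(y,z)\big]\,t^{\alpha/2-1}\,dt,
\]
and to estimate the difference of heat kernels inside the integral. Writing $R=|x-z|$ and $h=|x-y|$, the hypothesis $h\le R/2$ guarantees $|y-z|\ge R/2$, so $x$ and $y$ are comparable in their distance to $z$. First I would record the genuinely analytic input: a H\"older-regularity estimate for the heat kernel of $\mathcal L$, namely that there exist $\delta\in(0,1]$ and, for each $N$, a constant $C_N$ with
\[
\big|p_t(x,z)-p_t(y,z)\big|\le C_N\Big(\tfrac{|x-y|}{\sqrt t}\Big)^{\delta}t^{-d/2}\exp\Big(-\tfrac{|x-z|^2}{Ct}\Big)\Big(1+\tfrac{\sqrt t}{\rho(x)}\Big)^{-N}
\]
whenever $|x-y|\le\sqrt t$, where the factor in $\rho(z)$ has been absorbed into one in $\rho(x)$ by Lemma \ref{N0}. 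This exponent $\delta$ will become the $\varepsilon$ of the statement.

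Next I would promote this into a bound valid for \emph{all} $t>0$. In the range $t\ge h^2$ the displayed estimate applies directly. In the complementary range $t<h^2$ I would instead invoke the pointwise size bound of Proposition \ref{k} on each term $p_t(x,z)$ and $p_t(y,z)$ separately; since $|y-z|\ge R/2$, both Gaussians are dominated by $\exp(-cR^2/t)$, and since $h/\sqrt t>1$ here the harmless factor $(h/\sqrt t)^{\delta}\ge1$ may be inserted at no cost. This yields a single estimate of the form above (with a possibly smaller Gaussian constant $c$) holding for every $t>0$.

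I would then integrate against $t^{\alpha/2-1}$ and split the $t$-integral at $t=R^2$. On the far part $t\ge R^2$ one has $\sqrt t\ge R$, hence $\big(1+\sqrt t/\rho(x)\big)^{-N}\le\big(1+R/\rho(x)\big)^{-N}$; pulling this $t$-independent factor out leaves $\int_{R^2}^\infty t^{(\alpha-\delta-d)/2-1}\,dt$, which converges because $\alpha<d$ and equals a constant multiple of $R^{\alpha-\delta-d}$. On the near part $t<R^2$ I would extract the decay via the elementary comparison $\lambda^N\le C_N e^{c\lambda^2}$ for $\lambda\ge1$: applying it to $\lambda=R/\sqrt t$, and distributing a power onto $\sqrt t/\rho(x)$ when $\sqrt t\ge\rho(x)$, shows that $e^{-cR^2/t}\big(1+\sqrt t/\rho(x)\big)^{-N'}\le C_N\big(1+R/\rho(x)\big)^{-N}e^{-cR^2/(2t)}$ for a suitable $N'\ge N$, after which $\int_0^{R^2}t^{(\alpha-\delta-d)/2-1}e^{-cR^2/(2t)}\,dt$ converges at $t=0$ by the Gaussian and again equals a constant times $R^{\alpha-\delta-d}$. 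Multiplying both contributions by the prefactor $h^{\delta}=|x-y|^{\delta}$ and setting $\varepsilon=\delta$ gives exactly the asserted bound $C_N\big(1+|x-z|/\rho(x)\big)^{-N}\,|x-y|^{\varepsilon}\,|x-z|^{-(d-\alpha+\varepsilon)}$.

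The main obstacle is the heat-kernel H\"older estimate itself: unlike the size bound of Proposition \ref{k}, it is not immediate from the Feynman--Kac domination \eqref{heat}, and its proof requires a perturbative (Duhamel) comparison of $e^{-t\mathcal L}$ with the free heat semigroup, in which the reverse H\"older condition $V\in RH_s$ is used to control the contribution of the potential and to produce the positive regularity exponent $\delta$. This is precisely the input packaged in \cite[Proposition 8]{bong5}. Once it is granted, everything else is the routine $t$-integration sketched above, and no further structure of $V$ beyond Lemma \ref{N0} is needed.
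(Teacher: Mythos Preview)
The paper does not actually give a proof of this lemma: it is stated with a citation to \cite[Proposition~8]{bong5} (and \cite[Lemma~3.7]{tang}) and then used as a black box in the proof of Theorem~\ref{mainthm:5}. So there is no ``paper's own proof'' to compare against.

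That said, your derivation is the standard one and is correct in outline. The subordination formula reduces the question to a H\"older continuity estimate for the Schr\"odinger heat kernel of the form
\[
|p_t(x,z)-p_t(y,z)|\le C_N\Big(\frac{|x-y|}{\sqrt t}\Big)^{\delta}t^{-d/2}e^{-c|x-z|^2/t}\Big(1+\frac{\sqrt t}{\rho(x)}+\frac{\sqrt t}{\rho(z)}\Big)^{-N},\quad |x-y|\le \sqrt t,
\]
which is indeed available in the literature under the hypothesis $V\in RH_s$, $s\ge d/2$ (this is the analytic content of \cite[Proposition~8]{bong5} and related results). Your handling of the complementary range $t<|x-y|^2$ via the size bound of Proposition~\ref{k}, the splitting of the $t$-integral at $t=|x-z|^2$, and the conversion of the $(1+\sqrt t/\rho(x))^{-N}$ decay into $(1+|x-z|/\rho(x))^{-N}$ decay are all routine and carried out correctly. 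Two small comments: first, you do not need Lemma~\ref{N0} to ``absorb'' the $\rho(z)$ factor---since it appears as extra decay you may simply drop it; second, in the near-zone integral your case analysis showing $e^{-cR^2/(2t)}(1+\sqrt t/\rho(x))^{-N'}\lesssim(1+R/\rho(x))^{-N}$ is implicit but should be written out (separating $\sqrt t\gtrless\rho(x)$ and $R\gtrless\rho(x)$), as this is where the freedom to choose $N'$ large relative to $N$ is actually spent.
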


\begin{proof}[Proof of Theorem $\ref{mainthm:5}$]
For an arbitrary ball $B=B(x_0,r)$ in $\mathbb R^d$ and $0<\alpha<d$, it suffices to prove that the following inequality
\begin{equation}\label{end1.1}
\frac{1}{|B|}\int_B\big|\mathcal I_{\alpha}f(x)-(\mathcal I_{\alpha}f)_B\big|\,dx\leq C\cdot\left(1+\frac{r}{\rho(x_0)}\right)^{\vartheta}
\end{equation}
holds for any $f\in L^{p,\kappa}_{\rho,\infty}(w^p,w^q)$ with $1<p<q<\infty$ and $\kappa=p/q$, where $(\mathcal I_{\alpha}f)_B$ denotes the average of $\mathcal I_{\alpha}f$ over $B$. Suppose that $f\in L^{p,\kappa}_{\rho,\theta}(w^p,w^q)$ for some $\theta>0$ and $w\in A^{\rho,\theta'}_{p,q}$ for some $\theta'>0$. Decompose the function $f$ as $f=f_1+f_2$, where $f_1=f\cdot\chi_{4B}$, $f_2=f\cdot\chi_{(4B)^c}$, $4B=B(x_0,4r)$. By the linearity of the $\mathcal L$-fractional integral operator $\mathcal I_{\alpha}$, the left-hand side of \eqref{end1.1} can be divided into two parts. That is,
\begin{equation*}
\begin{split}
&\frac{1}{|B|}\int_B\big|\mathcal I_{\alpha}f(x)-(\mathcal I_{\alpha}f)_B\big|\,dx\\
&\leq\frac{1}{|B|}\int_B\big|\mathcal I_{\alpha}f_1(x)-(\mathcal I_{\alpha}f_1)_B\big|\,dx
+\frac{1}{|B|}\int_B\big|\mathcal I_{\alpha}f_2(x)-(\mathcal I_{\alpha}f_2)_B\big|\,dx\\
&:=K_1+K_2.
\end{split}
\end{equation*}
First let us consider the term $K_1$. Applying the weighted $(L^p,L^q)$-boundedness of $\mathcal I_{\alpha}$ (see Theorem \ref{strong}) and H\"older's inequality, we obtain
\begin{equation*}
\begin{split}
K_1&\leq\frac{2}{|B|}\int_B|\mathcal I_{\alpha}f_1(x)|\,dx\\
&\leq\frac{2}{|B|}\bigg(\int_B|\mathcal I_{\alpha}f_1(x)|^qw^q(x)\,dx\bigg)^{1/q}\bigg(\int_B w(x)^{-q'}dx\bigg)^{1/{q'}}\\
&\leq\frac{C}{|B|}\bigg(\int_{4B}|f(x)|^pw^p(x)\,dx\bigg)^{1/p}\bigg(\int_B w(x)^{-q'}dx\bigg)^{1/{q'}}\\
&\leq C\big\|f\big\|_{L^{p,\kappa}_{\rho,\theta}(w^p,w^q)}
\cdot\frac{w^q(4B)^{{\kappa}/p}}{|B|}\bigg(\int_B w(x)^{-q'}dx\bigg)^{1/{q'}}\left(1+\frac{4r}{\rho(x_0)}\right)^{\theta}.
\end{split}
\end{equation*}
Since $q'<p'$, by H\"older's inequality, it is easy to check that
\begin{equation*}
\bigg(\frac{1}{|B|}\int_B w(x)^{-q'}\,dx\bigg)^{1/{q'}}\leq\bigg(\frac{1}{|B|}\int_B w(x)^{-p'}dx\bigg)^{1/{p'}}.
\end{equation*}
Moreover, since $w$ is a weight in the class $A^{\rho,\theta'}_{p,q}$, one has
\begin{equation*}
\begin{split}
&\left(\frac1{|B|}\int_B w(x)^q\,dx\right)^{1/q}\bigg(\frac{1}{|B|}\int_B w(x)^{-q'}dx\bigg)^{1/{q'}}\\
&\leq\bigg(\frac{1}{|B|}\int_B w(x)^q\,dx\bigg)^{1/q}\bigg(\frac{1}{|B|}\int_B w(x)^{-{p'}}\,dx\bigg)^{1/{p'}}
\leq C\cdot\left(1+\frac{r}{\rho(x_0)}\right)^{\theta'},
\end{split}
\end{equation*}
which implies
\begin{equation}\label{end1.2}
\bigg(\int_B w(x)^{-q'}dx\bigg)^{1/{q'}}\leq C\cdot\left(1+\frac{r}{\rho(x_0)}\right)^{\theta'}\frac{|B|}{w^q(B)^{1/q}}.
\end{equation}
Also observe that $w^q\in A^{\rho,\theta'\cdot\frac{1}{1/q+1/{p'}}}_t$ with $t=1+q/{p'}$. Using the inequalities \eqref{doubling1} and \eqref{end1.2}, and noting the fact that $\kappa=p/q$, we have
\begin{equation*}
\begin{split}
K_1&\leq C\big\|f\big\|_{L^{p,\kappa}_{\rho,\theta}(w^p,w^q)}\cdot\frac{w^q(4B)^{1/q}}{w^q(B)^{1/q}}
\left(1+\frac{4r}{\rho(x_0)}\right)^{\theta}\left(1+\frac{r}{\rho(x_0)}\right)^{\theta'}\\
&\leq C\big\|f\big\|_{L^{p,\kappa}_{\rho,\theta}(w^p,w^q)}\left(1+\frac{4r}{\rho(x_0)}\right)^{2\theta'}
\left(1+\frac{4r}{\rho(x_0)}\right)^{\theta}\left(1+\frac{r}{\rho(x_0)}\right)^{\theta'}\\
&\leq C\cdot\left(1+\frac{r}{\rho(x_0)}\right)^{\vartheta'},
\end{split}
\end{equation*}
where $\vartheta':={3\theta'}+\theta$. Now we turn to estimate $K_2$. For any $x\in B(x_0,r)$,
\begin{equation*}
\begin{split}
\big|\mathcal I_{\alpha}f_2(x)-(\mathcal I_{\alpha}f_2)_B\big|
&=\bigg|\frac{1}{|B|}\int_B\big[\mathcal I_{\alpha}f_2(x)-\mathcal I_{\alpha}f_2(y)\big]\,dy\bigg|\\
&=\bigg|\frac{1}{|B|}\int_B\bigg\{\int_{(4B)^c}\Big[\mathcal K_{\alpha}(x,z)-\mathcal K_{\alpha}(y,z)\Big]f(z)\,dz\bigg\}dy\bigg|\\
&\leq\frac{1}{|B|}\int_B\bigg\{\int_{(4B)^c}\big|\mathcal K_{\alpha}(x,z)-\mathcal K_{\alpha}(y,z)\big|\cdot|f(z)|\,dz\bigg\}dy.
\end{split}
\end{equation*}
Next note that by a purely geometric argument, one has
\begin{equation*}
|x-y|\leq |x-z|/2 \quad \& \quad |x-z|\approx |x_0-z|,
\end{equation*}
whenever $x,y\in B$ and $z\in(4B)^c$. This fact along with Lemma \ref{kernel2} yields
\begin{align}\label{average}
\big|\mathcal I_{\alpha}f_2(x)-(\mathcal I_{\alpha}f_2)_B\big|
&\leq\frac{C_N}{|B|}\int_B\bigg\{\int_{(4B)^c}\bigg(1+\frac{|x-z|}{\rho(x)}\bigg)^{-N}
\frac{|x-y|^{\varepsilon}}{|x-z|^{d-\alpha+\varepsilon}}\cdot|f(z)|\,dz\bigg\}dy\notag\\
&\leq C_{N,d}\int_{(4B)^c}\bigg(1+\frac{|x_0-z|}{\rho(x)}\bigg)^{-N}\frac{r^{\varepsilon}}{|x_0-z|^{d-\alpha+\varepsilon}}\cdot|f(z)|\,dz\notag\\
&=C_{N,d}\sum_{k=2}^\infty\int_{2^{k+1}B\backslash 2^k B}
\bigg(1+\frac{|x_0-z|}{\rho(x)}\bigg)^{-N}\frac{r^{\varepsilon}}{|x_0-z|^{d-\alpha+\varepsilon}}\cdot|f(z)|\,dz\notag\\
&\leq C_{N,d}\sum_{k=2}^\infty\frac{1}{2^{k\varepsilon}}\cdot\frac{1}{|2^{k+1}B|^{1-({\alpha}/d)}}
\int_{2^{k+1}B\backslash 2^k B}\bigg(1+\frac{2^kr}{\rho(x)}\bigg)^{-N}|f(z)|\,dz.
\end{align}
Furthermore, by using H\"older's inequality, \eqref{com2} and $A^{\rho,\theta'}_{p,q}$ condition on $w$, we get that for any $x\in B(x_0,r)$,
\begin{align}\label{end1.3}
\big|\mathcal I_{\alpha}f_2(x)-(\mathcal I_{\alpha}f_2)_B\big|
&\leq C\sum_{k=2}^\infty\frac{1}{2^{k\varepsilon}}\cdot\frac{1}{|2^{k+1}B|^{1-({\alpha}/d)}}
\left(1+\frac{r}{\rho(x_0)}\right)^{N\cdot\frac{N_0}{N_0+1}}
\left(1+\frac{2^{k+1}r}{\rho(x_0)}\right)^{-N}\notag\\
&\times\bigg(\int_{2^{k+1}B}\big|f(z)\big|^pw^p(z)\,dz\bigg)^{1/p}
\left(\int_{2^{k+1}B}w(z)^{-p'}dz\right)^{1/{p'}}\notag\\
&\leq C\big\|f\big\|_{L^{p,\kappa}_{\rho,\theta}(w^p,w^q)}
\sum_{k=2}^\infty\frac{1}{2^{k\varepsilon}}\cdot\left(1+\frac{r}{\rho(x_0)}\right)^{N\cdot\frac{N_0}{N_0+1}}
\left(1+\frac{2^{k+1}r}{\rho(x_0)}\right)^{-N}\notag\\
&\times\frac{w^q(2^{k+1}B)^{{\kappa}/p}}{w^q(2^{k+1}B)^{1/q}}\left(1+\frac{2^{k+1}r}{\rho(x_0)}\right)^{\theta}
\left(1+\frac{2^{k+1}r}{\rho(x_0)}\right)^{\theta'}\notag\\
&=C\big\|f\big\|_{L^{p,\kappa}_{\rho,\theta}(w^p,w^q)}
\sum_{k=2}^\infty\frac{1}{2^{k\varepsilon}}\cdot\left(1+\frac{r}{\rho(x_0)}\right)^{N\cdot\frac{N_0}{N_0+1}}
\left(1+\frac{2^{k+1}r}{\rho(x_0)}\right)^{-N+\theta+\theta'},
\end{align}
where the last equality is due to the assumption $\kappa=p/q$. From the pointwise estimate \eqref{end1.3}, it readily follows that
\begin{equation*}
\begin{split}
K_2&=\frac{1}{|B|}\int_B\big|\mathcal I_{\alpha}f_2(x)-(\mathcal I_{\alpha}f_2)_B\big|\,dx\\
&\leq C\big\|f\big\|_{L^{p,\kappa}_{\rho,\theta}(w^p,w^q)}
\sum_{k=2}^\infty\frac{1}{2^{k\varepsilon}}\cdot\left(1+\frac{r}{\rho(x_0)}\right)^{N\cdot\frac{N_0}{N_0+1}}
\left(1+\frac{2^{k+1}r}{\rho(x_0)}\right)^{-N+\theta+\theta'}.
\end{split}
\end{equation*}
Now $N$ can be chosen sufficiently large so that $N>\theta+\theta'$, and hence the above series is convergent. Therefore,
\begin{equation*}
K_2\leq C\left(1+\frac{r}{\rho(x_0)}\right)^{N\cdot\frac{N_0}{N_0+1}}\sum_{k=2}^\infty\frac{1}{2^{k\varepsilon}}\leq C\left(1+\frac{r}{\rho(x_0)}\right)^{N\cdot\frac{N_0}{N_0+1}}.
\end{equation*}
Fix this $N$ and set $\vartheta=\max\big\{\vartheta',N\cdot\frac{N_0}{N_0+1}\big\}$. Finally, combining the above estimates for $K_1$ and $K_2$, the inequality \eqref{end1.1} is proved and then the proof of Theorem \ref{mainthm:5} is finished.
\end{proof}

\end{document}